\numberwithin{equation}{section}
\DeclareRobustCommand*{\bfseries}{%
  \not@math@alphabet\bfseries\mathbf
  \fontseries\bfdefault\selectfont
  \boldmath
}
\DeclareMathOperator*{\limin}{\underline{\lim}}
\DeclareMathOperator*{\limsu}{\overline{\lim}}
\DeclareMathOperator{\ssetminus}{\!\setminus\!}
\def\lparen{(}
\newcommand{(}{\ifmmode\left\lparen\else\lparen\fi}
\def\rparen{)}
\newcommand{)}{\ifmmode\right\rparen\else\rparen\fi}
\def\wto{\rightharpoonup}
\def\RR{\mathbb{R}}
\DeclareMathOperator{\Gammap}{\Gamma\!({\it L^p})}
\def\mesabs{{\mathfrak{A}}_{\lambda}}
\def\MM{\mathbb{M}}
\def\NN{\mathbb{N}}
\def\Mm{V}
\def\I{\mathcal I}
\def\Q{\mathrm Q}
\def\ssup{\!>\!}
\def\sinf{\!<\!}
\def\M{\mathit{m}}
\def\ZZ{\mathbb Z}
\def\D{\mathfrak{D}}
\def\diam{{\rm diam}}
\def\dom{{\rm dom}}
\def\FF{\mathcal{F}}
\def\V{\mathcal V}
\def\K{\mathcal K}
\def\eps{\varepsilon}
\def\O{\mathcal O}
\def\Ds{\overline{D}}
\def\Di{\underline{D}}
\def\Bal{\mathcal Q}
\def\S{\mathcal S}
\def\f{L}
\def\ftild{\widetilde{\f}}
\DeclareMathAlphabet{\mathpzc}{OT1}{pzc}{m}{it}
\newtheorem{theorem}{Theorem}[section]
\newtheorem{lemma}{Lemma}[section]
\newtheorem{proposition}{Proposition}[section]
\newtheorem{corollary}{Corollary}[section]
\newtheorem{definition}{Definition}[section]
\renewcommand{\qedsymbol}{\ensuremath{\blacksquare}}
\renewcommand{\liminf}{\limin}
\renewcommand{\limsup}{\limsu}
\theoremstyle{example}
\newtheorem{example}{Example}[section]
\theoremstyle{remark}
\newtheorem{remark}{Remark}[section]
\newlist{hyp}{enumerate}{1}
\setlist[hyp,1]{label={\rm (${\rm H}_{\arabic*}$)}}
\newlist{hypp}{enumerate}{1}
\setlist[hypp,1]{label={\rm (${\rm H}_{\arabic*}^\prime$)}}
\newlist{hypg}{enumerate}{1}
\setlist[hypg,1]{label={\rm (${\rm B}_{\arabic*}$)}}
\newlist{hypc}{enumerate}{1}
\setlist[hypc,1]{label={\rm (${\rm C}_{\arabic*}$)}}
\newlist{hyps}{enumerate}{1}
\setlist[hyps,1]{label={\rm (${\rm S}_{\arabic*}$)}}
\newlist{hypv}{enumerate}{1}
\setlist[hypv,1]{label={\rm (${\mathscr V}_{\arabic*}$)}}
\newlist{hypl}{enumerate}{1}
\setlist[hypl,1]{label={\rm (${\mathscr L}_{\arabic*}$)}}
\newlist{hyph}{enumerate}{1}
\setlist[hyph,1]{label={\rm (${\mathrm{H}}_{\arabic*}$)}}
\newlist{hyphp}{enumerate}{1}
\setlist[hyphp,1]{label={\rm (${\mathrm{H}}_{\arabic*}^\prime$)}}
\newlist{hypG}{enumerate}{1}
\setlist[hypG,1]{label={\rm (${\mathrm P}_{\arabic*}$)}}
\newlist{hypcprime}{enumerate}{1}
\setlist[hypcprime,1]{label={\rm (${\rm C}_{\arabic*}^\prime$)}}
\newlist{hypgr}{enumerate}{1}
\setlist[hypgr,1]{label={\rm (${\rm D}_{\arabic*}$)}}
\newlist{hyp1}{enumerate}{1}
\setlist[hyp1,1]{label={\rm ($\alph*$)}}
\newlist{hyp2}{enumerate}{1}
\setlist[hyp2,1]{label={\rm ($\roman*$)}}
\numberwithin{equation}{section}
\title[$\Gamma$-limits determined by their infima]{$\Gamma$-limits of functionals determined by their infima}
\author[Omar Anza Hafsa]{Omar Anza Hafsa}
\author[Jean Philippe Mandallena]{Jean Philippe Mandallena}
\address{UNIVERSITE de NIMES, Laboratoire MIPA, Site des Carmes, Place Gabriel P\'eri, 30021 N\^\i mes, France}
\address{Laboratoire LMGC, UMR-CNRS 5508, Place Eug\`ene Bataillon, 34095 Montpellier, France.}
\email{Omar Anza Hafsa <omar.anza-hafsa@unimes.fr>}
\email{Jean-Philippe Mandallena <jean-philippe.mandallena@unimes.fr>}
\dedicatory{In memoriam Jean-Jacques Moreau}
\keywords{$\Gamma$-convergence, integral representation, relaxation, homogenization}
\begin{document}
\begin{abstract} We study the integral representation of $\Gamma$-limits of $p$-coercive integral functionals of the calculus of variations in the spirit of Dal maso and Modica (1986). We use infima of local Dirichlet problems to characterize the limit integrands. Applications to homogenization and relaxation are given.
\end{abstract}
\maketitle
\section{Introduction}
Let $m,d\ge 1$ be two integers. Let $\Omega\subset\RR^d$ be a nonempty bounded open set with Lipschitz boundary. Let $\O(\Omega)$ be the class of all open subsets of $\Omega$. We consider a family of functionals $\FF:=\{F_\eps\}_{\eps\in ]0,1]}$ with $F_\eps: W^{1,p}(\Omega;\RR^m)\times\O(\Omega)\to [0,\infty]$. We set conditions in order that each functional of the family $\FF$ can be considered as a $p$-coercive integral functional of the calculus of variations (see the ``global" conditions~\ref{C0},~\ref{C1},~\ref{C2} in Sect.~\ref{s1}). We are interested in the integral representation of the $\Gammap$-limit of $\FF$. This is an important problem in the field of $\Gamma$-convergence theory (see for instance \cite{degiorgi79}). 

Our goal is to study the conditions of the integral representation of $\Gammap$-limit by using the infima of local Dirichlet problems associated to $\FF$ as in  \cite{dalmaso-modica86, bouchitte-fonseca-mascarenhas98, BFLM02}. More precisely, we consider the behavior of
\begin{align*}
\M_\eps(u;O):=\inf\left\{F_\eps(v;O):v\in u+W^{1,p}_0(O;\RR^m)\right\}
\end{align*}
in order to find the conditions for the integral representation (see also \cite{dalmaso-modica86-2, dalmaso-modica86-3, modica86}). We propose three ``local" conditions (see~\ref{H0},~\ref{H1} and~\ref{H2} in Sect.~\ref{s1}) related to the local behavior of $\M_\eps$ which allows to prove $\Gammap$-convergence of the family $\FF(\cdot;O)$ with integral representation of the $\Gammap$-limit $\FF_0(\cdot;O)$
\begin{align*}
\FF_0(u;O)=\int_O \f_0(x,u(x),\nabla u(x))dx
\end{align*}
where $u\in {\rm M}_\FF(O)$ (see Definition~\eqref{Msets} for ${\rm M}_\FF(O)$) and 
\begin{align*}
\f_0(x,u(x),\nabla u(x))=\limsup_{\rho\to 0}\limsup_{\eps\to 0}\frac{\M_\eps(u_x;\Q_\rho(x))}{\rho^d}\quad\mbox{ with }u_x(\cdot)=u(x)+\nabla u(x)(\cdot-x).
\end{align*}
The main difficulty is to obtain an upper bound under integral form for the $\Gammap$-$\limsup$. More precisely, we show, in Sect.~\ref{Vitali-envelope} together with Sect.~\ref{Proof-main-theorem}, that the Vitali envelope (which is an envelope of Carath\'eodory type where the arbitrary coverings are replaced by Vitali coverings) $V_+(u;\cdot)$ of the set function $\O(\Omega)\ni V\mapsto \limsup_{\eps\to0}\M_\eps(u;V)$ when $u\in {\rm M}_\FF(O)$ satisfies 
\begin{align*}
\Gammap\mbox{-}\limsup_{\eps\to 0}F_\eps(u;O)\le V_+(u;O)=\int_O\!\lim_{\rho\to 0}\inf\!\left\{\limsup_{\eps\to 0}\frac{\M_\eps(u;\Q)}{\lambda(\Q)}:x\in \Q\in \Bal_o(O)\!,\diam(\Q)\le \!\rho\right\}\!dx.
\end{align*}
The Vitali envelope of a set function in connection with the integral representation of $\Gammap$-limits was introduced in~\cite{bouchitte-fonseca-mascarenhas98} (see also~\cite{bellieud-bouchitte00}). This path has the advantage to avoid any approximations of Sobolev functions by regular ones. It allows, when we assume $p$-growth conditions, to give general results for $\Gammap$-limit and in particular to give a general point of view in homogenization and relaxation problems for Borel measurable integrands $\f(x,v,\xi)$ (see Sect.~\ref{Integral representation with $p$-growth conditions}).

\medskip

Plan of the paper. Sect.~\ref{s1} presents the main assumptions (``global and local" conditions) and the statement of the general results (see Theorem~\ref{main-result-eps} and Theorem~\ref{H-theorem}). Theorem~\ref{H-theorem} is an integral representation result of $\Gammap$-limit, it is a consequence of local conditions (\ref{H0},~\ref{H1} and~\ref{H2}) and Theorem~\ref{main-result-eps}. In Sect.~\ref{Vitali-envelope} we state and prove an integral representation for the Vitali envelope of arbitrary nonnegative set functions. In Sect.~\ref{Proof-main-theorem} we give the proof of Theorem~\ref{main-result-eps} and some other related results. Finally in Sect.~\ref{Integral representation with $p$-growth conditions} we give a general $\Gammap$-convergence result in the $p$-growth case Theorem~\ref{p-growth-theorem}, which can be seen as an extension in a nonconvex (an vectorial) case of Theorem IV in \cite[p. 265]{dalmaso-modica86}. In fact, we show how to verify the local conditions ~\ref{H1} and~\ref{H2} when we deal with $p$-growth, the technics we use are inspired by~\cite{bouchitte-fonseca-mascarenhas98}. In Subsect.~\ref{sub-homog} as an application of Theorem~\ref{p-growth-theorem} we consider a general point of view of the homogenization of functional integral of the calculus of variations. In Subsect.~\ref{subsection-relaxation} we give an extension of the Acerbi-Fusco-Dacorogna relaxation theorem when the integrand is assumed Borel measurable only.

\section{Main results}\label{s1}
\subsection{General framework}
Fix $\alpha\ssup 0$ and $p\in]1,\infty[$. We denote by ${\mathcal I}(p,\alpha)$ the set of functionals $F: W^{1,p}(\Omega;\RR^m)\times\O(\Omega)\to [0,\infty]$ satisfying:\\

\begin{hypc}
\item\label{C0} for every $O\in \O(\Omega)$ and every $u\in\dom F(\cdot;O)$ we have
\begin{align*}
F(u;O)\ge \alpha\Vert \nabla u\Vert_{L^p(O;\RR^m)}^p;
\end{align*}

\medskip

\item\label{C1} for every $u\in \dom F(\cdot;\Omega)$ the set function $F(u;\cdot)$ is the trace on $\O(\Omega)$ of a Borel measure absolutely continuous with respect to the Lebesgue measure $\lambda$ on $\Omega$;\\

\item\label{C2} for every $O\in\O(\Omega)$ the functional $F(\cdot;O)$ is local, i.e., if $u=v$ a.e. in $O$ then $F(u;O)=F(v;O)$ for all $u,v\in \dom F(\cdot;O)$.
\end{hypc}

\bigskip

Consider a family $\FF:=\{F_\eps\}_{\eps\in ]0,1]}$ of functionals $F_\eps: W^{1,p}(\Omega;\RR^m)\times\O(\Omega)\to [0,\infty]$.
For each $O\in \O(\Omega)$ and each $u\in L^p(\Omega;\RR^m)$ we set
\begin{align*}
\FF_-(u;O)&:=\inf\left\{\liminf_{\eps\to 0} F_\eps(u_\eps;O):u_\eps\to u \mbox{ in }L^p(\Omega;\RR^m)\right\};\\
\FF_+(u;O)&:=\inf\left\{\limsup_{\eps\to 0} F_\eps(u_\eps;O):u_\eps\to u \mbox{ in }L^p(\Omega;\RR^m)\right\}.
\end{align*}
The functional $\FF_-(\cdot;O)$ (resp. $\FF_+(\cdot;O)$) is the $\Gammap$-$\liminf_{\eps\to0}$ (resp. $\Gammap$-$\limsup_{\eps\to0}$) of the family $\FF(\cdot;O)=\{F_\eps(\cdot;O)\}_\eps$ . If $u\in W^{1,p}(\Omega;\RR^m)$ and $\FF_+(u;O)=\FF_-(u;O)$ then we say that $\FF(\cdot;O)$ $\Gammap$-converges at $u$ to the $\Gammap$-limit $\FF_0(u;O):=\FF_+(u;O)=\FF_-(u;O)$.

\bigskip

We associate to $\FF=\{F_\eps\}_{\eps\in ]0,1]}\subset\I(p,\alpha)$ a family of local Dirichlet problems $\{\M_\eps\}_{\eps\in ]0,1]},\;\;\M_\eps: W^{1,p}(\Omega;\RR^m)\times\O(\Omega)\to [0,\infty]$ defined by
\begin{align*}
\M_{\eps}(u;O):=\inf\big\{F_\eps(v;O):W^{1,p}(\Omega;\RR^m)\ni v=u\text{ in }\Omega\setminus O\big\}.
\end{align*}
Note that we can write
\begin{align*}
\M_{\eps}(u;O)=\inf\big\{F_\eps(v;O): v\in u+W^{1,p}_0(O;\RR^m)\big\}
\end{align*}
since $u+W^{1,p}_0(O;\RR^m)=\left\{v\in W^{1,p}(\Omega;\RR^m): v-u=0 \text{ in }\Omega\setminus O\right\}$ (see \cite[p. 234, Theorem 9.1.3]{adams-hedberg}).
\begin{remark} The functional $\M_\eps(\cdot;O)$ can be seen as the ``quotient functional" $\widetilde{F_\eps}(\cdot;O)$ defined on the quotient space of $W^{1,p}(\Omega;\RR^m)$ by $W^{1,p}_0(O;\RR^m)$, i.e.,
\begin{align*}
\widetilde{F_\eps}(\cdot;O)&:\displaystyle\sfrac{\displaystyle W^{1,p}(\Omega;\RR^m)}{\displaystyle W^{1,p}_0(O;\RR^m)}\to [0,\infty]\\ \\
\mbox{ with }\widetilde{F_\eps}([u];O)&:=\inf_{v\in [u]}F_\eps(v;O)=\M_\eps(u;O)
\end{align*}
where $[u]=u+W^{1,p}_0(O;\RR^m)$ is the equivalent class of $u$.
\end{remark}
\subsection{A general $\Gammap$-convergence theorem}
We denote by $\lambda$ the Lebesgue measure on $\Omega$. For each $O\in\O(\Omega)$ we denote by $\mesabs(O)$ the space of nonnegative {\em finite} Borel measures on $O$ which are absolutely continuous with respect to the Lebesgue measure $\lambda\lfloor_O$ on $O$. 

\medskip

Let us introduce the {\em {\rm M}-sets associated to $\FF$}: for each $O\in \O(\Omega)$ we set
\begin{align}\label{Msets}
{\rm M}_\FF(O):=
\bigg\{
u\in W^{1,p}(\Omega;\RR^m):\exists \mu_u\in \mesabs(O)\quad
\sup_{\eps>0}\M_\eps(u;\cdot)\le\mu_u(\cdot)\text{ on }O
\bigg\}.
\end{align}

\begin{itemize}
\item We assume that all the affine maps, i.e., functions of the form $u(x)=v+\zeta x$ with $(x,v,\zeta)\in\Omega\times\RR^m\times\MM^{m\times d}$, belong to ${\rm M}_\FF(O)$.
\end{itemize}
We will see in Theorem~\ref{H-theorem} that the {\rm M}-set is the set where an integral representation of the $\Gammap$-limit is possible.

\bigskip

To the family $\FF$ we associate $\M_+:W^{1,p}(\Omega;\RR^m)\times\O(\Omega)\to [0,\infty]$ defined by
\begin{align*}
\M_+(u;O):=\limsup_{\eps\to 0}\M_\eps(u;O).
\end{align*}

The following result provides bounds in integral forms of both $\Gammap$-$\liminf_{\eps\to0}$ and $\Gammap$-$\limsup_{\eps\to0}$ of a family $\FF=\{F_\eps\}_{\eps\in ]0,1]}\subset {\mathcal I}(p,\alpha)$, i.e., satisfying~\ref{C0},~\ref{C1} and~\ref{C2}.
\begin{theorem}\label{main-result-eps} Let $\FF=\{F_\eps\}_{\eps\in ]0,1]}\subset {\mathcal I}(p,\alpha)$ and let $(u,O)\in W^{1,p}(\Omega;\RR^m)\times\O(\Omega)$. 
\begin{hyp2}
\item\label{upper bound} If $u\in{\rm M}_\FF(O)$ then
\begin{align*}
\FF_+(u;O)\le\FF_+^\D(u;O)\le\int_O\lim_{\rho\to 0}\inf\left\{\frac{\M_{+}(u;\Q)}{\lambda(\Q)}: x\in \Q\in \Bal_o(O),\;\diam(\Q)\le \rho\right\}dx
\end{align*}
where 
\[
\FF_+^\D(u;O):=\inf\left\{\limsup_{\eps\to 0}F_\eps(v_\eps;O):W^{1,p}_0(O;\RR^m)+u\ni v_\eps\to u\mbox{ in }L^p(\Omega;\RR^m)\right\};
\]
\item\label{Lower Bound} There exists $\{u_{\eps_n}\}_{n}\subset W^{1,p}(\Omega;\RR^m)$ with $\sup_n F_{\eps_n}(u_{\eps_n};O)\sinf\infty$ such that $u_{\eps_n}\to u$ in $L^p(\Omega;\RR^m)$ as $n\to \infty$ and
\begin{align*}
\FF_-^\D(u;O)\ge\FF_-(u;O)\ge \int_O\lim_{\rho\to 0} \lim_{n\to \infty}\frac{F_{\eps_n}(u_{\eps_n};\Q_\rho(x))}{\rho^d}dx
\end{align*}
where 
\[
\FF_-^\D(u;O):=\inf\left\{\liminf_{\eps\to 0}F_\eps(v_\eps;O):W^{1,p}_0(O;\RR^m)+u\ni v_\eps\to u\mbox{ in }L^p(\Omega;\RR^m)\right\}.
\]
\end{hyp2}
\end{theorem}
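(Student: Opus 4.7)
The two inner comparisons $\FF_+(u;O)\le\FF_+^\D(u;O)$ and $\FF_-^\D(u;O)\ge\FF_-(u;O)$ are immediate, since the approximating sequences allowed for $\FF_\pm^\D$ are a subclass of those allowed for $\FF_\pm$ (the constraint $v_\eps-u\in W^{1,p}_0(O;\RR^m)$ is imposed). So the real content lies in the outer integral estimates.

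For the upper bound in~\ref{upper bound}, the plan is to use the Vitali envelope machinery of Section~\ref{Vitali-envelope}. Since $u\in {\rm M}_\FF(O)$, the set function $\M_+(u;\cdot)$ is dominated on $O$ by a measure $\mu_u\in\mesabs(O)$, so its Vitali envelope is a finite absolutely continuous measure whose density coincides with the $\lim_\rho\inf$ expression in the statement. It remains to prove $\FF_+^\D(u;O)\le V_+(u;O)$ by an explicit construction. Fix $\delta>0$ and $\rho>0$ small. Using a Vitali cover, select a finite pairwise disjoint family $\{\Q_k\}_{k=1}^N\subset\Bal_o(O)$ with $\diam(\Q_k)\le\rho$, with $\lambda(O\setminus\bigcup_k\Q_k)<\delta$, and along which the ratios $\M_+(u;\Q_k)/\lambda(\Q_k)$ approximate the local infima. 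On each $\Q_k$ pick a near-minimizer $v_k^\eps\in u+W^{1,p}_0(\Q_k;\RR^m)$ with $F_\eps(v_k^\eps;\Q_k)\le\M_\eps(u;\Q_k)+\delta/N$, and glue to form $v^\eps$ equal to $v_k^\eps$ on $\Q_k$ and to $u$ on $O\setminus\bigcup_k\Q_k$. By~\ref{C0} together with the M-set bound the $\nabla v_k^\eps$ are uniformly $L^p$-bounded, so Poincar\'e on each cube gives $\|v^\eps-u\|_{L^p(O)}\le C\rho$ uniformly in $\eps$; diagonalizing $\rho\to 0$ with $\eps\to 0$ yields the required $L^p$-convergence. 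The measure property~\ref{C1} plus absolute continuity of $\mu_u$ lets the remainder $O\setminus\bigcup_k\Q_k$ be absorbed into $o_\delta(1)$, while finiteness of the sum permits exchanging the $\limsup_\eps$ with $\sum_k$, yielding $\limsup_\eps F_\eps(v^\eps;O)\le V_+(u;O)+o_\delta(1)$.

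For the lower bound in~\ref{Lower Bound}, the plan is measure-theoretic differentiation. Assuming $\FF_-(u;O)<\infty$, pick $u_{\eps_n}\to u$ in $L^p$ with $\lim_n F_{\eps_n}(u_{\eps_n};O)=\FF_-(u;O)$; by~\ref{C0}, $\sup_n\|\nabla u_{\eps_n}\|_{L^p(O)}<\infty$. By~\ref{C1}, each $\nu_n:=F_{\eps_n}(u_{\eps_n};\cdot)$ extends to a nonnegative Radon measure on $\Omega$ with uniformly bounded mass on $O$; extracting a further subsequence, $\nu_n\swto\mu$ for some nonnegative Radon measure $\mu$. Writing $\mu=g\,\lambda+\mu^s$ with $\mu^s\perp\lambda$, the Lebesgue--Besicovitch differentiation theorem gives $g(x)=\lim_{\rho\to 0}\mu(\Q_\rho(x))/\rho^d$ for a.e.\ $x\in O$. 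For a.e.\ $x$ and for $\mu$-continuity radii $\rho$ (which form a co-countable set), Portmanteau yields $\mu(\Q_\rho(x))=\lim_n\nu_n(\Q_\rho(x))=\lim_n F_{\eps_n}(u_{\eps_n};\Q_\rho(x))$. Combining,
\[
\FF_-(u;O)=\lim_n \nu_n(O)\ge\mu(O)\ge\int_O g(x)\,dx=\int_O\lim_{\rho\to 0}\lim_{n\to\infty}\frac{F_{\eps_n}(u_{\eps_n};\Q_\rho(x))}{\rho^d}\,dx.
\]

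The main obstacle I expect is the gluing step for the upper bound, where three things must be controlled simultaneously: (a) the glued sequence $v^\eps$ actually converges to $u$ in $L^p(\Omega;\RR^m)$, (b) the total energy $\sum_k F_\eps(v_k^\eps;\Q_k)+F_\eps(u;O\setminus\bigcup_k\Q_k)$ approaches $V_+(u;O)$ under the double limit $\eps\to 0$, $\rho\to 0$, and (c) no interface energy accumulates at the common boundaries of adjacent cubes. This is precisely the point at which the M-set hypothesis (to bound $\sup_\eps\M_\eps(u;\cdot)$ by a finite measure $\mu_u$), the absolute continuity of $\mu_u$ relative to $\lambda$ (to dispose of the uncovered remainder), and the integral representation of the Vitali envelope from Section~\ref{Vitali-envelope} (to identify the limit) are all indispensable.
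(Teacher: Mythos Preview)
Your lower bound argument for~\ref{Lower Bound} is essentially identical to the paper's: pick an almost-optimal subsequence realizing $\FF_-(u;O)$, view $F_{\eps_n}(u_{\eps_n};\cdot)$ as measures via~\ref{C1}, extract a weak-$\ast$ limit $\mu$, Lebesgue-decompose, and differentiate.

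For the upper bound~\ref{upper bound}, your strategy matches the paper's in spirit---glue near-minimizers on a Vitali cover, control the $L^p$ distance via Poincar\'e and~\ref{C0}, diagonalize---but there is one genuine gap. You take a \emph{finite} Vitali family $\{\Q_k\}_{k=1}^N$ leaving a remainder $R=O\setminus\bigcup_k\Q_k$ of positive measure, set $v^\eps=u$ on $R$, and then claim $F_\eps(u;R)$ is $o_\delta(1)$ ``by~\ref{C1} plus absolute continuity of $\mu_u$''. But $\mu_u$ dominates $\M_\eps(u;\cdot)$, not $F_\eps(u;\cdot)$; nothing in the hypotheses bounds $F_\eps(u;R)$, which may well be $+\infty$ (indeed $u$ need not lie in $\dom F_\eps(\cdot;O)$ at all).

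The paper resolves this by taking a \emph{countable} Vitali family $\{\Q_i\}_{i\in I}\in\V^\eps(O)$ with $\lambda(O\setminus\bigcup_i\Q_i)=0$; then by~\ref{C1} the remainder contributes zero energy outright. The price is that one can no longer trivially push $\limsup_\delta$ through the now-infinite sum $\sum_{i\in I}\M_\delta(u;\Q_i)$. This is where $\mu_u$ actually enters: for any $\eta>0$ choose a finite $I_\eta\subset I$ with $\mu_u(O\setminus\bigcup_{i\in I_\eta}\Q_i)\le\eta$; the tail satisfies $\sum_{i\notin I_\eta}\M_\delta(u;\Q_i)\le\eta$ uniformly in $\delta$, so the $\limsup$ can be taken termwise on the finite part. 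Thus in the paper's argument the M-set bound controls the tail of a countable sum, not $F_\eps(u;\cdot)$ on an uncovered region.

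An alternative fix that stays within your finite-cover framework: on the open remainder $O\setminus\bigcup_k\overline{\Q}_k$, replace $u$ by a near-minimizer $w^\eps\in u+W^{1,p}_0(O\setminus\bigcup_k\overline{\Q}_k;\RR^m)$ for $\M_\eps$; then the remainder energy is bounded by $\mu_u(O\setminus\bigcup_k\Q_k)+\text{(small)}$, which is genuinely $o_\delta(1)$, and the boundary traces still match since every piece equals $u$ on the cube boundaries.
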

\begin{remark}\label{domain-M} If $u\in {\rm M}_\FF(O)$ then there exists $\mu_u\in \mesabs(O)$ such that $
\sup_{\eps>0}\M_\eps(u;\cdot)\le\mu_u(\cdot)\text{ on }O$. Therefore we have
\begin{align*}
\M_+(u;\cdot)\le\mu_u(\cdot)\text{ on }O.
\end{align*}
Taking account of Theorem~\ref{main-result-eps}~\ref{upper bound} we deduce that $\FF_+(u;O)\sinf\infty$, which means that
\begin{align*}
{\rm M}_\FF(O)\subset\dom\FF_+^\D(\cdot;O):=\left\{u\in W^{1,p}(\Omega;\RR^m):\FF_+^\D(u;O)\sinf\infty\right\}\subset \dom\FF_+(\cdot;O).
\end{align*}
\end{remark}

\bigskip

To the family $\FF=\{F_\eps\}_{\eps\in ]0,1]}$ we associate $\M_-:W^{1,p}(\Omega;\RR^m)\times\O(\Omega)\to [0,\infty]$ defined by
\begin{align*}
\M_-(u;O):=\liminf_{\eps\to 0}\M_\eps(u;O).
\end{align*}

\medskip

Let $O\in\O(\Omega)$ and let $u\in W^{1,p}(\Omega;\RR^m)$. We denote the affine tangent map of $u$ at $x\in O$ by
\[
u_x(\cdot):=u(x)+\nabla u(x)(\cdot-x).
\]

Consider the following local inequalities for $u\in {\rm M}_\FF(O)$:

\bigskip

\begin{hyph}
\item\label{H0} $\displaystyle\liminf_{\rho\to 0}\frac{\M_-(u_x;\Q_\rho(x))}{\rho^d} \ge \limsup_{\rho\to 0}\frac{\M_+(u_x;\Q_\rho(x))}{\rho^d}\quad\text{ a.e. in }O$;\\ \\

\item\label{H1} $\displaystyle\limsup_{\rho\to 0}\liminf_{\eps\to 0}\frac{F_\eps(u_\eps;\Q_\rho(x))}{\rho^d} \ge \liminf_{\rho\to 0}\frac{\M_-(u_x;\Q_\rho(x))}{\rho^d}\quad\text{ a.e. in }O\;$ for all $\{u_\eps\}_\eps\subset W^{1,p}(\Omega;\RR^m)$ such that $u_\eps\to u$ in $L^p(\Omega;\RR^m)$ and $\sup_\eps F_\eps(u_\eps;O)<\infty$;\\ \\

\item\label{H2} $\displaystyle\limsup_{\rho\to 0}\frac{\M_+(u_x;\Q_\rho(x))}{\rho^d} \ge \liminf_{\rho\to 0}\frac{\M_+(u;\Q_\rho(x))}{\rho^d}\quad\text{ a.e. in }O.$
\end{hyph}

\medskip

\begin{remark}\label{remarks-conditions-local} We make some remarks on the previous inequalities.
\begin{hyp2}
\item Condition~\ref{H0}, related to the integral representation of the $\Gammap$-limit of functionals of the calculus of variations, is already known when $p$-polynomial growth (and convexity conditions) is assumed see \cite[p. 451]{dalmaso2006}. 
 \item The condition~\ref{H1} (resp.~\ref{H2}) can be seen as a ``local" $\Gammap$-$\liminf$ (resp. $\Gammap$-$\limsup$)  inequality. To verify inequality~\ref{H2} (resp.~\ref{H1}) we need to replace $u$ (resp. a sequence $\{u_\eps\}_\eps$ converging in $L^p$ to $u$ and satisfying $\sup_\eps F_\eps(u_\eps;O)<\infty$) by the affine tangent map $u_x$ in the localization of $\M_\eps$ on ``small" cubes $\Q_\rho(x)$. This can be performed, for instance, by using growth conditions see Sect.~\ref{Integral representation with $p$-growth conditions}.
\end{hyp2}
\end{remark}

\medskip

The following lemma is used in the proof of Theorem~\ref{H-theorem} and its proof is given in Sect.~\ref{Proof-main-theorem}.
\begin{lemma}\label{lemma-H-theorem} Let $\FF=\{F_\eps\}_{\eps\in ]0,1]}\subset\I(p,\alpha)$. Let $O\in\O(\Omega)$ and let $u\in {\rm M}_\FF(O)$. If~\ref{H0},~\ref{H1} and~\ref{H2} hold then 
the function $O\ni x\mapsto\limsup_{\rho\to 0}\frac{\M_+(u_x;\Q_\rho(x))}{\rho^d}$ is measurable and satisfies
\begin{align*}
\limsup_{\rho\to 0}\frac{\M_+(u_x;\Q_\rho(x))}{\rho^d}=\liminf_{\rho\to 0}\frac{\M_-(u_x;\Q_\rho(x))}{\rho^d}\quad\mbox{ a.e. in }O.
\end{align*}
\end{lemma}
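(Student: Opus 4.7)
The statement splits into an a.e.\ pointwise equality and a measurability assertion, which I treat in turn.

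For the equality, for every open $V\subset O$ the trivial $\liminf_\eps\le\limsup_\eps$ gives $\M_-(u_x;V)\le \M_+(u_x;V)$. Specializing to $V=\Q_\rho(x)$, dividing by $\rho^d$, and passing to $\liminf_\rho$ on the left and $\limsup_\rho$ on the right yields
\[
\liminf_{\rho\to 0}\frac{\M_-(u_x;\Q_\rho(x))}{\rho^d}\;\le\;\limsup_{\rho\to 0}\frac{\M_+(u_x;\Q_\rho(x))}{\rho^d};
\]
the reverse inequality is precisely hypothesis~\ref{H0}, so the two densities coincide a.e.\ in $O$.

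For measurability, write $\theta(x):=\limsup_{\rho\to 0}\M_+(u_x;\Q_\rho(x))/\rho^d$ and
\[
\Phi(x):=\liminf_{\rho\to 0}\frac{\M_+(u;\Q_\rho(x))}{\rho^d}.
\]
I plan to show $\theta=\Phi$ a.e., and then use that $\Phi$ is measurable. The function $\Phi$ is the lower Vitali-type derivative of the set function $V\mapsto\M_+(u;V)$, which, by $u\in{\rm M}_\FF(O)$, is bounded above by the finite absolutely continuous measure $\mu_u\in\mesabs(O)$; the Vitali envelope theory developed in Section~\ref{Vitali-envelope} then yields that $\Phi$ is Borel measurable and finite a.e.

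The inequality $\theta\ge\Phi$ a.e.\ is exactly hypothesis~\ref{H2}. For the reverse, I plan to apply~\ref{H1} to a recovery sequence $\{u_\eps\}\subset W^{1,p}(\Omega;\RR^m)$ with $u_\eps\to u$ in $L^p$, $\sup_\eps F_\eps(u_\eps;O)<\infty$, and the a.e.\ local bound $\liminf_\eps F_\eps(u_\eps;\Q_\rho(x))\le \M_+(u;\Q_\rho(x))$ for a.e.\ $x$ and all small $\rho$. Such a sequence is obtained by gluing approximate local minimizers of $\M_\eps(u;\cdot)$ on small subcubes with $u$ on the complement, using the absolute continuity~\ref{C1} to keep the pasting cost negligible; then~\ref{H1} yields $\limsup_\rho\liminf_\eps F_\eps(u_\eps;\Q_\rho(x))/\rho^d\ge \theta(x)$ a.e., and combining with the local bound produces $\theta\le\Phi$ a.e. The main obstacle is precisely this construction: synthesizing global $L^p$-convergence, uniform energy bound, and an a.e.\ local upper bound simultaneously, which is where the Vitali envelope machinery of Section~\ref{Vitali-envelope} together with the absolute continuity of the Borel measure $F_\eps(u_\eps;\cdot)$ from~\ref{C1} do the essential work.
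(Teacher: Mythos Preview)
Your treatment of the a.e.\ equality is correct and in fact cleaner than the paper's: the inequality $\M_-\le\M_+$ plus~\ref{H0} settles it immediately.

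The measurability argument, however, has a genuine gap. You aim to show $\theta=\Phi$ a.e.\ with $\Phi(x)=\liminf_{\rho\to 0}\M_+(u;\Q_\rho(x))/\rho^d$, but your chain for $\theta\le\Phi$ does not close. Grant the local bound $\liminf_\eps F_\eps(u_\eps;\Q_\rho(x))\le \M_+(u;\Q_\rho(x))$; combining it with~\ref{H1} gives only
\[
\theta(x)\;\le\;\limsup_{\rho\to 0}\liminf_{\eps\to 0}\frac{F_\eps(u_\eps;\Q_\rho(x))}{\rho^d}\;\le\;\limsup_{\rho\to 0}\frac{\M_+(u;\Q_\rho(x))}{\rho^d},
\]
which is the $\limsup_\rho$, not $\Phi=\liminf_\rho$. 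Since $\M_+(u;\cdot)$ is merely a set function (not a measure), there is no reason its centered $\liminf$ and $\limsup$ densities coincide, so you are left with $\Phi\le\theta\le\limsup_\rho\M_+(u)/\rho^d$ and cannot conclude. Moreover, the gluing construction you sketch is exactly the one behind Theorem~\ref{main-result-eps}\ref{upper bound}: it fixes one Vitali partition of $O$ and yields only a \emph{global} bound $\limsup_\eps F_\eps(u_\eps;O)\le V_{\M_+}(u;O)$; it does not produce a single sequence satisfying your local bound simultaneously for all small cubes at a.e.\ point. A secondary issue is that Lemma~\ref{measurability} gives measurability of $\Di_\lambda G$ (the lower derivative over \emph{all} cubes containing $x$), not of the centered-cube quantity $\Phi$; for an arbitrary set function the latter need not be measurable.

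The paper closes the loop differently. It invokes Theorem~\ref{main-result-eps}: part~\ref{upper bound} applied on each $\Q_\rho(x)$ gives $\FF_+^\D(u;\Q_\rho(x))\le\int_{\Q_\rho(x)}\Di_\lambda\M_+(u;\cdot)\,dy$, hence after localization $\limsup_\rho\FF_+^\D/\rho^d\le\Di_\lambda\M_+(u;\cdot)(x)$ a.e.; part~\ref{Lower Bound} supplies the concrete sequence $\{u_{\eps_n}\}$ to which~\ref{H1} is applied. Stringing together
\[
\limsup_\rho\frac{\FF_+}{\rho^d}\le\Di_\lambda\M_+(u;\cdot)\le\Phi\stackrel{\ref{H2}}{\le}\theta\stackrel{\ref{H0}}{\le}\liminf_\rho\frac{\M_-(u_x;\cdot)}{\rho^d}\stackrel{\ref{H1}}{\le}\cdots\le\liminf_\rho\frac{\FF_-}{\rho^d}
\]
and then using the trivial $\FF_-\le\FF_+$ forces every term to coincide, so $\theta=\Di_\lambda\M_+(u;\cdot)$ a.e., and the latter is measurable by Lemma~\ref{measurability}. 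The essential missing idea in your plan is this passage through $\FF_\pm$ (equivalently, through Theorem~\ref{main-result-eps}) to make the chain of inequalities circular.
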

Here is the general $\Gammap$-convergence theorem which shows that under the local inequalities~\ref{H0},~\ref{H1} and~\ref{H2} the family $\FF(\cdot;O)$ $\Gammap$-converges to an integral functional of the calculus of variations at every $u\in {\rm M}_\FF(O)$. In Sect.~\ref{Integral representation with $p$-growth conditions} we give applications to homogenization and relaxation of this result. When $\FF_+^\D=\FF_-^\D$ we denote by $\FF_0^\D=\FF_+^\D=\FF_-^\D$ the common value.
\begin{theorem}\label{H-theorem} Let $\FF=\{F_\eps\}_{\eps\in ]0,1]}\subset\I(p,\alpha)$. Let $O\in\O(\Omega)$ and let $u\in {\rm M}_\FF(O)$. If~\ref{H0},~\ref{H1} and~\ref{H2} hold then the family of functionals $\FF(\cdot;O)$ $\Gammap$-converges at $u$ to 
\begin{align}\label{formula-int-representation}
\FF_0(u;O)=\FF_0^\D(u;O)=\int_O\limsup_{\rho\to 0}\frac{\M_+(u_x;\Q_\rho(x))}{\rho^d}dx.
\end{align}
Moreover, we have for almost all $x\in O$
\begin{align}\label{formula-int-representation-integrands}
\limsup_{\rho\to 0}\frac{\M_+(u_x;\Q_\rho(x))}{\rho^d}=\liminf_{\rho\to 0}\frac{\M_-(u_x;\Q_\rho(x))}{\rho^d}.
\end{align}
\end{theorem}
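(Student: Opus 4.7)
The plan is to sandwich $\FF_+(u;O)$ and $\FF_-(u;O)$ between the two bounds delivered by Theorem~\ref{main-result-eps} and then, using the three local inequalities together with Lemma~\ref{lemma-H-theorem}, identify both bounds with $\int_O\limsup_{\rho\to 0}\M_+(u_x;\Q_\rho(x))/\rho^d\,dx$. The pointwise identity~\eqref{formula-int-representation-integrands} will then be immediate from Lemma~\ref{lemma-H-theorem}, so the real task is to force the four functionals $\FF_-(u;O)$, $\FF_-^\D(u;O)$, $\FF_+(u;O)$ and $\FF_+^\D(u;O)$ to agree with this integral.

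For the upper bound I would start from Theorem~\ref{main-result-eps}~\ref{upper bound}, which gives $\FF_+^\D(u;O)\le\int_O g(x)\,dx$ with
\[
g(x):=\lim_{\rho\to 0}\inf\left\{\frac{\M_+(u;\Q)}{\lambda(\Q)}:x\in\Q\in\Bal_o(O),\ \diam(\Q)\le\rho\right\}.
\]
Concentric cubes of arbitrarily small edge length centered at $x$ are admissible competitors in this infimum, so $g(x)\le\liminf_{\rho\to 0}\M_+(u;\Q_\rho(x))/\rho^d$ for every $x\in O$. Condition~\ref{H2} then yields $g(x)\le\limsup_{\rho\to 0}\M_+(u_x;\Q_\rho(x))/\rho^d$ a.e.\ in $O$, and integration delivers the desired upper bound on $\FF_+^\D(u;O)$.

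For the lower bound I would invoke Theorem~\ref{main-result-eps}~\ref{Lower Bound} to produce a sequence $\{u_{\eps_n}\}$ converging to $u$ in $L^p$ with $\sup_n F_{\eps_n}(u_{\eps_n};O)<\infty$ and $\FF_-(u;O)\ge\int_O h(x)\,dx$, where $h(x):=\lim_{\rho\to 0}\lim_{n\to\infty}F_{\eps_n}(u_{\eps_n};\Q_\rho(x))/\rho^d$. Applying~\ref{H1} to this sequence gives $h(x)\ge\liminf_{\rho\to 0}\M_-(u_x;\Q_\rho(x))/\rho^d$ a.e.\ in $O$, and Lemma~\ref{lemma-H-theorem} identifies the right-hand side with $\limsup_{\rho\to 0}\M_+(u_x;\Q_\rho(x))/\rho^d$ a.e. Integrating and combining with the trivial chains $\FF_-(u;O)\le\FF_-^\D(u;O)\le\FF_+^\D(u;O)$ and $\FF_-(u;O)\le\FF_+(u;O)\le\FF_+^\D(u;O)$ pinches the four functionals together and establishes~\eqref{formula-int-representation}.

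The main obstacle I anticipate is a compatibility issue in the lower bound: condition~\ref{H1} is stated for families indexed by $\eps\in\,]0,1]$, whereas Theorem~\ref{main-result-eps}~\ref{Lower Bound} only yields a subsequence $\{u_{\eps_n}\}$; one must either extend it to an admissible $\eps$-family preserving the energy bound and the $L^p$ convergence, or check that the argument underlying~\ref{H1} carries over verbatim to subsequences. A lesser technicality is the matching between the Vitali-type infimum in $g(x)$ and the one-parameter family $\Q_\rho(x)$ appearing in~\ref{H2}, which is resolved by restricting the infimum to concentric cubes of small edge length.
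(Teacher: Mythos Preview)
Your proposal is correct and follows essentially the same route as the paper: invoke Theorem~\ref{main-result-eps}\ref{upper bound} for the upper bound, Theorem~\ref{main-result-eps}\ref{Lower Bound} for the lower bound, bridge via~\ref{H2} and~\ref{H1} respectively, and appeal to Lemma~\ref{lemma-H-theorem} for the pointwise identity~\eqref{formula-int-representation-integrands}. The paper likewise applies~\ref{H1} directly to the subsequence $\{u_{\eps_n}\}$ produced by Theorem~\ref{main-result-eps}\ref{Lower Bound} without further comment, so the concern you flag about subsequences versus full $\eps$-families is a shared (minor) ambiguity in the framework rather than a gap specific to your argument.
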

\begin{proof} Let $O\in\O(\Omega)$ and let $u\in {\rm M}_\FF(O)$. From Theorem~\ref{main-result-eps}~\ref{Lower Bound}, there exists $\{u_{\eps}\}_{\eps}\subset W^{1,p}(\Omega;\RR^m)$ with $\sup_\eps F_{\eps}(u_{\eps};O)\sinf\infty$ such that $u_{\eps}\to u$ in $L^p(\Omega;\RR^m)$ as $\eps\to 0$ and
\begin{align*}
\FF_-^\D(u;O)\ge\FF_-(u;O)\ge \int_O\lim_{\rho\to 0} \lim_{\eps\to 0}\frac{F_{\eps}(u_{\eps};\Q_\rho(x))}{\rho^d}dx
\end{align*}
Using the local inequalities~\ref{H0},~\ref{H1} and~\ref{H2}, Lemma~\ref{lemma-H-theorem} together with Theorem~\ref{main-result-eps} we have
\begin{align*}
\FF_+(u;O)\le\FF_+^\D(u;O)&\le \int_O\lim_{\rho\to 0}\inf\left\{\frac{\M_{+}(u;\Q)}{\lambda(\Q)}: x\in \Q\in \Bal_o(O),\;\diam(\Q)\le \rho\right\}dx\\
&\le\int_O\limsup_{\rho\to 0}\frac{\M_{+}(u;\Q_\rho(x))}{\rho^d}dx\\
&=\int_O\liminf_{\rho\to 0}\frac{\M_-(u_x;\Q_\rho(x))}{\rho^d}dx\\
&\le \int_O\lim_{\rho\to 0}\lim_{\eps\to 0}\frac{F_{\eps}(u_{\eps};\Q_\rho(x))}{\rho^d}dx\\
&\le \FF_-(u;O)\le\FF_-^\D(u;O).
\end{align*}
Thus~\eqref{formula-int-representation} holds. The equality~\eqref{formula-int-representation-integrands} is a consequence of Lemma~\ref{lemma-H-theorem}. \mbox{\qedhere}
\end{proof}

\subsection{The relaxation case}
We examine the particular case of a constant family with respect to the parameter $\FF=\{F_\eps=F\}_\eps\subset \I(p,\alpha)$. We set for every $(u,O)\in W^{1,p}(\Omega;\RR^m)\times\O(\Omega)$
\begin{align*}
\FF_0(u;O)&:=\inf\left\{\liminf_{\eps\to 0}F(v_\eps;O):W^{1,p}(\Omega;\RR^m)\ni v_\eps\to u\mbox{ in }L^p(\Omega;\RR^m)\right\};\\
\FF^\D_0(u;O)&:=\inf\left\{\liminf_{\eps\to 0}F(v_\eps;O):W^{1,p}_0(O;\RR^m)+u\ni v_\eps\to u\mbox{ in }L^p(\Omega;\RR^m)\right\};\\
\M(u;O)&:=\inf\left\{F(v;O):v\in u+W^{1,p}_0(O;\RR^m)\right\}.
\end{align*}
The following abstract relaxation result is a direct consequence of Theorem~\ref{H-theorem}.
\begin{proposition}\label{abstract-constant-dirichlet} Let $O\in\O(\Omega)$ and let $u\in {\rm M}_\FF(O)$. If~\ref{H0},~\ref{H1} and~\ref{H2} hold then  
\begin{align}\label{equality-fd}
\FF_0(u;O)=\FF^\D_0(u;O)=\int_O\lim_{\rho\to 0}\frac{\M(u_x;\Q_\rho(x))}{\rho^d}dx.
\end{align}
\end{proposition}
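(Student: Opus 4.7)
The plan is to read off Proposition~\ref{abstract-constant-dirichlet} as a direct specialization of Theorem~\ref{H-theorem} to the case $F_\eps\equiv F$, using three observations.

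First, since the family is constant in $\eps$, the associated Dirichlet infima satisfy $\M_\eps(u;O)=\M(u;O)$ for every $\eps$, so both $\M_+(u;\cdot)$ and $\M_-(u;\cdot)$ collapse to $\M(u;\cdot)$. In particular, the two functions $\rho\mapsto \M_+(u_x;\Q_\rho(x))/\rho^d$ and $\rho\mapsto \M_-(u_x;\Q_\rho(x))/\rho^d$ appearing in~\eqref{formula-int-representation-integrands} are identical, hence Lemma~\ref{lemma-H-theorem} yields that for almost every $x\in O$ the single function $\rho\mapsto \M(u_x;\Q_\rho(x))/\rho^d$ has equal upper and lower limits, i.e.\ admits a genuine limit as $\rho\to 0$. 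This is what justifies replacing the $\limsup_\rho$ of Theorem~\ref{H-theorem} by the $\lim_\rho$ appearing in~\eqref{equality-fd}.

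Second, I would identify the relaxation functionals. For a constant $\Gamma$-sequence it is elementary that the $\Gammap$-liminf and the $\Gammap$-limsup both coincide with the $L^p$-lower semicontinuous envelope of $F(\cdot;O)$. The inequality $\FF_-(u;O)\le \FF_+(u;O)$ is trivial; for the reverse, given $\delta>0$ and a sequence $\{u_\eps\}$ with $\liminf_\eps F(u_\eps;O)\le \FF_-(u;O)+\delta$, extract a subsequence $\{u_{\eps_k}\}$ along which the limit exists and reindex by setting $v_\eps:=u_{\eps_k}$ for $\eps\in(\eps_{k+1},\eps_k]$; this yields $v_\eps\to u$ in $L^p$ with $\limsup_\eps F(v_\eps;O)\le \FF_-(u;O)+\delta$, whence $\FF_+(u;O)\le\FF_-(u;O)$. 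The same diagonalization applies to the Dirichlet variants. Consequently, the $\FF_0$ and $\FF_0^\D$ defined in the proposition coincide with the common values $\FF_-=\FF_+$ and $\FF_-^\D=\FF_+^\D$ of the general framework.

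Third, I would invoke Theorem~\ref{H-theorem}, which under~\ref{H0},~\ref{H1},~\ref{H2} gives
\[
\FF_+(u;O)=\FF_-(u;O)=\FF_+^\D(u;O)=\FF_-^\D(u;O)=\int_O\limsup_{\rho\to 0}\frac{\M_+(u_x;\Q_\rho(x))}{\rho^d}\,dx.
\]
Substituting $\M_+=\M$ and converting $\limsup_{\rho\to 0}$ into $\lim_{\rho\to 0}$ via the first step produces exactly~\eqref{equality-fd}. I do not anticipate a real obstacle: Theorem~\ref{H-theorem} carries essentially the entire content, and the only non-formal ingredient is the standard reindexing argument establishing $\FF_-=\FF_+$ (and its Dirichlet analogue) for a constant family.
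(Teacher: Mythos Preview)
Your proposal is correct and follows the paper's approach: the paper simply says the proposition is a direct consequence of Theorem~\ref{H-theorem}, and your argument unpacks exactly that specialization. One remark: your second step (the reindexing to show $\FF_+=\FF_-$ and $\FF_+^\D=\FF_-^\D$) is unnecessary, since Theorem~\ref{H-theorem} already delivers all four equalities $\FF_+=\FF_-=\FF_+^\D=\FF_-^\D$; it suffices to observe that the $\FF_0$ and $\FF_0^\D$ defined in the relaxation subsection are, by definition, the general $\FF_-$ and $\FF_-^\D$ applied to the constant family.
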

\begin{remark} In particular~\eqref{equality-fd} holds for all $u\in \dom F(\cdot;O)$ since it is easy to see that $\dom F(\cdot;O)\subset {\rm M}_\FF(O)$.
\end{remark}
\subsection{Remarks on the limit integrand} We assume that the assumptions of Theorem~\ref{H-theorem} hold. We give descriptions of the limit integrand $\f_0$ by considering some particular cases.
\begin{hyp2}
\item If we define $\ftild_0:\Omega\times\RR^d\times\MM^{m\times d}\to [0,\infty]$ by
\begin{align}\label{generic-formula-tilde}
\ftild_0(x,v,\xi):=\limsup_{\rho\to 0}\frac{\M_+(v+\xi(\cdot-x);\Q_\rho(x))}{\rho^d},
\end{align}
and for each $u\in {\rm M}_\FF(O)$
\begin{align}\label{generic-formula}
\f_0(x,u(x),\nabla u(x)):=\limsup_{\rho\to 0}\frac{\M_+(u_x;\Q_\rho(x))}{\rho^d}
\end{align}
then the formula~\eqref{formula-int-representation} becomes
\begin{align*}
\FF_0(u;O)=\int_O \ftild_0(x,u(x),\nabla u(x))dx.
\end{align*}
Indeed, we have for every $x\in O$
\begin{align}\label{l=ltild}
\f_0(x,u(x),\nabla u(x))=\ftild_0(x,u(x),\nabla u(x)).
\end{align}
In fact, we do not know whether the integrand $\ftild_0$ is Borel measurable. Because of the equality~\eqref{l=ltild}, the function $O\ni x\mapsto \ftild_0(x,u(x),\nabla u(x))$ is measurable for all $u\in{\rm M}_\FF(O)$.

\bigskip

\item Assume that $\{F_\eps\}_\eps=\FF$ is given under integral form, i.e.,  $F_\eps:W^{1,p}(\Omega;\RR^m)\times\O(\Omega)\to [0,\infty]$ is defined by
\begin{align*}
F_\eps(u;O):=\int_O L_\eps(x,u(x),\nabla u(x))dx
\end{align*}
where $\f_\eps:\Omega\times\RR^d\times\MM^{m\times d}\to [0,\infty]$ is Borel measurable for all $\eps\in ]0,1]$. Then
\begin{align}\label{formula-L0-eps}
&\ftild_0(x,v,\xi)=\\
&\limsup_{\rho\to 0}\limsup_{\eps\to 0}\inf\left\{\!\!\fint_{\Q_\rho(x)}\!\!\!\!\f_\eps(y,v\!+\!\xi(y-x)\!+\!\varphi(y),\xi\!+\!\nabla\varphi(y)\!)\!dy:\varphi\in W^{1,p}_0(\Q_\rho(x)\!;\!\RR^m)\!\!\right\}\!\!.\notag
\end{align}
If, moreover, we assume that $\f_\eps$ does not depend of the variable $v$, i.e., $\f_\eps:\Omega\times\MM^{m\times d}\to [0,\infty]$ then~\eqref{l=ltild} becomes 
\begin{align*}
\f_0(x,\nabla u(x))=\ftild_0(x,\nabla u(x))
\end{align*}
for all $x\in O$ and all $u\in {\rm M}_\FF(O)$. Since the affine functions belong to ${\rm M}_\FF(O)$ we deduce that for every $x\in O$ and every $\xi\in\MM^{m\times d}$
\begin{align*}
\f_0(x,\xi)=\ftild_0(x,\xi).
\end{align*}

\item Now, we consider the case where $\{F_\eps=F\}_\eps=\FF$ is constant with respect to $\eps$ and $F:W^{1,p}(\Omega;\RR^m)\times\O(\Omega)\to [0,\infty]$ is defined by
\begin{align*}
F(u;O):=\int_O L(x,u(x),\nabla u(x))dx
\end{align*}
where $L:\Omega\times\RR^d\times\MM^{m\times d}\to [0,\infty]$ is Borel measurable. If we define for every $(x,v,\xi)\in\Omega\times\RR^d\times\MM^{m\times d}$
\begin{align}\label{formula-L0}
&\ftild_0(x,v,\xi)=\\
&\limsup_{\rho\to 0}\inf\left\{\fint_{\Q_\rho(x)}\f(y,v\!+\!\xi(y-x)\!+\!\varphi(y),\xi\!+\!\nabla\varphi(y)\!)\!dy:\varphi\in W^{1,p}_0(\Q_\rho(x);\RR^m)\right\}.\notag
\end{align}
then for every $u\in {\rm M}_\FF(O)$ and every $x\in O$
\begin{align*}
\f_0(x,u(x),\nabla u(x))=\ftild_0(x,u(x),\nabla u(x)).
\end{align*}
We will show in Proposition~\ref{Reduction formula for Q_pf} that when $L$ is Carath\'eodory with $p$-growth and $p$-coercivity we recover the classical quasiconvex envelope~\cite[Theorem 9.8, p. 432]{dacorogna08}.
\end{hyp2}
\section{Integral representation of Vitali envelope and derivation of set functions}\label{Vitali-envelope}
\subsection{Integral representation of Vitali envelope of set functions}
For a given open set $O\subset \Omega$ we denote by $\Bal_o(O)$ the set of all open cube of $O$. We denote by ${\mathcal Q}_c(O)$ the set of all closed cube of $O$. 

Let $G:\Bal_o(\Omega)\to ]-\infty,\infty]$ be a set function. We define the {\em Vitali envelope} of $G$ with respect to $\lambda$
\begin{align*}
\O(\Omega)\ni O\mapsto V_G(O):=\sup_{\eps>0}\inf\left\{ \sum_{i\in I}G(\Q_i):\left\{\overline{\Q}_i\right\}_{i\in I}\in \V^\eps(O)\right\}
\end{align*}
where for any $\eps\ssup 0$
\begin{align*}
\V^\eps(O):=
\left\{
\left\{\overline{\Q}_i\right\}_{i\in I}\subset\Bal_c(\Omega):I\text{ is countable, }\lambda(O\setminus \mathop{\cup}_{i\in I}{\Q}_i)=0,\right.\;\overline{\Q}_i\subset O&
\\
\diam(\Q_i)\in ]0,\eps[\;\text{ and }\;\overline{\Q}_i\cap \overline{\Q}_j=\emptyset\;\text{ for all }\;i\not=&j\bigg\}.
\end{align*} 
\begin{remark}\label{trivial-vitali}
If $G$ is the trace on $\Bal_o(\Omega)$ of a positive Borel measure $\nu$ on $\Omega$ which is absolutely continuous with respect to $\lambda$ then $V_G(O)=\nu(O)$ for all $O\in\O(\Omega)$.
\end{remark}
Let $G:\Bal_o(\Omega)\to ]-\infty,\infty]$ be a set function. 
Define the upper and the lower derivatives at $x\in\Omega$ of $G$ with respect to $\lambda$ as follows
\begin{align*}
\Di_\lambda G(x)&:=\lim_{\rho\to 0}\inf\left\{\frac{G(\Q)}{\lambda(\Q)}:x\in \Q\in \Bal_o(\Omega),\;\diam(\Q)\le \rho\right\};\\
\Ds_\lambda G(x)&:=\lim_{\rho\to 0}\sup\left\{\frac{G(\Q)}{\lambda(\Q)}:x\in \Q\in \Bal_o(\Omega),\;\diam(\Q)\le \rho\right\}.\\
\end{align*}
We say that $G$ is $\lambda$-differentiable in $O$ if for $\lambda$-almost all $x\in O$ it holds 
\begin{align*}
-\infty\sinf\Di_\lambda G(x)=\Ds_\lambda G(x)\sinf\infty.
\end{align*}  
\begin{remark} For every $O\in\O(\Omega)$ and every $x\in O$ we have
\begin{align*}
\Di_\lambda G(x)&:=\lim_{\rho\to 0}\inf\left\{\frac{G(\Q)}{\lambda(\Q)}:x\in \Q\in \Bal_o(O),\;\diam(\Q)\le \rho\right\};\\
\Ds_\lambda G(x)&:=\lim_{\rho\to 0}\sup\left\{\frac{G(\Q)}{\lambda(\Q)}:x\in \Q\in \Bal_o(O),\;\diam(\Q)\le \rho\right\}.\\
\end{align*}
\end{remark}
The proof of the following classical result can be found in Appendix.
\begin{lemma}\label{measurability} The functions $\Di_\lambda G(\cdot)$ and $\Ds_\lambda G(\cdot)$ are $\lambda$-measurable.
\end{lemma}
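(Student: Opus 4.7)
The plan is to recognize both $\Ds_\lambda G$ and $\Di_\lambda G$ as monotone limits of auxiliary sup/inf functions that are semicontinuous for purely topological reasons. For each $\rho>0$, set
\begin{align*}
I_\rho(x)&:=\inf\left\{\frac{G(\Q)}{\lambda(\Q)}:x\in\Q\in\Bal_o(\Omega),\;\diam(\Q)\le\rho\right\},\\
S_\rho(x)&:=\sup\left\{\frac{G(\Q)}{\lambda(\Q)}:x\in\Q\in\Bal_o(\Omega),\;\diam(\Q)\le\rho\right\}.
\end{align*}
Since the family of admissible cubes at $x$ shrinks as $\rho$ decreases, $\rho\mapsto I_\rho(x)$ is nonincreasing and $\rho\mapsto S_\rho(x)$ is nondecreasing in $\rho$. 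Consequently
\[
\Di_\lambda G(x)=\lim_{\rho\to 0}I_\rho(x)=\sup_{n\in\NN}I_{1/n}(x),\qquad \Ds_\lambda G(x)=\lim_{\rho\to 0}S_\rho(x)=\inf_{n\in\NN}S_{1/n}(x),
\]
so it is enough to check that $I_\rho$ and $S_\rho$ are Borel measurable for every fixed $\rho>0$.

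The key observation is that $S_\rho$ is lower semicontinuous and $I_\rho$ is upper semicontinuous. Indeed, for any $t\in\RR$,
\[
\{x\in\Omega:S_\rho(x)>t\}=\bigcup\left\{\Q\in\Bal_o(\Omega):\diam(\Q)\le\rho,\;\frac{G(\Q)}{\lambda(\Q)}>t\right\},
\]
because $x$ belongs to the left-hand side precisely when it lies in some admissible open cube $\Q$ with $G(\Q)/\lambda(\Q)>t$. The right-hand side is a union of open cubes, hence open. A symmetric argument yields openness of $\{I_\rho<t\}$, which gives the two semicontinuity claims.

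Borel measurability of $I_\rho$ and $S_\rho$ follows, and then the countable sup/inf formula above delivers Borel measurability of $\Di_\lambda G$ and $\Ds_\lambda G$; these are then a fortiori $\lambda$-measurable, which is what was asked. There is essentially no obstacle in this argument: the only minor points to watch are the extended-real-valued nature of $G$ (harmless for monotone sup/inf operations) and the replacement of the limit over all $\rho>0$ by the monotone limit along the countable sequence $\rho=1/n$, which is immediate from the monotonicities recorded above.
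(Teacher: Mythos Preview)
Your argument is correct and in fact cleaner than the paper's. You observe directly that each sublevel set $\{I_\rho<t\}$ (and each superlevel set $\{S_\rho>t\}$) is a union of the open cubes witnessing the inequality, hence open; this gives upper (resp.\ lower) semicontinuity of $I_\rho$ and $S_\rho$, and then $\Di_\lambda G=\sup_n I_{1/n}$ and $\Ds_\lambda G=\inf_n S_{1/n}$ are Borel. The paper instead works with the sublevel set $M_c=\{\Di_\lambda G\le c\}$ and, for each $\eta>0$, invokes the Vitali covering lemma (its Lemma on level sets of derivatives) to produce a disjoint countable family of open cubes $\{\Q_i\}$ of diameter $<\eta$ with $G(\Q_i)<(c+\eta)\lambda(\Q_i)$ covering $M_c$ up to a $\lambda$-null set; intersecting these Borel approximants over $\eta=1/n$ and using completeness of $\lambda$ yields only $\lambda$-measurability. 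Your route avoids Vitali coverings entirely, needs no completeness of the measure, and delivers the slightly stronger Borel conclusion; the paper's route, by contrast, stays within the Vitali-covering machinery that drives the rest of the section.
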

\begin{remark}\label{diff-borel-measure} When $G=\nu$ is a Borel finite measure absolutely continuous with respect to $\lambda$ then $\nu$ is $\lambda$-differentiable in $O$ and 
\begin{align*}
D_\lambda \nu(x)=\lim_{\rho\to 0}\frac{\nu(\Q_\rho(x))}{\rho^d}\quad\mbox{ a.e. in }O.
\end{align*}
\end{remark}
The following result establishes an integral representation for the Vitali envelope of nonnegative set functions.
\begin{proposition}\label{inequality-Vitali-envelope}
Let $H:\Bal_o(\Omega)\to [0,\infty]$ be a set function. For every $O\in\O(\Omega)$ we have
\begin{align*}
V_H(O)=\int_O \Di_\lambda H(y)dy.
\end{align*}
\end{proposition}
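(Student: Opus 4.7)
The plan is to prove the two inequalities separately, reducing both to the Lebesgue differentiation theorem combined with the Vitali covering theorem. Throughout, write $f(y):=\underline{D}_\lambda H(y)$, which is $\lambda$-measurable by Lemma~\ref{measurability}.

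\textbf{Lower bound $V_H(O)\ge\int_O f\,dy$.} For fixed $\eps>0$ and any admissible Vitali family $\{\overline{\Q}_i\}_{i\in I}\in\V^\eps(O)$, rewrite each term as an integral over the cube and use disjointness:
\begin{align*}
\sum_{i\in I} H(\Q_i)=\sum_{i\in I}\int_{\Q_i}\frac{H(\Q_i)}{\lambda(\Q_i)}\,dy\ge \int_{\bigcup_i\Q_i} g_\eps(y)\,dy=\int_O g_\eps(y)\,dy,
\end{align*}
where $g_\eps(y):=\inf\{H(\Q)/\lambda(\Q):y\in\Q\in\Bal_o(O),\,\diam(\Q)\le\eps\}$; the key point is that for $y\in\Q_i$ the cube $\Q_i$ itself is admissible in the infimum defining $g_\eps(y)$, and that $\lambda(O\ssetminus\cup_i\Q_i)=0$. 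Taking the infimum over $\V^\eps(O)$, then the supremum over $\eps$, and applying monotone convergence to the increasing family $g_\eps\nearrow f$ as $\eps\downarrow0$ yields $V_H(O)\ge\int_O f\,dy$.

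\textbf{Upper bound $V_H(O)\le\int_O f\,dy$.} We may assume $f\in L^1(O)$, else the lower bound forces $V_H(O)=+\infty=\int_O f$. Fix $\eps>0$ and $\eta>0$. For a.e.~$x\in O$, the following two properties hold simultaneously: (a) $f(x)<\infty$ and by definition of $\underline{D}_\lambda H$, for arbitrarily small $\rho\in(0,\eps)$ there exists $\Q\in\Bal_o(O)$ with $x\in\Q$, $\diam(\Q)\le\rho$ and $H(\Q)/\lambda(\Q)\le f(x)+\eta/2$; (b) $x$ is a Lebesgue point of $f$, so that $\fint_{\Q}f\,dy\to f(x)$ as $\diam(\Q)\to 0$ with $x\in\Q$ (using that cubes have bounded eccentricity). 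Combining (a) and (b), for a.e.~$x$ there exist arbitrarily small cubes $\Q\ni x$, $\Q\in\Bal_o(O)$, $\diam(\Q)\le\eps$, satisfying
\begin{align*}
H(\Q)\le\int_\Q f(y)\,dy+\eta\,\lambda(\Q).
\end{align*}
The collection of such closed cubes $\overline{\Q}$ forms a fine cover (in the sense of Vitali) of a full-measure subset of $O$. By the Vitali covering theorem for cubes in $\RR^d$, extract a countable pairwise disjoint subfamily $\{\overline{\Q}_i\}_{i\in I}$ with $\overline{\Q}_i\subset O$, $\diam(\Q_i)\in(0,\eps)$, and $\lambda(O\ssetminus\cup_i \Q_i)=0$, i.e., an element of $\V^\eps(O)$. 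Summing the inequality over $i$ and using disjointness,
\begin{align*}
\sum_{i\in I}H(\Q_i)\le \sum_{i\in I}\int_{\Q_i} f\,dy+\eta\sum_{i\in I}\lambda(\Q_i)\le\int_O f\,dy+\eta\,\lambda(O).
\end{align*}
Hence the inner infimum in the definition of $V_H(O)$ at scale $\eps$ is bounded above by $\int_O f\,dy+\eta\,\lambda(O)$. Letting $\eta\downarrow 0$ and then taking the supremum over $\eps$ concludes the proof.

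\textbf{Main obstacle.} The delicate point is the upper bound, where one must ensure that cubes satisfying the almost optimal ratio for $H$ can be chosen simultaneously with almost optimal averaging of $f$. This is resolved by intersecting the full-measure sets produced by the definition of $\underline{D}_\lambda H$ and by Lebesgue's differentiation theorem, and then invoking the Vitali covering theorem to extract a disjoint exact cover at the prescribed scale. The nonintegrable case for $f$ is a harmless reduction since the lower bound then directly forces equality at $+\infty$.
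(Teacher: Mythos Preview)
Your proof is correct and takes a genuinely more direct route than the paper's. The paper first proves an auxiliary lemma (Lemma~\ref{abstract-diff}): if $\Di_\lambda G\le 0$ a.e.\ then $V_G(O)\le 0$, and symmetrically for $\ge$. It then applies this to $G:=H-\nu$ with $\nu=\Di_\lambda H\cdot\lambda$, checking $\Di_\lambda G=0$ a.e.\ via the Lebesgue differentiation theorem, and deducing $V_H(O)=\nu(O)$. The non-integrable case is handled by a separate truncation $H_n$ of $H$ and monotone convergence. Your argument bypasses all of this: the lower bound is a single line using that each $H(\Q_i)/\lambda(\Q_i)$ dominates the pointwise infimum $g_\eps$ on $\Q_i$, then monotone convergence in $\eps$; the upper bound is exactly the Vitali construction, combined at each point with Lebesgue differentiation for the density $f$. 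Your non-integrable reduction is also shorter. What the paper's packaging buys is a reusable statement about signed set functions (Lemma~\ref{abstract-diff}); what your approach buys is simplicity and a proof that makes no detour through an auxiliary $G$.

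Two small technical points you could make explicit. First, your intermediate function $g_\eps$ is in fact upper semicontinuous (if $g_\eps(y)<t$ via some open cube $\Q\ni y$, then $g_\eps<t$ on all of $\Q$), so its measurability, needed for monotone convergence, is immediate. Second, in the upper bound you should restrict the fine cover to cubes with $\overline{\Q}\subset O$, as required by the definition of $\V^\eps(O)$; this costs nothing since for each $x\in O$ arbitrarily small cubes around $x$ have closure in $O$.
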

The following lemma was inspired by reading \cite{bongiorno81}.
\begin{lemma}\label{abstract-diff} Let $G:\Bal_o(\Omega)\to ]-\infty,\infty]$ be a set function and $O\in\O(\Omega)$. 
\begin{hyp1}
\item\label{Fund1} If $\Di_\lambda G(x)\le 0$ $\lambda$-a.e. in $O$ then $V_G(O)\le 0$. 
\item\label{Fund2} If $\Di_\lambda G(x)\ge 0$ $\lambda$-a.e. in $O$ then $V_G(O)\ge 0$. 
\item\label{Fund3} If $\Di_\lambda G(x)= 0$ $\lambda$-a.e. in $O$ then $V_G(O)= 0$. 
\end{hyp1}
\end{lemma}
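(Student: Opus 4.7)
The plan is to handle the three parts by Vitali covering arguments. Part (c) is an immediate consequence of (a) and (b), so I concentrate on the two principal claims, which are dual in spirit but of differing difficulty.

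For (a), the approach is constructive. Fix $\eta,\eps>0$. The hypothesis $\Di_\lambda G(x)\le 0$ at $\lambda$-a.e.\ $x\in O$ unpacks, via the definition of the lower derivative as the limit of the infima $\inf\{G(\Q)/\lambda(\Q):x\in\Q,\;\diam(\Q)\le\rho\}$ as $\rho\to 0$, into the following selection property: at $\lambda$-almost every $x\in O$ and for every $\rho>0$ there exists an open cube $\Q\ni x$ with $\overline{\Q}\subset O$, $\diam(\Q)\le\rho$ and $G(\Q)\le\eta\lambda(\Q)$. The family of all such cubes of diameter less than $\eps$ is thus a Vitali fine cover of a full-measure subset of $O$, and Vitali's covering theorem produces a countable disjoint subfamily $\{\overline{\Q}_i\}_{i\in I}\in\V^\eps(O)$. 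By construction $\sum_i G(\Q_i)\le\eta\sum_i\lambda(\Q_i)=\eta\lambda(O)$; this makes the inner infimum at scale $\eps$ at most $\eta\lambda(O)$, and letting $\eta\to 0$ and then taking the sup over $\eps$ yields $V_G(O)\le 0$.

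Part (b) is dual but more delicate, because the infimum must be bounded from below. Fix $\eta>0$; the hypothesis $\Di_\lambda G(x)\ge 0$ a.e.\ yields, at $\lambda$-a.e.\ $x\in O$, a threshold $\rho_\eta(x)>0$ such that every open cube $\Q\ni x$ with $\overline{\Q}\subset O$ and $\diam(\Q)\le\rho_\eta(x)$ satisfies $G(\Q)\ge -\eta\lambda(\Q)$. The measurable sets $F_n:=\{x:\rho_\eta(x)>1/n\}$ increase to a full-measure subset of $O$, so $n$ can be chosen with $\lambda(O\setminus F_n)<\eta$; set $\eps_0:=1/n$. For any $\{\overline{\Q}_i\}_{i\in I}\in\V^{\eps_0}(O)$, partition $I=I_1\cup I_2$ where $I_1:=\{i:\Q_i\cap F_n\ne\emptyset\}$. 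For $i\in I_1$ pick $x_i\in\Q_i\cap F_n$: since $\diam(\Q_i)<\eps_0<\rho_\eta(x_i)$ we get $G(\Q_i)\ge-\eta\lambda(\Q_i)$, so the $I_1$-contribution is at least $-\eta\lambda(O)$. The cubes in $I_2$ lie in $O\setminus F_n$, of Lebesgue measure less than $\eta$; their contribution is controlled by a second, inner Vitali extraction on $O\setminus F_n$ into good small sub-cubes, following the strategy of \cite{bongiorno81}, yielding a bound of order $\eta$. The inner infimum at scale $\eps_0$ is therefore at least $-C\eta$ for some $C$ independent of $\eta$; sending $\eta\to 0$ gives $V_G(O)\ge 0$.

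The main obstacle is precisely the bookkeeping in (b): the $G$-values on the bad cubes in $I_2$ are a priori uncontrolled, and their aggregate contribution must be tamed by the nested Vitali extraction combined with the absolute continuity of $\lambda$. Once (a) and (b) are established, part (c) follows by combining them to pin $V_G(O)$ between $0$ and $0$.
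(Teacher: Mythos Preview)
Your treatment of (a) is correct and coincides with the paper's: both use the hypothesis to produce, at $\lambda$-a.e.\ point of $O$, arbitrarily small cubes with $G(\Q)<\eta\lambda(\Q)$, extract a Vitali subcover, and sum.

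Your argument for (b), however, contains a real gap, and it is precisely at the place you flagged as the ``main obstacle''. Once you split $I=I_1\cup I_2$, the cubes $\Q_i$ with $i\in I_2$ lie in $O\setminus F_n$, a set of Lebesgue measure $<\eta$; but $G$ is an \emph{arbitrary} set function on $\Bal_o(\Omega)$, with no absolute-continuity or lower-bound hypothesis, so the small measure of $\bigcup_{i\in I_2}\Q_i$ tells you nothing about $\sum_{i\in I_2}G(\Q_i)$. Your proposed fix --- a ``second, inner Vitali extraction on $O\setminus F_n$ into good small sub-cubes'' --- cannot work here: you are trying to bound from below the sum $\sum_{i\in I}G(\Q_i)$ for a \emph{given} family $\{\Q_i\}\in\V^{\eps_0}(O)$, and replacing the bad $\Q_i$ by smaller sub-cubes produces a \emph{different} family (at a possibly finer scale) whose $G$-sum bears no relation to the original one. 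No iteration of this kind closes the gap, because at every stage the remaining bad cubes carry uncontrolled $G$-values.

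The paper's route for (b) is structurally different and sidesteps this difficulty. Instead of trying to bound every family in $\V^{\eps}(O)$ uniformly from below, it chooses a sequence $\eps_n\to 0$, picks for each $n$ a near-minimising family $\{\Q_i^n\}_{i\in I_n}\in\V^{\eps_n}(O)$, and then fixes a \emph{single} point $x\in O$ (of full measure) that lies in some cube $\Q^n_{i_x^n}$ of every family and at which $\Di_\lambda G(x)\ge 0$. The lower bound on $V_G(O)$ is then obtained by comparing the near-optimal sum only with the contribution of the one cube $\Q^n_{i_x^n}$ containing $x$, and passing to the limit $n\to\infty$ so that $G(\Q^n_{i_x^n})/\lambda(\Q^n_{i_x^n})$ is controlled by $\Di_\lambda G(x)\ge 0$. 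In other words, the paper's proof is pointwise rather than uniform, and never has to account for ``bad'' cubes at all.
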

\begin{proof} The assertion~\ref{Fund3} is a consequence of~\ref{Fund1} and~\ref{Fund2}. \\

{\em Proof of~\ref{Fund1}.} It is enough to show that for every $\eps\ssup 0$ if 
\begin{align}\label{abstract-diff-eq1}
\Di_\lambda G(x)\sinf\eps\quad\lambda\text{-a.e. in }O
\end{align}
then
\[
\inf\left\{\sum_{i\in I}G(\Q_i):\left\{\overline{\Q}_i\right\}_{i\in I}\in\V_{\eps}(O)\right\}\sinf\eps\lambda(O).
\]
Fix $\eps\ssup 0$. Let $N\subset O$ with $\lambda(N)=0$ be such that $O\setminus N=\left[ \Di_\lambda G(\cdot)\sinf\eps\right]$. Using Lemma~\ref{consequenceSh-0}~\ref{consequenceSh-1} with $h=\eta=\eps$ and $S_h=O\setminus N$, there exists a {countable} pairwise disjointed family $\{\Q_i\}_{i\in I}\subset\Bal_o(O)$ such that
\begin{align}\label{abstract-diff-eq43}
\lambda((O\setminus N)\setminus \mathop{\cup}_{i\in I}\Q_i)=0,\quad \forall i\in I\quad G(\Q_i)\sinf \eps\lambda(\Q_i)\mbox{ and }\;\diam(\Q_i)\sinf\eps.
\end{align}
From \eqref{abstract-diff-eq43} we have $\lambda(O\ssetminus \mathop{\cup}_{i\in I}\overline{\Q}_i)=0$ since $\lambda(N)=0$. Consequently, we have $\sum_{i\in I}\lambda(\overline{\Q}_i)=\sum_{i\in I}\lambda({\Q}_i)=\lambda(O)$. Summing over $i\in I$ the first inequality~\eqref{abstract-diff-eq43} we obtain
\begin{align*}
\inf\left\{\sum_{i\in I}G(\Q_i):\left\{\overline{\Q}_i\right\}_{i\in I}\in\V_{\eps}(O)\right\}\sinf \eps\sum_{i\in I}\lambda({\Q}_i) =\eps\lambda(O).\\
\end{align*}

{\em Proof of~\ref{Fund2}.} Let $\{\eps_n\}_{n\in\NN}\subset ]0,1[$ be such that $\lim_{n\to\infty}\eps_n=0$. Let $n\in\NN$. There exists $\{\Q_i^n\}_{i\in I_n}\in \V_{\eps_n}(O)$ such that
\begin{align}\label{V-positif}
\inf\left\{\sum_{i\in I}G(\Q_i):\{\Q_i\}\in\V_{\eps_n}(O)\right\}+\frac1n\ge\sum_{i\in I_n}\frac{G(\Q_i^n)}{\lambda(\Q_i^n)}\lambda(\Q_i^n).
\end{align}
Fix $x\in O$ be such that
\begin{align*}
\Di_\lambda G(x)\ge 0\quad\mbox{ and }\quad x\notin\mathop{\bigcup}_{n\in\NN}(O\setminus \mathop{\cup}_{i\in I_n}\Q_i^n).
\end{align*}
There exists $i_x^n\in I_n$ such that $x\in \Q_{i_x^n}^n$. From~\eqref{V-positif} it follows that
\begin{align}\label{V-positif-1}
&\inf\left\{\sum_{i\in I}G(\Q_i):\{\Q_i\}\in\V_{\eps_n}(O)\right\}+\frac1n\\
&\ge\frac{G(\Q_{i_x^n}^n)}{\lambda(\Q_{i_x^n}^n)}\lambda(\Q_{i_x^n}^n)\notag\\
&\ge \inf\left\{\frac{G(\Q)}{\lambda(\Q)}:x\in \Q\in \Bal_o(O),\;\diam(\Q)\le \eps_n\right\}\lambda(\Q_{i_x^n}^n).\notag
\end{align}
Passing to the limit $n\to\infty$ in~\eqref{V-positif-1} we obtain
\begin{align*}
V_G(O)\ge \Di_\lambda G(x)\liminf_{n\to\infty}\lambda(\Q_{i_x^n}^n)\ge0.
\end{align*}
The proof is complete.\mbox{\qedhere}
\end{proof}

\begin{corollary}\label{shouldbtheorem} If $G=H-\nu$ where $H:\Bal_o(\Omega)\to ]-\infty,\infty]$ is a set function, $\nu$ is a finite Borel measure on $O\in\O(\Omega)$ absolutely continuous with respect to $\lambda\lfloor_O$, and if 
\begin{align*}
\Di_\lambda G(x)=0\quad\lambda\mbox{-a.e. in } O
\end{align*} 
then
\begin{align*}
V_H(O)=\nu(O).
\end{align*}
\end{corollary}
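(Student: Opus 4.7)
The plan is to reduce the statement to Lemma~\ref{abstract-diff}~\ref{Fund3} by showing that the Vitali envelope satisfies the additive identity
\[
V_G(O)=V_H(O)-\nu(O),
\]
so that the vanishing of $V_G(O)$ immediately yields $V_H(O)=\nu(O)$.

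First I would exploit the absolute continuity of $\nu$ with respect to $\lambda\lfloor_O$ at the level of Vitali covers. Fix $\eps\ssup 0$ and let $\{\overline{\Q}_i\}_{i\in I}\in\V^\eps(O)$. Since the closed cubes are pairwise disjoint, so are the open cubes $\Q_i$; by the covering condition $\lambda(O\setminus\cup_{i\in I}\Q_i)=0$, and absolute continuity of $\nu$ then gives $\nu(O\setminus\cup_{i\in I}\Q_i)=0$. Countable additivity of $\nu$ yields
\[
\sum_{i\in I}\nu(\Q_i)=\nu\bigl(\mathop{\cup}_{i\in I}\Q_i\bigr)=\nu(O)\sinf\infty.
\]
Because $\nu(\Q_i)\in[0,\infty[$ while $H(\Q_i)\in\,]-\infty,\infty]$, we may subtract termwise in the extended reals and sum to obtain
\[
\sum_{i\in I}G(\Q_i)=\sum_{i\in I}H(\Q_i)-\sum_{i\in I}\nu(\Q_i)=\sum_{i\in I}H(\Q_i)-\nu(O).
\]
Taking the infimum over $\V^\eps(O)$ and then the supremum over $\eps\ssup 0$, the constant $-\nu(O)$ factors out, which gives the desired identity $V_G(O)=V_H(O)-\nu(O)$.

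Next I would apply Lemma~\ref{abstract-diff}~\ref{Fund3} to $G$: the hypothesis $\Di_\lambda G(x)=0$ $\lambda$-a.e.\ in $O$ is exactly what is needed to conclude $V_G(O)=0$. Combining this with the additive identity above yields $V_H(O)=\nu(O)$, which is the claim.

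The main technical point in the argument is the additivity step: it relies on the fact that a Vitali cover in $\V^\eps(O)$ actually exhausts $O$ up to a $\lambda$-null set, so that $\sum_i\nu(\Q_i)=\nu(O)$ is a genuine equality (and not merely an inequality). Once that observation is in place, the subtraction $H-\nu$ in the Vitali envelope reduces to subtracting a fixed finite constant, and no further measurability, derivation-of-measures, or Proposition~\ref{inequality-Vitali-envelope}-type argument is required.
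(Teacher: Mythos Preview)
Your proposal is correct and follows exactly the paper's approach: the paper's proof is the single line ``Use Lemma~\ref{abstract-diff}\ref{Fund3} and remark that $V_G(O)=V_H(O)-\nu(O)$,'' and you have simply supplied the details behind that remark, namely that for every Vitali cover $\{\overline{\Q}_i\}_{i\in I}\in\V^\eps(O)$ one has $\sum_i\nu(\Q_i)=\nu(O)$ by absolute continuity, so the constant $-\nu(O)$ factors through the infimum and the supremum.
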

\begin{proof} Use Lemma~\ref{abstract-diff}\ref{Fund3} and remark that $V_G(O)=V_H(O)-\nu(O)$.\mbox{\qedhere}
\end{proof}
\subsubsection*{\bf Proof of Proposition~\ref{inequality-Vitali-envelope}} Assume that $\Di_\lambda H\in L^1(O)$, i.e.,
\begin{align*}
\int_O \Di_\lambda  H(y)dy<\infty.
\end{align*}
Define $G:\Bal_o(O)\to ]-\infty,\infty]$ by
\begin{align*}
G(\Q):=H(\Q)-\int_\Q \Di_\lambda  H(y)dy.
\end{align*}
If we show that $\Di_\lambda G=0$ a.e. in $O$ then by Corollary~\ref{shouldbtheorem} we can conclude that
\begin{align*}
V_H(O)=\nu(O)\mbox{ with }\nu:= \Di_\lambda  H(\cdot)\lambda\lfloor_{O}.
\end{align*}
Fix $x\in O$ such that
\begin{align}
\label{lebesguepoint1}\Di_\lambda  H(x)&=\lim_{\rho\to 0}\fint_{\Q_\rho(x)} \Di_\lambda  H(y)dy<\infty;\\
\label{lebesguepoint2}\Di_\lambda\nu(x)&=\Di_\lambda  H(x)\sinf\infty.
\end{align}
We set $\mathfrak{B}_{x,\rho}:=\left\{\Q:x\in\Q\in\Bal_o(O)\mbox{ and }\diam(\Q)\le\rho\right\}$ for all $\rho\ssup0$. On one hand, we have for every $\rho\ssup0$ and every $\Q\in \mathfrak{B}_{x,\rho}$
\begin{align*}
\frac{G(\Q)}{\lambda(\Q)}&\le \frac{H(\Q)}{\lambda(\Q)}+\sup\left\{-\fint_\Q \Di_\lambda  H(y)dy:\Q\in\mathfrak{B}_{x,\rho}\right\}\\
&=\frac{H(\Q)}{\lambda(\Q)}-\inf\left\{\fint_\Q \Di_\lambda  H(y)dy:\Q\in\mathfrak{B}_{x,\rho}\right\}.
\end{align*}
Taking the infimum over every $\Q\in\mathfrak{B}_{x,\rho}$ we obtain
\begin{align*}
&\inf\left\{\frac{G(\Q)}{\lambda(\Q)}:\Q\in\mathfrak{B}_{x,\rho}\right\}\\
&\le\inf\left\{\frac{H(\Q)}{\lambda(\Q)}:\Q\in\mathfrak{B}_{x,\rho}\right\}-\inf\left\{\fint_\Q \Di_\lambda  H(y)dy:\Q\in\mathfrak{B}_{x,\rho}\right\}.
\end{align*}
Letting $\rho\to 0$ and using~\eqref{lebesguepoint1} and~\eqref{lebesguepoint2}, we have
\begin{align}\label{DGs}
\Di_\lambda G(x)\le \Di_\lambda H(x)-\Di_\lambda H(x)=0.
\end{align}
On the other hand, we have for every $\rho\ssup0$ and every $\Q\in\mathfrak{B}_{x,\rho}$
\begin{align*}
\frac{G(\Q)}{\lambda(\Q)}&\ge \frac{H(\Q)}{\lambda(\Q)}+\inf\left\{-\fint_\Q \Di_\lambda  H(y)dy:\Q\in\mathfrak{B}_{x,\rho}\right\}\\
&=\frac{H(\Q)}{\lambda(\Q)}-\sup\left\{\fint_\Q \Di_\lambda  H(y)dy:\Q\in\mathfrak{B}_{x,\rho}\right\}.
\end{align*}
Taking the infimum over every $\Q\in\mathfrak{B}_{x,\rho}$ we obtain
\begin{align*}
&\inf\left\{\frac{G(\Q)}{\lambda(\Q)}:\Q\in\mathfrak{B}_{x,\rho}\right\}\\
&\le\inf\left\{\frac{H(\Q)}{\lambda(\Q)}:\Q\in\mathfrak{B}_{x,\rho}\right\}-\sup\left\{\fint_\Q \Di_\lambda  H(y)dy:\Q\in\mathfrak{B}_{x,\rho}\right\}.
\end{align*}
Letting $\rho\to 0$ and using~\eqref{lebesguepoint1} and~\eqref{lebesguepoint2}, we have
\begin{align}\label{DGi}
\Di_\lambda G(x)\ge \Di_\lambda H(x)-\Di_\lambda H(x)=0.
\end{align}
Taking account of~\eqref{DGs} and~\eqref{DGi}, we finally obtain that $\Di_\lambda G(x)=0$. \\

Now, we do not assume that $\Di_\lambda H\in L^1(O)$, in this case the following inequality is always true
\begin{align*}
V_H(O)\le \int_O\Di_\lambda H(y)dy.
\end{align*}
It remains to prove the opposite inequality. For every $n\in\NN$ we set $H_n:\Bal_0(\Omega)\to [0,\infty[$ defined by
\begin{align*}
H_n(\Q):=\left\{\begin{array}{cl}
H(\Q)&\mbox{ if }H(\Q)\le n\lambda(\Q)\\ \\
n\lambda(\Q)&\mbox{ if }H(\Q)\ssup n\lambda(\Q).
\end{array}
\right.
\end{align*}
It is easy to see that 
\begin{align}
\label{mono-Hn}&\forall \Q\in\Bal_0(\Omega)\qquad H_0(\Q)\le H_1(\Q)\le\dots\le H_n(\Q)\le\dots\le\sup_{n\in\NN}H_n(\Q)\le H(\Q);\\
&\forall n\in\NN\qquad\quad\quad\quad\Di_\lambda H_n\le n.\notag
\end{align}
So $\{\Di_\lambda H_n\}_{n\in\NN}\subset L^1(O)$, we apply the first part of the proof to have
\begin{align*}
\forall n\in\NN\qquad V_{H_n}(O)=\int_O \Di_\lambda H_n(y)dy\le V_H(O).
\end{align*}
Using~\eqref{mono-Hn} and monotone convergence theorem we have 
\begin{align}\label{monotone-VH}
\sup_{n\in\NN}V_{H_n}(O)=\int_O \sup_{n\in\NN} \Di_\lambda H_n(y)dy\le V_H(O).
\end{align}
Fix $n\in\NN$ and $x\in [\Di_\lambda H\le n]$. Then for every $\rho\ssup 0$ we have
\begin{align}\label{inequality-rn}
\inf_{\Q\in \mathfrak{B}_{x,\rho}}\frac{H(\Q)}{\lambda(\Q)}\le n.
\end{align} 
For each $n\in\NN$ and each $\rho\ssup0$ we set $A_n:=\{\Q\in{\mathfrak{B}_{x,\rho}}:H(\Q)\le n\lambda(\Q)\}$ and $B_n:=\mathfrak{B}_{x,\rho}\setminus A_n$. Then 
\begin{align*}
\Di_\lambda H_n(x)&=\sup_{\rho>0}\inf_{\Q\in \mathfrak{B}_{x,\rho}}\frac{H_n(\Q)}{\lambda(\Q)}\\
&=\sup_{\rho>0}\min\left\{\inf_{\Q\in A_n}\frac{H_n(\Q)}{\lambda(\Q)},\inf_{\Q\in B_n}\frac{H_n(\Q)}{\lambda(\Q)}\right\}\\
&=\sup_{\rho>0}\min\left\{\inf_{\Q\in A_n}\frac{H(\Q)}{\lambda(\Q)},n\right\}\\
&\ge\sup_{\rho>0}\min\left\{\inf_{\Q\in \mathfrak{B}_{x,\rho}}\frac{H(\Q)}{\lambda(\Q)},n\right\}.
\end{align*}
Using~\eqref{inequality-rn} we find
\[
\Di_\lambda H_n(x)\ge \sup_{\rho>0}\inf_{\Q\in \mathfrak{B}_{x,\rho}}\frac{H(\Q)}{\lambda(\Q)}=\Di_\lambda H(x).
\]
It follows that $\sup_{n\in\NN} \Di_\lambda H_n(x)= \Di_\lambda H(x)$ for all $x\in O$ and thus~\eqref{monotone-VH} becomes 
\begin{align*}
\int_O \Di_\lambda H(y)dy\le V_H(O).
\end{align*}
The proof is complete.\quad\mbox{\qedsymbol}

\def\epsn{{\eps_n}}

\section{Proof of main results}\label{Proof-main-theorem}
\subsection{Proof of Lemma~\ref{lemma-H-theorem}} Fix $O\in\O(\Omega)$ and $u\in{\rm M}_\FF(O)$. By Theorem~\ref{main-result-eps}~\ref{Lower Bound} there exists $\{u_{\eps_n}\}_{\eps_n}$ with $\sup_n F_{\eps_n}(u_{\eps_n};\Omega)\sinf\infty$ such that $u_{\eps_n}\to u$ in $L^p(\Omega;\RR^m)$ as $n\to \infty$ and
\begin{align*}
\infty\ssup\FF_-^\D(u;O)\ge\FF_-(u;O)\ge \int_O\lim_{\rho\to 0} \lim_{n\to \infty}\frac{F_{\eps_n}(u_{\eps_n};\Q_\rho(x))}{\rho^d}dx.
\end{align*}
since Remark~\ref{domain-M}. It follows that for almost all $x\in O$
\begin{align}\label{ineq-H2}
\liminf_{\rho\to 0} \frac{\FF_-(u;\Q_\rho(x))}{\rho^d}\ge \lim_{\rho\to 0} \lim_{n\to \infty}\frac{F_{\eps_n}(u_{\eps_n};\Q_\rho(x))}{\rho^d}.
\end{align}
Using the local inequalities~\ref{H0},~\ref{H1}, ~\ref{H2},~\eqref{ineq-H2} and Theorem~\ref{main-result-eps}~\ref{upper bound} we have for almost all $x\in O$ 
\begin{align}
\limsup_{\rho\to 0}\frac{\FF_+(u;\Q_\rho(x))}{\rho^d}&\le \limsup_{\rho\to 0}\frac{\FF_+^\D(u;\Q_\rho(x))}{\rho^d} \notag\\
&\le \Di_\lambda \M_+(u;\cdot)(x)\notag\\
&\le \liminf_{\rho\to 0}\frac{\M_+(u;\Q_\rho(x))}{\rho^d}\notag\\
&\le \limsup_{\rho\to 0}\frac{\M_+(u_x;\Q_\rho(x))}{\rho^d}\notag\\
&\le \liminf_{\rho\to 0}\frac{\M_-(u_x;\Q_\rho(x))}{\rho^d}\notag\\
&\le \lim_{\rho\to 0}\lim_{n\to \infty}\frac{F_{\eps_n}(u_{\eps_n};\Q_\rho(x))}{\rho^d}\notag\\
&\le \liminf_{\rho\to 0} \frac{\FF_-(u;\Q_\rho(x))}{\rho^d}.\label{liminf-choice}
\end{align}
From the last inequality~\eqref{liminf-choice} we have the following inequalities
\begin{align*}
 \liminf_{\rho\to 0} \frac{\FF_-(u;\Q_\rho(x))}{\rho^d}&\le \liminf_{\rho\to 0} \frac{\FF_+(u;\Q_\rho(x))}{\rho^d}\le \limsup_{\rho\to 0} \frac{\FF_+(u;\Q_\rho(x))}{\rho^d}\\
 &\mbox{ and }\\
  \liminf_{\rho\to 0} \frac{\FF_-(u;\Q_\rho(x))}{\rho^d}&\le \limsup_{\rho\to 0} \frac{\FF_-(u;\Q_\rho(x))}{\rho^d}\le \limsup_{\rho\to 0} \frac{\FF_+(u;\Q_\rho(x))}{\rho^d}\\
  &\mbox{ and }\\
 \liminf_{\rho\to 0} \frac{\FF_-(u;\Q_\rho(x))}{\rho^d}&\le  \liminf_{\rho\to 0} \frac{\FF_+^\D(u;\Q_\rho(x))}{\rho^d}\le \limsup_{\rho\to 0} \frac{\FF_+^\D(u;\Q_\rho(x))}{\rho^d}\\
& \mbox{ and }\\
\liminf_{\rho\to 0} \frac{\FF_-(u;\Q_\rho(x))}{\rho^d}&\le  \liminf_{\rho\to 0} \frac{\FF_-^\D(u;\Q_\rho(x))}{\rho^d}\le \limsup_{\rho\to 0} \frac{\FF_-^\D(u;\Q_\rho(x))}{\rho^d}\\&\le \limsup_{\rho\to 0} \frac{\FF_+^\D(u;\Q_\rho(x))}{\rho^d}
\end{align*}
for all $x\in O$. It follows that for almost all $x\in O$
\begin{align*}
\lim_{\rho\to 0} \frac{\FF_-(u;\Q_\rho(x))}{\rho^d}&=\lim_{\rho\to 0} \frac{\FF_+(u;\Q_\rho(x))}{\rho^d}\\
&=\lim_{\rho\to 0} \frac{\FF_+^\D(u;\Q_\rho(x))}{\rho^d}=\lim_{\rho\to 0} \frac{\FF_-^\D(u;\Q_\rho(x))}{\rho^d}\\
&=\Di_\lambda \M_+(u;\cdot)(x)=\liminf_{\rho\to 0}\frac{\M_+(u;\Q_\rho(x))}{\rho^d}\\
&=\limsup_{\rho\to 0}\frac{\M_+(u_x;\Q_\rho(x))}{\rho^d}=\liminf_{\rho\to 0}\frac{\M_-(u_x;\Q_\rho(x))}{\rho^d}.
\end{align*}
So, the proof is complete since $O\ni x\mapsto \Di_\lambda \M_+(u;\cdot)(x)$ is measurable by Lemma~\ref{measurability}.\quad\mbox{\qedsymbol}

\subsection{Proof of Theorem~\ref{main-result-eps}}
\subsubsection*{Proof of Theorem~\ref{main-result-eps}~\ref{Lower Bound}}
Let $(u,O)\in W^{1,p}(\Omega;\RR^m)\times\O(\Omega)$ be such that $\FF_-(u;O)\sinf\infty$. There exists a sequence $\{u_\epsn\}_n\subset W^{1,p}(\Omega;\RR^m)$ such that
\begin{align}\label{bounds-step3}
u_\epsn\to u\mbox{ in }L^p(\Omega;\RR^m),\quad\lim_{n\to \infty} F_\epsn(u_\epsn;O)=\FF_-(u;O)\;\mbox{ and }\;\sup_{n} F_\epsn(u_\epsn;O)\sinf\infty.
\end{align}
By~\ref{C1}, for each $\eps\ssup0$ we consider the Borel measure  $\nu_\eps$ whose the trace on $\O(\Omega)$ is $F_\eps(u_\eps;\cdot)$. From the last inequality of~\eqref{bounds-step3} we can rewrite that the sequence of Borel measures $\left\{\mu_n:=\nu_\epsn\lfloor_{O}\right\}_n$ satisfies $\sup_{n}\mu_n(O)\sinf\infty$. So, there exists a Borel measure $\mu$ on $O$ such that (up to a subsequence) $\mu_n\stackrel{\ast}{\wto}\mu$. By Lebesgue decomposition theorem, we have $\mu=\mu_a+\mu_s$ where $\mu_a$ and $\mu_s$ are nonnegative Borel measures such that $\mu_a\ll \lambda\lfloor_O$ and $\mu_s\perp \lambda\lfloor_O$, and from Radon-Nikodym theorem we deduce that there exists $f\in L^1(O;\RR^+)$, given by
\begin{align*}
f(x)=\lim_{\rho\to 0}{\mu_a(\Q_\rho(x))\over\rho^d}=\lim_{\rho\to 0}{\mu(\Q_\rho(x))\over\rho^d}\;\;\hbox{ a.e. in }O 
\end{align*}
with $\Q_\rho(x):=x+\rho Y$, such that
\[
\mu_a(A)=\int_A f(x)dx\hbox{ for all measurable sets }A\subset O.
\]
By Alexandrov theorem we see that 
\begin{align*}
\FF_-(u;O)&=\lim_{n\to \infty} F_\epsn(u_\epsn;O)\\&=\lim_{n\to \infty}\mu_n(O)
\geq\mu(O)=\mu_a(O)+\mu_s(O)\geq\mu_a(O)=\int_O f(x)dx,
\end{align*}
and 
\begin{align*}
f(x)=\lim_{\rho\to 0}\lim_{n\to \infty} \frac{\mu_n(\Q_\rho(x))}{\rho^d}=\lim_{\rho\to 0}\lim_{n\to \infty} \frac{F_\epsn(u_\epsn;\Q_\rho(x))}{\rho^d}\;\;\hbox{ a.e. in }O.\quad\mbox{\qedsymbol}
\end{align*}

\bigskip

\subsubsection*{Proof of Theorem~\ref{main-result-eps}~\ref{upper bound}} For each $u\in W^{1,p}(\Omega;\RR^m)$ we denote by $\Mm_+(u;\cdot):\O(\Omega)\to [0,\infty]$ the Vitali envelope of $\M_+(u;\cdot)$, i.e.,
\begin{align*}
\Mm_+(u;O):=\Mm_{\M_+(u;\cdot)}(O).
\end{align*}
The proof consists to show that for every $O\in\O(\Omega)$ and every $u\in {\rm M}_\FF(O)$ the following inequality holds
\begin{align}\label{ineq-racine}
\FF_+^\D(u;O)\le \Mm_+(u;O).
\end{align}
Indeed, using Proposition~\ref{inequality-Vitali-envelope} we obtain
\begin{align*}
\FF_+(u;O)\le \FF_+^\D(u;O)\le \Mm_+(u;O)&=\int_O\Di_\lambda \M_+(u;\cdot)(x)dx.
\end{align*}
Let us prove~\eqref{ineq-racine} now. Fix $O\in\O(\Omega)$ and $u\in {\rm M}_\FF(O)$. Note that by Remarks~\ref{trivial-vitali} we have for some $\mu_u\in\mesabs(O)$
\begin{align}\label{eq0: step1}
\Mm_+(u;O)\le \mu_u(O)\sinf\infty.
\end{align}
Fix $\eps\in ]0,1[$. Choose $\left\{\overline{\Q}_i\right\}_{i\in I}\in\mathcal{V}_{\eps}(O)$ such that
\begin{align}\label{eq1: step1}
\sum_{i\in I}\M_+(u;\Q_i)\le \Mm^{\eps}_+(u;O)+\frac\eps2\le \Mm_+(u;O)+\frac\eps2.
\end{align}
Fix $\delta\in ]0,1[$. Given any $i\in I$, by definition of $\M_\delta(u;\Q_i)$, there exists $v_i\in u+{W}^{1,p}_0(\Q_i;\RR^m)$ such that 
\begin{align}\label{eq2: step1}
F_\delta(v_i;\Q_i)\le \M_\delta(u;\Q_i)+\frac\delta2 \frac{\lambda(\Q_i)}{\lambda(O)}.
\end{align}
Define $u_{\delta,\eps}\in u+W^{1,p}_0(O;\RR^m)$ by 
\[
u_{\delta,\eps}:=\sum_{i\in I} v_i\mathds{1}_{\Q_i}+u\mathds{1}_{\displaystyle\Omega\setminus\mathop{\cup}_{i\in I}\Q_i}.
\] 
Using \ref{C1} and \ref{C2} we have from \eqref{eq2: step1}
\begin{align*}
F_\delta(u_{\delta,\eps};O)&= \sum_{i\in I}F_\delta(v_i;\Q_i)+F_\delta(u;{O\setminus\mathop{\cup}_{i\in I}\Q_i})\\
&=\sum_{i\in I}F_\delta(v_i;\Q_i)\notag\\
&\le \sum_{i\in I}\M_\delta(u;\Q_i)+\frac\delta2.\notag
\end{align*}
Since $u\in {\rm M}_\FF(O)$ there exists $\mu_u\in\mesabs(O)$ such that $\sup_{\delta\in ]0,1]} \M_\delta(u;U)\le\mu_u(U)$ for all open set $U\subset O$. For every $\eta\ssup 0$ there exists a finite set $I_\eta\subset I$ such that $\mu_u(O\setminus \cup_{i\in I_\eta}\Q_i)\le\eta$. It follows that $\sum_{i\in I\setminus I_\eta} \M_\delta(u;\Q_i)\le \eta$. Hence, for any $\eta\ssup 0$
\begin{align}\label{eq4: step1}
\limsup_{\delta\to 0}\sum_{i\in I}\M_\delta(u;\Q_i)&\le \limsup_{\delta\to 0}\sum_{i\in I_\eta}\M_\delta(u;\Q_i)+\limsup_{\delta\to 0}\sum_{i\in I\setminus I_\eta}\M_\delta(u;\Q_i)\\
&\le \sum_{i\in I}\M_+(u;\Q_i)+\eta.\notag
\end{align}
Therefore collecting \eqref{eq1: step1}, \eqref{eq4: step1}, and passing to the limit $\eps\to 0$, we have
\begin{align}\label{eq7: step1}
\limsup_{\eps\to 0}\limsup_{\delta\to 0}F_\delta(u_{\delta,\eps};O)\le \Mm_+(u;O).
\end{align}
From the $p$-coercivity \ref{C0}, \eqref{eq7: step1} and \eqref{eq0: step1}, we deduce
\begin{align}\label{eq3: step1}
\limsup_{\eps\to 0}\limsup_{\delta\to 0}\int_O \vert\nabla u_{\delta,\eps}\vert^pdx\sinf\infty.
\end{align}
By Poincar\'e inequality there exists $K\ssup 0$ depending only on $p$ and $d$ such that for each $v_i\in u+{W}^{1,p}_0(\Q_i;\RR^m)$
\begin{align*}
\int_{\Q_i}\vert v_i-u\vert^p dx\le K\eps^p\int_{\Q_i}\vert \nabla v_i-\nabla u\vert^pdx
\end{align*}
since $\diam(\Q_i)\sinf\eps$. Summing over $i\in I$ 
we obtain
\begin{align*}
\int_O\vert u_{\delta,\eps}-u\vert^p dx\le 2^{p-1}K\eps^p(\int_{O}\vert \nabla u_{\delta,\eps}\vert^p dx+\int_O \vert\nabla u\vert^pdx)
\end{align*}
which shows, by using~\eqref{eq3: step1}, that 
\begin{align}\label{eq9: step1}
\limsup_{\eps\to 0}\limsup_{\delta\to 0}\int_\Omega\vert u_{\delta,\eps}-u\vert^p dx=0.
\end{align}
A simultaneous diagonalization of~\eqref{eq7: step1} and~\eqref{eq9: step1} gives a sequence $\{u_\delta:=u_{\delta,\eps(\delta)}\}_\delta\subset u+W^{1,p}(O;\RR^m)$ such that $u_\delta\to u$ in $L^p(\Omega;\RR^m)$ and 
\begin{align*}
\FF_+^\D(u;O)\le \limsup_{\delta\to 0}F_\delta(u_{\delta};O)\le \Mm_+(u;O)
\end{align*}
by the definition of $\FF_+^\D(u;O)$. The proof is complete.\quad\mbox{\qedsymbol}

\section{Applications}\label{Integral representation with $p$-growth conditions}
\subsection{General $\Gammap$-convergence result in the $p$-growth case} 
For each $\eps\in ]0,1]$ we consider a family of functionals $\FF:=\{F_\eps\}_{\eps\in ]0,1]},\;F_\eps:W^{1,p}(\Omega;\RR^m)\times\O(\Omega)\to [0,\infty]$. 

Consider the following condition:
\begin{hypG}
\item\label{L1} there exist $\beta\ssup 0$ and $\nu$ a nonnegative finite Borel measure on $\Omega$ absolutely continuous with respect to the Lebesgue measure such that for every $(V,u,\eps)\in\O(\Omega)\times W^{1,p}(\Omega;\RR^m)\times]0,1]$ we have
\begin{align*}
 \frac{\M_\eps(u;V)}{\vert V\vert}\le \beta(\frac{\nu(V)}{\vert V\vert}+\fint_V\vert u\vert^pdx+\fint_V\vert \nabla u\vert^pdx)
\end{align*}
\end{hypG}

The following result can be seen as a nonconvex extension of Theorem IV of \cite[p. 265]{dalmaso-modica86}. Indeed, if for each $\eps\ssup0$ we set $F_\eps:W^{1,p}(\Omega;\RR^m)\times \O(\Omega)\to [0,\infty]$ defined by
\begin{align*}
F_\eps(u;O):=\int_O \f_\eps(x,u(x),\nabla u(x))dx
\end{align*}
where $\f_\eps:\Omega\times\RR^d\times\MM^{m\times d}\to [0,\infty[$ is a Borel measurable function with $p$-growth and $p$-coercivity, i.e., 
\begin{align}\label{p-growth-p-coercivity}
\exists \alpha\ssup0\quad\exists\beta\ssup0\quad&\exists a\in L^1(\Omega)\quad \forall (x,v,\xi)\in\Omega\times\RR^d\times\MM^{m\times d}\quad  \forall\eps\ssup0\\ \notag \\
&\alpha\vert\xi\vert^p\le \f_\eps(x,v,\xi)\le\beta(a(x)+\vert v\vert^p+\vert\xi\vert^p)\notag
\end{align}
then~\ref{L1} holds with $\nu=a(\cdot)\lambda$ and $\FF=\{F_\eps\}_\eps\subset \mathcal{I}(p,\alpha)$.
\begin{theorem}\label{p-growth-theorem} Assume that $\FF\subset \mathcal{I}(p,\alpha)$. Let $u\in W^{1,p}(\Omega;\RR^m)$ and $O\in\O(\Omega)$. If~\ref{H0} and~\ref{L1} hold then the family $\FF(\cdot;O)$ $\Gammap$-converges at $u$ to 
\begin{align*}
\FF_0(u,O):=\int_O L_{0}(x,u(x),\nabla u(x))dx
\end{align*}
where $\f_0(\cdot,u(\cdot),\nabla u(\cdot))$ is given by~\eqref{generic-formula}.
\end{theorem}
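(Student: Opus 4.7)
The plan is to reduce Theorem~\ref{p-growth-theorem} to Theorem~\ref{H-theorem}. Since~\ref{H0} is part of the hypothesis, the remaining tasks are (a) verifying that every $u\in W^{1,p}(\Omega;\RR^m)$ lies in ${\rm M}_\FF(O)$, and (b) deriving the local inequalities~\ref{H1} and~\ref{H2} from the $p$-growth condition~\ref{L1}.

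For (a), the estimate~\ref{L1} shows that $\sup_\eps\M_\eps(u;\cdot)\le\mu_u$ on $O$, where the set function
\begin{align*}
\mu_u(V):=\beta\Big(\nu(V)+\int_V |u|^p\,dx+\int_V|\nabla u|^p\,dx\Big)
\end{align*}
defines a finite Borel measure on $O$ absolutely continuous with respect to $\lambda\lfloor_O$. In particular $u\in{\rm M}_\FF(O)$, so the domain hypothesis of Theorem~\ref{H-theorem} is satisfied by every $u\in W^{1,p}(\Omega;\RR^m)$.

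For (b) I would run a cut-off/slicing argument in the spirit of~\cite{bouchitte-fonseca-mascarenhas98}. Fix $x\in O$ which is simultaneously a Lebesgue point of $d\nu/d\lambda$, $|u|^p$, $|\nabla u|^p$, and an $L^p$-differentiability point of $u$, so that $\fint_{\Q_\rho(x)}|u-u_x|^p\,dy=o(\rho^p)$ as $\rho\to 0$. To prove~\ref{H2}, given $\rho,t\in(0,1)$ I pick $w_\eps\in u_x+W^{1,p}_0(\Q_\rho(x);\RR^m)$ with $F_\eps(w_\eps;\Q_\rho(x))\le\M_\eps(u_x;\Q_\rho(x))+\eps^2$, and a cutoff $\varphi$ equal to $1$ on $\Q_{\rho(1-t)}(x)$, vanishing on $\partial\Q_\rho(x)$, satisfying $|\nabla\varphi|\le C/(t\rho)$. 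Form $\tilde w_\eps:=\varphi\, w_\eps+(1-\varphi)\,u$ on the annulus $A:=\Q_\rho(x)\setminus\overline{\Q_{\rho(1-t)}(x)}$. To make this yield an admissible competitor for $\M_\eps(u;\Q_\rho(x))$, I re-solve a Dirichlet problem on $A$: let $\hat w_\eps$ be $\eps^2$-optimal for $\M_\eps(\tilde w_\eps;A)$, then glue $\hat w_\eps$ on $A$ to $w_\eps$ on $\Q_{\rho(1-t)}(x)$. The resulting function lies in $u+W^{1,p}_0(\Q_\rho(x);\RR^m)$, and its $F_\eps$-energy splits as the interior part (bounded by $\M_\eps(u_x;\Q_\rho(x))+\eps^2$) plus $F_\eps(\hat w_\eps;A)\le\M_\eps(\tilde w_\eps;A)+\eps^2$, which by~\ref{L1} is controlled by
\begin{align*}
\beta\Big(\nu(A)+\int_A|\tilde w_\eps|^p\,dy+\int_A|\nabla\tilde w_\eps|^p\,dy\Big).
\end{align*}
Expanding $\nabla\tilde w_\eps$ and using Poincar\'e on $w_\eps-u_x$, the coercivity~\ref{C0}, and the Lebesgue-point properties of $x$, each summand divided by $\rho^d$ splits into $o(1)$ plus a term of order $t\cdot C(x)$ as $\rho\to 0$. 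Sending $\eps\to 0$, then $\rho\to 0$, then $t\to 0$, yields~\ref{H2}. The inequality~\ref{H1} follows from an entirely analogous construction: starting from a given sequence $\{u_\eps\}$ with $\sup_\eps F_\eps(u_\eps;O)<\infty$ and $u_\eps\to u$ in $L^p$, I set $\tilde v_\eps:=\varphi\,u_\eps+(1-\varphi)\,u_x$ on $A$, re-solve the Dirichlet problem on $A$ with datum $\tilde v_\eps$, and glue to $u_\eps$ inside, producing a competitor for $\M_\eps(u_x;\Q_\rho(x))$ whose energy is at most $F_\eps(u_\eps;\Q_\rho(x))+$(annulus term controlled by~\ref{L1}).

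The technical crux is that~\ref{L1} bounds the Dirichlet-minimum $\M_\eps$, not the full energy $F_\eps$. On the annular transition layer one therefore cannot apply~\ref{L1} directly to $\tilde w_\eps$; one must re-open a local Dirichlet subproblem on $A$ before gluing. Managing the boundary-matching across the inner seam $\partial\Q_{\rho(1-t)}(x)$, while keeping the annular contribution $o(\rho^d)$ once $\eps\to 0$, $\rho\to 0$, and $t\to 0$ are taken in the correct order, is the main obstacle, and it is precisely there that the cutoff, the Poincar\'e inequality, and the Lebesgue-point choice of $x$ must work together.
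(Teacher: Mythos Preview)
Your overall strategy---reduce to Theorem~\ref{H-theorem} by showing ${\rm M}_\FF(O)=W^{1,p}(\Omega;\RR^m)$ from~\ref{L1} and then verifying~\ref{H1},~\ref{H2} via cut-offs---is exactly the paper's. But your construction for~\ref{H2} has a real gap. You take the near-minimizer $w_\eps$ for $\M_\eps(u_x;\cdot)$ on the \emph{full} cube $\Q_\rho(x)$ and only afterwards transition to $u$ on the annulus $A$. The annulus bound coming from~\ref{L1} then contains $\int_A|\nabla\tilde w_\eps|^p$, hence $\int_A|\nabla w_\eps|^p$; but coercivity~\ref{C0} together with~\ref{L1} only gives $\int_{\Q_\rho(x)}|\nabla w_\eps|^p=O(\rho^d)$, so $\rho^{-d}\int_A|\nabla w_\eps|^p$ is $O(1)$, not $O(t)$ as you assert, and it does not vanish when $t\to 0$. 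The cross term is worse: Poincar\'e yields $\int_{\Q_\rho(x)}|w_\eps-u_x|^p=O(\rho^{d+p})$, while $|\nabla\varphi|^p\sim (t\rho)^{-p}$, producing a factor $t^{-p}$ that blows up. The paper sidesteps this by choosing $v_\eps$ near-optimal for $\M_\eps(u_x;\cdot)$ on the \emph{inner} cube $\Q_{s\rho}(x)$, $s\in(0,1)$; then $v_\eps\equiv u_x$ on the annulus $\Q_\rho(x)\setminus\overline{\Q_{s\rho}(x)}$, the transition function there is $w=\phi u_x+(1-\phi)u$, and $\nabla w$ involves only $\nabla u(x)$, $\nabla u$ and $(u_x-u)/\rho$, all controlled at the chosen Lebesgue point. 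The main term becomes $\M_\eps(u_x;\Q_{s\rho}(x))/(s\rho)^d$, harmless after $\liminf_\rho$ and $s\to 1$.

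For~\ref{H1} there is a second point you pass over. The annulus term now carries $\int_A|\nabla u_\eps|^p$, and the only global information is $\sup_\eps F_\eps(u_\eps;O)<\infty$. The paper extracts a subsequence so that the measures $F_\eps(u_\eps;\cdot)\stackrel{\ast}{\rightharpoonup}\mu_0$, restricts attention to points $x$ with $D_\lambda\mu_0(x)<\infty$, and uses Alexandrov's theorem to bound $\rho^{-d}F_\eps(u_\eps;\Sigma_\rho(x))$ from above by $(s^d-1)D_\lambda\mu_0(x)$ after $\eps\to 0$ and $\rho\to 0$. Without this weak-$\ast$ compactness and differentiation step, invoking the ``correct order of limits'' is not enough to kill the gradient of $u_\eps$ on the shrinking annulus.
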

\begin{proof} Since \ref{L1} we see that  
\begin{align*}
{\rm M}_\FF(O)=W^{1,p}(\Omega;\RR^m).
\end{align*}
Fix $u\in W^{1,p}(\Omega;\RR^m)$. Following Theorem~\ref{H-theorem} it is enough to show that~\ref{H1} and~\ref{H2} hold.

\medskip

We begin by showing~\ref{H2}. Fix $x\in O$ such that
\begin{align}
&\lim_{r\to 0}\fint_{\Q_r(x)}\vert  u\vert^pdy=\vert u(x)\vert^p\sinf\infty;\label{negli0}\\
&\lim_{r\to 0}\fint_{\Q_r(x)}\vert \nabla u\vert^pdy=\vert \nabla u(x)\vert^p\sinf\infty;\label{negli1}\\
&\lim_{r\to 0}\frac{1}{r^p}\fint_{\Q_r(x)}\vert u_x-u\vert^pdy=0;\label{negli2}\\
&\lim_{r\to 0}\frac{\nu(\Q_r(x))}{r^d}=D_\lambda \nu(x)\sinf\infty.\label{negli4}
\end{align}
Fix $\eps\ssup 0, s\in ]0,1[$ and $\rho\ssup 0$. Let $\phi\in W^{1,\infty}_0(\Q_\rho(x);[0,1])$ be a cut-off function between $\overline{\Q}_{s\rho}(x)$ and ${\Q}_{\rho}(x)$ (i.e., $\phi=1$ on $\overline{\Q}_{s\rho}(x)$ and $\phi=0$ on $O\setminus \Q_{s\rho}(x)$) such that
\begin{align*}
\Vert \nabla\phi\Vert_{L^\infty({\Q}_{\rho}(x))}\le \frac{4}{\rho(1-s)}.
\end{align*}
Let $v_\eps\in u_x+W^{1,p}_0(\Q_{s\rho}(x);\RR^m)$ be such that
\begin{align}\label{eps-min-rec1}
F_\eps(v_\eps;\Q_{s\rho}(x))\le\eps(s\rho)^d+\M_\eps(u_x; \Q_{s\rho}(x)).
\end{align}
Set $w:=\phi v_\eps+(1-\phi)u$, we have $w\in u+W^{1,p}_0(\Q_\rho(x);\RR^m)$ and
\[
\nabla w:=\left\{
\begin{array}{cl}
 \nabla v_\eps& \text{ in } \Q_{s\rho}(x)  \\ \\
 \phi\nabla u(x)+(1-\phi)\nabla u+\nabla \phi\otimes(u_x-u) &      \text{ in } \Sigma_\rho(x)
\end{array}
\right.
\]
where $\Sigma_\rho(x):=\Q_{\rho}(x)\setminus\overline{\Q}_{s\rho}(x)$. 
We have 
\begin{align*}
\M_{\eps}(u;\Q_{\rho}(x))&= \M_\eps(w;\Q_{\rho}(x))\\
&\le \M_{\eps}(w; \Q_{s\rho}(x))+\M_\eps(w; \Sigma_\rho(x))\\
&\le F_\eps(v_\eps;\Q_{s\rho}(x))+\M_\eps(w; \Sigma_\rho(x))\\
&\le \eps(s\rho)^d+\M_\eps(u_x; \Q_{s\rho}(x))+\M_\eps(w; \Sigma_\rho(x))
\end{align*}
since Lemma~\ref{lemma-sub-sup-m} and~\eqref{eps-min-rec1}.
It follows that
\begin{align}\label{cut-off-ineq}
\frac{\M_{\eps}(u;\Q_{\rho}(x))}{\rho^d}\le \eps s^d+ s^d\frac{\M_{\eps}(u_x; \Q_{s\rho}(x))}{(s\rho)^d}+\frac{\M_\eps(w; \Sigma_\rho(x))}{\rho^d}.
\end{align}
We claim that~\ref{H2} is proved if 
\begin{align}\label{rest-local0}
\limsup_{s\to 1}\limsup_{\rho\to 0}\limsup_{\eps\to 0}\frac{\M_\eps(w; \Sigma_\rho(x))}{\rho^d}=0.
\end{align}
Indeed, passing to the limits $\eps\to 0$, $\rho\to 0$, $s\to 1$ in~\eqref{cut-off-ineq} we have
\begin{align}\label{rest-local10}
\liminf_{\rho\to 0}\frac{\M_{+}(u;\Q_{\rho}(x))}{\rho^d}&\le \liminf_{s\to 1^-}\liminf_{\rho\to 0}\frac{\M_{+}(u_x;\Q_{s\rho}(x))}{(s\rho)^d}
\\&\le\limsup_{\rho\to 0}\frac{\M_{+}(u_x;\Q_{\rho}(x))}{\rho^d}.\notag
\end{align}

\bigskip

So, it remains to prove~\eqref{rest-local0}. Using \ref{L1} we have for some $C\ssup 0$ dependent on $p$ only
\begin{align*}
&\frac{\M_\eps(w; \Sigma_\rho(x))}{\rho^d}\\
&\le 
\beta(\frac{\nu( \Sigma_\rho(x))}{\rho^d}+\frac{1}{\rho^d}\int_{\Sigma_\rho(x)} \left\vert \phi\nabla u(x)+(1-\phi)\nabla u+\nabla \phi\otimes(u_x-u)\right\vert^pdy)\\
&+\frac{\beta}{\rho^d}\int_{\Sigma_\rho(x)} \left\vert \phi u_x+(1-\phi)u\right\vert^pdy\\
&\le C\beta(\frac{\nu( \Sigma_\rho(x))}{\rho^d}+(1-s^d)\vert \nabla u(x)\vert^p+\fint_{\Q_{\rho}(x)}\vert \nabla u\vert^pdy-s^d\fint_{\Q_{s\rho}(x)}\vert \nabla u\vert^pdy)\\
&+C\beta(\frac{4^p}{(1-s)^p}(\frac{1}{\rho^p}\fint_{\Q_{\rho}(x)}\vert u_x-u\vert^pdy-\frac{s^{d+p}}{(s\rho)^p}\fint_{\Q_{s\rho}(x)}\vert u_x-u\vert^pdy))\\
&+C\beta\rho^p(\frac{1}{\rho^p}\fint_{\Q_\rho(x)}\vert u_x-u\vert^pdy-\frac{s^{d+p}}{(s\rho)^p}\fint_{\Q_{s\rho}(x)}\vert u_x-u\vert^pdy)\\
&+C\beta(\fint_{\Q_\rho(x)}\vert u\vert^p-s^d \fint_{\Q_{s\rho}(x)}\vert u\vert^p).
\end{align*}
Taking~\eqref{negli0},~\eqref{negli1},~\eqref{negli2} and \eqref{negli4} into account and passing to the limits $\eps\to 0$ then $\rho\to 0$ we obtain 
\begin{align*}
\limsup_{\rho\to 0}\limsup_{\eps\to 0}\frac{\M_\eps(w; \Sigma_\rho(x))}{\rho^d}\le C\beta(1-s^d)(D_\lambda \nu(x)+\vert u(x)\vert^p+\vert \nabla u(x)\vert^p).
\end{align*}
\bigskip
Letting $s\to 1$ we obtain~\eqref{rest-local0}.

Let us prove~\ref{H1} now. Consider a sequence $\{\varphi_\eps\}_\eps\subset W^{1,p}(\Omega;\RR^m)$ such that $\varphi_{\eps}\to 0$ in $L^p(\Omega;\RR^m)$ as $\eps\to 0$ and satisfying $\sup_{\eps>0} F_{\eps}(u+\varphi_\eps;\Omega)\sinf\infty$. Set $\mu_\eps(\cdot):=F_\eps(u+\varphi_\eps;\cdot)$ for any $\eps\ssup 0$. There exists a subsequence (not relabeled) and a nonnegative Radon measure $\mu_0$ such that 
\begin{align}\label{weak-convergence}
\mu_\eps\stackrel{\ast}{\wto}\mu_0.
\end{align}
Fix $\eps\ssup 0, s\in ]1,2[$ and $\rho\ssup 0$. Fix $x\in O$ such that~\eqref{negli0},~\eqref{negli1},~\eqref{negli2} and~\eqref{negli4} hold and
\begin{align}
D_\lambda \mu_0(x):=\lim_{r\to 0}\frac{\mu_0(\Q_r(x))}{r^d}\sinf\infty.\label{negli1prime}
\end{align}
Let $\phi\in W^{1,\infty}_0(\Q_{s\rho}(x);[0,1])$ be a cut-off function between $\overline{\Q}_{\rho}(x)$ and ${\Q}_{s\rho}(x)$ such that
\begin{align*}
\Vert \nabla\phi\Vert_{L^\infty({\Q}_{s\rho}(x))}\le \frac{4}{\rho(s-1)}.
\end{align*}
Let $v_\eps\in (u+\varphi_\eps)+W^{1,p}_0(\Q_{\rho}(x);\RR^m)$ be such that
\begin{align}\label{eps-min-rec2}
F_\eps(v_\eps;\Q_{\rho}(x))\le\eps\rho^d+\M_\eps(u+\varphi_\eps; \Q_{\rho}(x)).
\end{align}
Set $w:=\phi v_\eps+(1-\phi)u_x$, we have $w\in u_x+W^{1,p}_0(\Q_{s\rho}(x);\RR^m)$ and
\begin{align*}
\nabla w:=\left\{
\begin{array}{cl}
 \nabla v_\eps& \text{ in } \Q_{\rho}(x)  \\ \\
 \phi(\nabla u+\nabla \varphi_\eps)+(1-\phi)\nabla u(x)+\nabla \phi\otimes(u+\varphi_\eps-u_x) &      \text{ in }\Sigma_{\rho}(x)
\end{array}
\right.
\end{align*}
where $\Sigma_{\rho}(x):=\Q_{s\rho}(x)\setminus\overline{\Q}_\rho(x)$. We have
\begin{align}\label{cut-off-ineq-prime}
s^d\frac{\M_{\eps}(u_x;\Q_{s\rho}(x))}{(s\rho)^d}&= s^d\frac{\M_\eps(w;\Q_{s\rho}(x))}{(s\rho)^d}\\
&\le \frac{\M_\eps(w;\Q_{\rho}(x))}{\rho^d}+ \frac{\M_\eps(w;\Sigma_{\rho}(x))}{\rho^d}\notag\\
&\le \frac{F_\eps(v_\eps;\Q_{\rho}(x))}{\rho^d}+\frac{\M_\eps(w;\Sigma_{\rho}(x))}{\rho^d}\notag\\
&\le \eps+\frac{\M_\eps(u+\varphi_\eps; \Q_{\rho}(x))}{\rho^d}+\frac{\M_\eps(w;\Sigma_{\rho}(x))}{\rho^d}\notag\\
&\le\eps+\frac{F_\eps(u+\varphi_\eps; \Q_{\rho}(x))}{\rho^d}+\frac{\M_\eps(w;\Sigma_{\rho}(x))}{\rho^d}\notag
\end{align}
since Lemma~\ref{lemma-sub-sup-m} and~\eqref{eps-min-rec2}. We claim that~\ref{H1} is proved if 
\begin{align}\label{rest-local1}
\limsup_{s\to 1}\limsup_{\rho\to 0}\limsup_{\eps\to 0}\frac{\M_\eps(w; \Sigma_\rho(x))}{\rho^d}=0.
\end{align}
Indeed, passing to the limits $\eps\to 0$, $\rho\to 0$, $s\to 1$ in~\eqref{cut-off-ineq-prime} 
we have
\begin{align}\label{rest-local11}
\limsup_{\rho\to 0}\liminf_{\eps\to 0}\frac{F_\eps(u+\varphi_\eps;\Q_{\rho}(x))}{\rho^d}&\ge \limsup_{s\to 1}\limsup_{\rho\to 0}\frac{\M_{-}(u_x;\Q_{s\rho}(x))}{(s\rho)^d}\\
&\ge\liminf_{\rho\to 0}\frac{\M_{-}(u_x;\Q_{\rho}(x))}{\rho^d}.\notag
\end{align}

\bigskip

So, it remains to prove~\eqref{rest-local1}. Using \ref{L1} we have for some $C\ssup 0$ dependent on $p$ only
\begin{align}
&\frac{\M_\eps(w;\Sigma_{\rho}(x))}{\rho^d}\label{eq-rest-prime}\\
&\le \beta\frac{1}{\rho^d}\int_{\Sigma_\rho(x)} \left\vert \phi (u+\varphi_\eps)+(1-\phi)u_x\right\vert^p dy\notag\\
&+ C\beta((s^d-1)\vert\nabla u(x)\vert^p+\frac{\nu(\Sigma_{\rho}(x))}{\rho^d})\notag\\
&+C\beta(\frac{1}{\rho^d}\int_{\Sigma_\rho(x)} \left\vert \nabla u+\nabla\varphi_\eps\right\vert^p +\frac{1}{\rho^d}\int_{\Sigma_\rho(x)}\left\vert\nabla \phi\otimes(u+\varphi_\eps-u_x)\right\vert^pdy)\notag\\
&\le C\beta((s^d-1)\vert\nabla u(x)\vert^p+\frac{\nu(\Sigma_{\rho}(x))}{\rho^d}+\frac{1}{\alpha}\frac{1}{\rho^d}F_\eps(u+\varphi_\eps;{\Sigma_\rho(x)}))\notag\\
&+C\beta\frac{2^{3p-1}}{(s-1)^p}
(s^{d+p}\frac{1}{(s\rho)^p}\fint_{\Q_{s\rho}(x)}\left\vert u_x-u\right\vert^pdy-\frac{1}{\rho^p}\fint_{\Q_{\rho}(x)}\left\vert u_x-u\right\vert^pdy)\notag\\
&+C\beta\frac{2^{3p-1}}{(s-1)^p}
(s^{d+p}\frac{1}{(s\rho)^p}\fint_{\Q_{s\rho}(x)}\left\vert \varphi_\eps\right\vert^pdy-\frac{1}{\rho^p}\fint_{\Q_{\rho}(x)}\left\vert \varphi_\eps\right\vert^pdy)\notag\\
&+C\beta\rho^p(\frac{1}{\rho^p}\fint_{\Q_\rho(x)}\vert u_x-u\vert^pdy-\frac{s^{d+p}}{(s\rho)^p}\fint_{\Q_{s\rho}(x)}\vert u_x-u\vert^pdy)\notag\\
&+C\beta(\fint_{\Q_\rho(x)}\vert u\vert^p-s^d \fint_{\Q_{s\rho}(x)}\vert u\vert^p)+C\beta\frac{1}{\rho^d}\int_{\Sigma_\rho(x)}\left\vert \varphi_\eps\right\vert^pdy.\notag
\end{align}
Using \eqref{weak-convergence} and Alexandrov theorem we have
\begin{align*}
\limsup_{\eps\to 0}\frac{1}{\rho^d}F_\eps(u+\varphi_\eps;{\Sigma_\rho(x)})&=\limsup_{\eps\to 0}\frac{\mu_\eps({\Sigma_\rho(x)})}{\rho^d}\\
&\le \limsup_{\eps\to 0}\frac{\mu_\eps({\overline{\Sigma}_\rho(x)})}{\rho^d}\\
&\le \frac{\mu_0({\overline{\Sigma}_\rho(x)})}{\rho^d}\\
&\le s^d \frac{\mu_0(\overline{\Q}_{s\rho}(x))}{(s\rho)^d}-\frac{\mu_0(\Q_{\rho}(x))}{\rho^d}.
\end{align*}
Letting $\rho\to 0$ we deduce by using~\eqref{negli1prime}
\begin{align}\label{eq-intermediate}
\limsup_{\rho\to 0}\limsup_{\eps\to 0}\frac{1}{\rho^d}F_\eps(u+\varphi_\eps;{\Sigma_\rho(x)})\le (s^d-1)D_\lambda \mu_0(x).
\end{align}
Taking~\eqref{negli0},~\eqref{negli1},~\eqref{negli2},~\eqref{negli4} and~\eqref{eq-intermediate} into account and passing to the limits $\eps\to 0$ then $\rho\to 0$ in~\eqref{eq-rest-prime} we obtain
\begin{align*}
\limsup_{\rho\to 0}\limsup_{\eps\to 0}\frac{\M_\eps(w;\Sigma_{\rho}(x))}{\rho^d}\le C\beta(s^d-1)(D_\lambda\nu(x)+\vert u(x)\vert^p+\vert\nabla u(x)\vert^p+D_\lambda \mu_0(x))
\end{align*}
since $\varphi_{\eps}\to 0$ in $L^p(\Omega;\RR^m)$ as $\eps\to 0$. Passing to the limit $s\to 1$ we finally proved~\eqref{rest-local1}.\mbox{\qedhere}
\end{proof}

\bigskip

As an illustration of Theorem~\ref{p-growth-theorem} we give two elementary examples.
\begin{example}[Integrands ``almost" nondecreasing] For each $\eps\ssup0$ we consider $L_\eps:\MM^{m\times d}\to[0,\infty[$ a Borel measurable function such that
\begin{hypG}[resume]
\item\label{L2} $\displaystyle\exists \gamma\ge 0\quad\exists\delta\ssup0\quad\forall \eps\ssup0\quad\forall \eta\in ]0,\eps[\quad\forall (x,v,\xi)\in\Omega\times\RR^d\times\MM^{m\times d}$\[\f_\eps(\xi)\le \f_\eta(\xi)+\gamma\vert \eps-\eta\vert^\delta.\]
\end{hypG}
Note that if $\gamma=0$ then $\eps\mapsto \f_\eps(\cdot)$ is nondecreasing when $\eps$ is decreasing.

\medskip

We define $F_\eps:L^p(\Omega;\RR^m)\times\O(\Omega)\to [0,\infty]$ by 
\begin{align*}
F_\eps(u;O):=\int_O L_\eps(\nabla u(x))dx.
\end{align*}
Then it is easy to see that~\ref{H0} holds. If we assume~\eqref{p-growth-p-coercivity} then~\ref{L1} holds.
\end{example}
\begin{example}[Constant integrands with perturbation] Let $W:\Omega\times\RR^d\times\MM^{m\times d}\to [0,\infty[$ be a Borel measurable integrand satisfying $p$-growth and $p$-coercivity~\eqref{p-growth-p-coercivity} and~\ref{H0}. Let $\{\Phi_\eps\}_\eps\subset L^1(\Omega;\RR^+)$ such that 
\begin{hyp2}
\item\label{uniform-bound-l1-example} there exists $g\in L^1(\Omega)$ such that $\Phi_\eps(x)\le g(x)$ for all $x\in\Omega$ and all $\eps\ssup0$;\\

\item\label{uniform-bound-l1-example1} there exists a nonnegative Borel measure $\Phi_0$ such that
\begin{align}\label{weak-to-measure-example0}
\Phi_\eps(\cdot)\lambda\stackrel{\ast}{\wto}\Phi_0\quad\mbox{ as }\eps\to 0.
\end{align}
\end{hyp2}
For each $\eps\ssup0$ we set $\f_\eps:\Omega\times\RR^d\times\MM^{m\times d}\to [0,\infty[$ defined by
\begin{align*}
\f_\eps(x,v,\xi)=W(x,v,\xi)+\Phi_\eps(x).
\end{align*}
Then for each $O\in\O(\Omega)$ the family $\FF(\cdot;O)$ $\Gammap$-converges to 
\begin{align*}
\FF_0(u;O)=\int_O W_{0}(x,u(x),\nabla u(x))+D_\lambda\Phi_0(x)dx.
\end{align*}
Indeed, we have that \ref{L1} holds because of the $p$-growth of $W$ and~\ref{uniform-bound-l1-example}. Now, we have for almost all $x\in \Omega$
\begin{align*}
\lim_{\rho\to 0}\lim_{\eps\to 0}\fint_{\Q_\rho(x)}\Phi_\eps(y)dy=D_\lambda\Phi_0(x)
\end{align*}
since~\eqref{weak-to-measure-example0}. We can see that for every $x\in\Omega$, every $\eps\ssup0$, every $\rho\ssup0$ and every $u\in W^{1,p}(\Omega;\RR^m)$
\begin{align*}
&\inf\left\{\fint_{\Q_\rho(x)} W(y,w(y),\nabla w(y))dy:w\in u_x+W^{1,p}_0(\Q_\rho(x);\RR^m)\right\}\\
&+\fint_{\Q_\rho(x)}\Phi_\eps(y)dy.
\end{align*}
It means that {\rm (${\mathscr H}$)} holds and Theorem~\ref{abstract-homogen} applies with
\begin{align*}
\f_0(x,u(x),\nabla u(x))=W_{0}(x,u(x),\nabla u(x))+D_\lambda\Phi_0(x).
\end{align*}
We give a concrete example. Assume that $\Omega=B_1(0)\subset \RR^d$ the euclidean open ball with center $0$ and radius $1$. Let $g:\Omega\to [0,\infty]$ be defined by
\[
g(x):=\left\{
\begin{array}{cl}
 \frac{2}{\sqrt{\Vert x\Vert}} & \mbox{ if }x\in \Omega\setminus\{0\}      \\
 \infty &      \mbox{ if }x=0.
\end{array}
\right.
\]
where $\Vert\cdot\Vert$ is the euclidean norm. Then $g\in L^1(\Omega)$. For each $\eps\ssup0$ we set for every $x\in\Omega$
\begin{align*}
\Phi_\eps(x):=\frac{1}{\sqrt{\eps}}\mathds{1}_{B_\eps(0)}(x)+h(x)
\end{align*}
where $h\in L^1(\Omega)$ and satisfies $h(x)\le\frac12 g(x)$ for all $x\in\Omega$. Then~\ref{uniform-bound-l1-example} and ~\ref{uniform-bound-l1-example1} hold with 
\begin{align*}
\Phi_\eps(\cdot)\lambda\stackrel{\ast}{\wto}\Phi_0:=\delta_0+h\lambda\quad\mbox{ as }\eps\to 0
\end{align*}
where $\delta_0$ is the dirac measure at $0$.
It follows that 
\begin{align*}
D_\lambda\Phi_0(x)=h(x)\mbox{ a.e. in }\Omega.
\end{align*}
\end{example}
\subsection{Homogenization}\label{sub-homog}
Let $L:\RR^d\times\MM^{m\times d}\to[0,\infty[$ be a Borel measurable function which is $p$-coercive, i.e., there exists $\alpha\ssup0$ such that 
\begin{align*}
\alpha\vert\xi\vert^p\le \f(x,\xi)
\end{align*}
for all $(x,\xi)\in\Omega\times\MM^{m\times d}$. For each $\eps\ssup0$ we consider $F_\eps:W^{1,p}(\Omega;\RR^m)\times\O(\Omega)\to [0,\infty]$ given by 
\begin{align*}
F_\eps(u;O):=\int_O \f(\frac{x}{\eps},\nabla u(x))dx.
\end{align*}
The family $\FF=\{F_\eps\}_\eps\subset\I(p,\alpha)$. For each $\xi\in\MM^{m\times d}$ we define $\S_\xi:\O(\Omega)\to [0,\infty]$ a set function by 
\begin{align*}
\S_\xi^\f(O):=\inf\left\{\int_O \f(y,\xi+\nabla\varphi(y))dy:\varphi\in W^{1,p}_0(O;\RR^m)\right\}.
\end{align*}
\begin{definition} We say that $\f$ is an {\em ${\rm H}$-integrand} ({\rm H} stands for ``homogenizable") if

\bigskip

\begin{enumerate}
\item[{\rm (${\mathscr H}$)}]\label{L3} $\displaystyle\forall \xi\in\MM^{m\times d}\qquad
\limsup_{\rho\to 0}\limsup_{t\to\infty}\frac{\S_\xi^\f(t\Q_{\rho}(x))}{\lambda(t\Q_\rho(x))}=\liminf_{\rho\to 0}\liminf_{t\to\infty}\frac{\S_\xi^\f(t\Q_{\rho}(x))}{\lambda(t\Q_\rho(x))}\quad\mbox{a.e. in }\Omega.\\
$
\end{enumerate}
In this case we denote the common value by $\f_{\rm hom}(x,\xi)$. 
\end{definition}

We see that {\rm (${\mathscr H}$)} implies~\ref{H0}, indeed, for every $u\in{\rm M}_\FF(O)$ we have
\begin{align*}
\frac{\S_{\nabla u(x)}^\f(\frac{1}{\eps} \Q_\rho(x))}{\lambda(\frac{1}{\eps} \Q_\rho(x))}=\frac{\M_\eps(u_x\cdot;\Q_\rho(x))}{\rho^d}.
\end{align*}
for all $\eps\ssup0$ and all $x\in O$. So, we can deduce from Theorem~\ref{p-growth-theorem} the following result.
\begin{theorem}\label{abstract-homogen} If~\ref{L1} holds and $\f$ is an {\rm H}-integrand, i.e., {\rm (${\mathscr H}$)} holds. Then for each $O\in\O(\Omega)$ the family $\FF(\cdot;O)$ $\Gammap$-converges at every $u\in W^{1,p}(\Omega;\RR^m)$ to 
\begin{align*}
\FF_0(u,O)=\int_O \f_{\rm hom}(x,\nabla u(x))dx
\end{align*}
where 
\begin{align*}
\f_{\rm hom}(x,\xi)=\f_0(x,\xi)=\limsup_{\rho\to 0}\limsup_{t\to\infty}\frac{\S_\xi^\f(t\Q_{\rho}(x))}{\lambda(t\Q_\rho(x))}.
\end{align*}
for all $x\in O$ and $\xi\in\MM^{m\times d}$.
\end{theorem}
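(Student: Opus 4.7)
The plan is to apply Theorem~\ref{p-growth-theorem} to the family $\FF=\{F_\eps\}_\eps$. Since $L$ is Borel measurable and $p$-coercive, the functional $F_\eps$ defined by an integral satisfies the global conditions~\ref{C0},~\ref{C1},~\ref{C2}, so $\FF\subset\I(p,\alpha)$. The growth condition~\ref{L1} is part of the hypothesis. What remains is to verify the local condition~\ref{H0} for every $u\in W^{1,p}(\Omega;\RR^m)$ and then to identify the integrand $\f_0$ produced by~\eqref{generic-formula} with $\f_{\rm hom}$.

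To verify~\ref{H0}, fix $u\in W^{1,p}(\Omega;\RR^m)$ and recall the scaling identity noted just before the theorem: for every $\eps>0$, every $\rho>0$, every $x\in O$ and $\xi:=\nabla u(x)$, setting $t=1/\eps$,
\begin{align*}
\frac{\M_\eps(u_x;\Q_\rho(x))}{\rho^d}=\frac{\S_{\xi}^\f(t\Q_\rho(x))}{\lambda(t\Q_\rho(x))}.
\end{align*}
This comes from the change of variables $z=\eps y$, which identifies $u_x+W^{1,p}_0(\Q_\rho(x);\RR^m)$ with $\xi(\cdot)+W^{1,p}_0(t\Q_\rho(x);\RR^m)$ modulo affine shifts, and converts $\f(z/\eps,\nabla v(z))$ into $\f(y,\xi+\nabla\varphi(y))$. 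Applying $\limsup_{\eps\to 0}$ and $\liminf_{\eps\to 0}$ (equivalent to $\limsup_{t\to\infty}$ and $\liminf_{t\to\infty}$) to both sides gives
\begin{align*}
\frac{\M_+(u_x;\Q_\rho(x))}{\rho^d}=\limsup_{t\to\infty}\frac{\S_{\xi}^\f(t\Q_\rho(x))}{\lambda(t\Q_\rho(x))},\qquad \frac{\M_-(u_x;\Q_\rho(x))}{\rho^d}=\liminf_{t\to\infty}\frac{\S_{\xi}^\f(t\Q_\rho(x))}{\lambda(t\Q_\rho(x))}.
\end{align*}
Condition (${\mathscr H}$) evaluated at $\xi=\nabla u(x)$ then yields, at a.e. $x\in O$,
\begin{align*}
\limsup_{\rho\to 0}\frac{\M_+(u_x;\Q_\rho(x))}{\rho^d}=\liminf_{\rho\to 0}\frac{\M_-(u_x;\Q_\rho(x))}{\rho^d},
\end{align*}
which is (strictly) stronger than~\ref{H0}.

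Theorem~\ref{p-growth-theorem} now applies and produces $\Gammap$-convergence of $\FF(\cdot;O)$ at every $u\in W^{1,p}(\Omega;\RR^m)={\rm M}_\FF(O)$ to the integral functional $\int_O\f_0(x,\nabla u(x))dx$, with $\f_0$ given by~\eqref{generic-formula}. The same scaling identity, applied one more time, gives
\begin{align*}
\f_0(x,\nabla u(x))=\limsup_{\rho\to 0}\frac{\M_+(u_x;\Q_\rho(x))}{\rho^d}=\limsup_{\rho\to 0}\limsup_{t\to\infty}\frac{\S_{\nabla u(x)}^\f(t\Q_\rho(x))}{\lambda(t\Q_\rho(x))}=\f_{\rm hom}(x,\nabla u(x)),
\end{align*}
which is the stated representation. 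The main technical point to unravel is the transition from ``for each fixed $\xi$, the equality in (${\mathscr H}$) holds at $\lambda$-a.e. $x$'' to ``the equality holds at a.e. $x$ when evaluated at the measurable field $\xi=\nabla u(x)$''; this is the only nonalgebraic step, and it is the place where one must use that the null set exceptional in (${\mathscr H}$) can be chosen uniformly enough along the range of $\nabla u$, which is the delicate content hidden in the definition of an \emph{H-integrand}.
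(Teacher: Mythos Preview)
Your argument is exactly the paper's: the paper simply observes the scaling identity $\frac{\M_\eps(u_x;\Q_\rho(x))}{\rho^d}=\frac{\S_{\nabla u(x)}^\f(\frac1\eps\Q_\rho(x))}{\lambda(\frac1\eps\Q_\rho(x))}$, asserts that (${\mathscr H}$) therefore implies~\ref{H0}, and invokes Theorem~\ref{p-growth-theorem}. The quantifier issue you flag in your last paragraph---passing from ``for each $\xi$, a.e.\ $x$'' to ``a.e.\ $x$ at $\xi=\nabla u(x)$''---is a genuine subtlety, but the paper does not address it either; it simply takes (${\mathscr H}$)$\Rightarrow$\ref{H0} for granted, so your proof is at least as complete as the original.
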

Theorem~\ref{abstract-homogen} becomes a ``classical" homogenization result when $\f_{\rm hom}$ does not depend on $x$. For instance, when $\f$ is $1$-periodic or almost periodic with respect to the first variable then by subadditive theorems \cite[Theorem 2.1 and Theorem 3.1]{licht-michaille02} the condition {\rm (${\mathscr H}$)} holds, i.e., $\f$ is an {\rm H}-integrand, and we have
\begin{align}
\f_{\rm hom}(\xi)&=\inf_{n\in\NN}\frac{\S_\xi^\f(nY)}{n^d} &&\quad\mbox{(periodic case)}\label{homog-formula-periodic}\\
\f_{\rm hom}(\xi)&=\lim_{n\to\infty}\frac{\S_\xi^\f(nY)}{n^d} &&\quad\mbox{(almost-periodic case)}.
\end{align}
\begin{example}[Periodic integrand with perturbation] Consider $W:\RR^d\times\MM^{m\times d}\to [0,\infty[$ be a Borel measurable function $1$-periodic with respect to the first variable, i.e., 
\begin{align*}
\forall x\in\RR^d\quad\forall z\in\ZZ^d\quad\forall \xi\in\MM^{m\times d}\qquad W(x+z,\xi)=W(x,\xi),
\end{align*}
and satisfying $p$-growth and $p$-coercivity, i.e, there exist $\alpha,\beta\ssup0$ such that 
\begin{align*}
\forall (x,\xi)\in\RR^d\times\MM^{m\times d}\qquad \alpha\vert \xi\vert^p\le W(x,\xi)\le \beta(1+\vert \xi\vert^p).
\end{align*}
Let $\Phi\in L^1_{\rm loc}(\RR^d;\RR^+)$ such that 
\begin{hyp2}
\item\label{uniform-bound-l1-example-bis} there exists $g\in L^1_{\rm loc}(\RR^d)$ such that $\Phi(\frac{x}{\eps})\le g(x)$ for all $x\in\Omega$ and all $\eps\ssup0$;\\

\item\label{uniform-bound-l1-example-bis1} there exists a nonnegative Borel measure $\Phi_0$ such that
\begin{align}\label{weak-to-measure-example}
\Phi(\frac{\cdot}{\eps})\lambda\stackrel{\ast}{\wto}\Phi_0\quad\mbox{ as }\eps\to 0.
\end{align}
\end{hyp2}
Let $\f:\RR^d\times\MM^{m\times d}\to [0,\infty[$ be defined by
\begin{align*}
\f(x,\xi)=W(x,\xi)+\Phi(x).
\end{align*}
Note that $\f$ is not periodic with respect to the first variable, because of the ``perturbation" $\Phi$.

\medskip

We consider the family $\FF=\{F_\eps\}_\eps\subset\I(p,\alpha)$ given by
\begin{align*}
F_\eps(u;O):=\int_O \f(\frac{x}{\eps},\nabla u(x))dx
\end{align*}
for all $(u,O)\in W^{1,p}(\Omega;\RR^m)\times\O(\Omega)$. Then $\FF(\cdot;O)$ $\Gammap$-converges to 
\begin{align*}
\FF_0(u;O)=\int_O W_{\rm hom}(\nabla u(x))+D_\lambda\Phi_0(x)dx
\end{align*}
for all $u\in W^{1,p}(\Omega;\RR^m)\times\O(\Omega)$, and where $W_{\rm hom}(\xi)$ is given by the formula~\eqref{homog-formula-periodic} with $\S_\xi^W$ in place of $\S_\xi^\f$.
Indeed, \ref{L1} holds because of the $p$-growth of $W$ and~\ref{uniform-bound-l1-example-bis}. Now, we have for almost all $x\in \Omega$
\begin{align*}
\lim_{\rho\to 0}\lim_{\eps\to 0}\fint_{\Q_\rho(x)}\Phi(\frac{y}{\eps})dy=D_\lambda\Phi_0(x)
\end{align*}
since~\eqref{weak-to-measure-example}. Using \cite[Theorem 2.1]{licht-michaille02} we have for every $\xi\in\MM^{m\times d}$
\begin{align*}
W_{\rm hom}(\xi)+D_\lambda\Phi_0(x)=\limsup_{\rho\to 0}\limsup_{t\to\infty}\frac{\S_\xi^\f(t\Q_{\rho}(x))}{\lambda(t\Q_\rho(x))}=\liminf_{\rho\to 0}\liminf_{t\to\infty}\frac{\S_\xi^\f(t\Q_{\rho}(x))}{\lambda(t\Q_\rho(x))}\quad\mbox{a.e. in }\Omega,
\end{align*}
since we can see that for every $x\in\Omega$, every $t\ssup0$ and every $\rho\ssup0$
\begin{align*}
\frac{\S_\xi^\f(t\Q_{\rho}(x))}{\lambda(t\Q_\rho(x))}=&\inf\left\{\fint_{t\Q_\rho(x)} W(y,\xi+\nabla\varphi(y))dy:\varphi\in W^{1,p}_0(t\Q_\rho(x);\RR^m)\right\}\\
&+\fint_{t\Q_\rho(x)}\Phi(y)dy.
\end{align*}
It means that $\f$ is an {\rm H}-integrand and Theorem~\ref{abstract-homogen} apply with
\begin{align*}
\f_{\rm hom}(x,\xi)=W_{\rm hom}(\xi)+D_\lambda\Phi_0(x).
\end{align*}
\end{example}
\begin{remark} An interesting problem in the field of deterministic homogenization (see~\cite{nguetseng-nhang-woukeng10}) is the characterization of all {\rm H}-integrands $\f:\RR^d\times\MM^{m\times d}\to [0,\infty[$ Borel measurable with $p$-growth and $p$-coercivity, i.e., satisfying
\begin{eqnarray*}
&\exists \alpha\ssup0\quad\exists\beta\ssup0\quad\forall (x,\xi)\in\Omega\times\MM^{m\times d}\\ \\
&\alpha\vert \xi\vert^p\le \f(x,\xi)\le\beta(1+\vert\xi\vert^p).
\end{eqnarray*}
\end{remark}
\subsection{Relaxation}\label{subsection-relaxation}
The following result is an extension of Acerbi-Fusco-Dacorogna relaxation theorem (see~\cite[Theorem 9.8, p. 432]{dacorogna08} and~\cite[Statement III.7, p. 144]{acerbi-fusco84}) in the case where the integrand is assumed Borel measurable only.
\begin{theorem}\label{general-relaxation-theorem} If $\f:\Omega\times\RR^m\times\MM^{m\times d}\to [0,\infty[$ is Borel measurable and satisfies~\ref{H0} and~\eqref{p-growth-p-coercivity} then for every $O\in\O(\Omega)$
\begin{align*}
\FF_0(u;O)=\int_O {\f_0}(x,u(x),\nabla u(x))dx
\end{align*}
where for a.a. $x\in O$
\begin{align*}
&\f_0(x,u(x),\nabla u(x))\\
&=\lim_{\rho\to 0}\inf\left\{\fint_{\Q_\rho(x)}\f(y,w(y),\nabla w(y))dy:w\in u_x+W^{1,p}_0(\Q_\rho(x);\RR^m)\right\}.
\end{align*}
Moreover, if $\f$ is Carath\'eodory, i.e.,
\begin{hyp2}
\item\label{cara-meas} for each $(v,\xi)\in\RR^m\times\MM^{m\times d}$ the function $\Omega\ni x\mapsto \f(x,v,\xi)$ is measurable;
\item\label{cara-cont} for a.a. $x\in\Omega$ the function $\RR^m\times\MM^{m\times d}\ni (v,\xi)\mapsto \f(x,v,\xi)$ is continuous,
\end{hyp2}
then for almost every $x\in\Omega$ and for every $(v,\xi)\in\RR^d\times\MM^{m\times d}$
\begin{align}\label{reduc-formula-cara-relax}
{\ftild_0}(x,v,\xi)=\inf\left\{\int_Y \f(x,v,\xi+\nabla\varphi(y))dy:\varphi\in W^{1,\infty}_0(Y;\RR^m)\right\}.
\end{align}
\end{theorem}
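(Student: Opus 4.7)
For the first assertion, I would apply Theorem~\ref{p-growth-theorem} to the constant family $\{F_\eps = F\}_\eps$ with $F(u;O) := \int_O L(x,u(x),\nabla u(x))\,dx$. The growth/coercivity hypothesis~\eqref{p-growth-p-coercivity} places $\FF$ in $\mathcal{I}(p,\alpha)$ and ensures~\ref{L1} with $\nu := a\lambda$, so that ${\rm M}_\FF(O) = W^{1,p}(\Omega;\RR^m)$. Since~\ref{H0} is assumed, Theorem~\ref{p-growth-theorem} yields $\Gammap$-convergence of $\FF(\cdot;O)$ at every $u\in W^{1,p}(\Omega;\RR^m)$ with limit integrand given by~\eqref{generic-formula}. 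The substitution $v = u_x + \varphi$ with $\varphi \in W^{1,p}_0(\Q_\rho(x);\RR^m)$ rewrites $\M(u_x;\Q_\rho(x))/\rho^d$ as the announced average infimum, and Lemma~\ref{lemma-H-theorem} (applied to the constant family) forces the $\limsup_\rho$ to be a genuine limit a.e., yielding the claimed formula for $\f_0$.

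For the Carathéodory reduction~\eqref{reduc-formula-cara-relax}, which is essentially the content of Proposition~\ref{Reduction formula for Q_pf}, I would fix $(v,\xi)\in\RR^m\times\MM^{m\times d}$ and select $x\in\Omega$ enjoying three simultaneous good-point properties: (a) $x$ is a Lebesgue point of $y\mapsto \f(y,v,\xi+\eta)$ for every $\eta$ in a fixed countable dense subset of $\MM^{m\times d}$; (b) $x$ is a density point of a Scorza--Dragoni closed set $K\subset\Omega$ on which $(x,v,\xi)\mapsto \f(x,v,\xi)$ is continuous; (c) $x$ is a Lebesgue point of the $L^1$-dominator $a$. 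Such $x$ form a conegligible set. The rescaling $y = x+\rho z$, $\varphi(y) = \rho\psi(z)$ transforms the cube infimum into
\begin{align*}
\inf\Big\{\int_Y \f\bigl(x+\rho z,\; v+\rho(\xi z+\psi(z)),\; \xi+\nabla\psi(z)\bigr)\,dz : \psi\in W^{1,p}_0(Y;\RR^m)\Big\}.
\end{align*}

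The reduction then proceeds in two steps. First, pass from the $y$-dependent integrand to the frozen integrand $\f(x,v,\cdot)$: the $\le$ direction is obtained by testing with $\rho$-independent $\psi$ and invoking dominated convergence through (a)--(c) and the $p$-growth; the $\ge$ direction starts from a near-minimizing sequence $\{\psi_\rho\}$, which is bounded in $W^{1,p}_0(Y;\RR^m)$ by $p$-coercivity, and uses continuity of $\f$ on the Scorza--Dragoni set $K$ together with $p$-growth on its complement to transfer the energy to the frozen integrand in the limit $\rho\to 0$. Second, replace $W^{1,p}_0$ by $W^{1,\infty}_0$ via the Acerbi--Fusco Lipschitz truncation: given $\psi \in W^{1,p}_0(Y;\RR^m)$ admissible for the frozen problem, a truncation based on the Hardy--Littlewood maximal function of $|\nabla\psi|^p$ at level $k$ produces a Lipschitz competitor $\psi^k$ coinciding with $\psi$ outside a set of vanishing measure as $k\to\infty$, and the $p$-growth of $\f(x,v,\cdot)$ renders the truncation asymptotically energy-preserving. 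The reverse inclusion $W^{1,\infty}_0 \subset W^{1,p}_0$ is immediate.

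The principal obstacle is the simultaneous passage to the frozen integrand and to Lipschitz test functions while maintaining only Carathéodory regularity. Scorza--Dragoni alone gives uniform continuity of $\f$ off a small set, but without equi-integrability of $|\nabla\psi_\rho|^p$ one cannot neglect the complement's contribution. The Acerbi--Fusco truncation breaks this deadlock: performed after the freezing step, it produces uniformly Lipschitz competitors on which the continuity of $\f$ on $K$ can actually be exploited, so that the $\rho\to 0$ remainder is controlled quantitatively by the $p$-growth and the Lebesgue-point property of $a$, closing the proof of~\eqref{reduc-formula-cara-relax}.
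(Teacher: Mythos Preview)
Your argument for the first assertion is correct and coincides with the paper's: the integral representation is obtained by feeding the constant family into Theorem~\ref{p-growth-theorem} (equivalently Proposition~\ref{abstract-constant-dirichlet}), and the paper's own proof of Theorem~\ref{general-relaxation-theorem} simply cites Proposition~\ref{Reduction formula for Q_pf} for the Carath\'eodory reduction.

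For the Carath\'eodory reduction your route diverges from the paper's in the technical tools, though both arrive at~\eqref{reduc-formula-cara-relax}. The paper splits the task into two independent identities: $Q^{\rm dac}\f=\lim_{\rho\to0}\Lambda_\rho$ (the $W^{1,\infty}_0$ version of the rescaled problem) and $\Lambda_\rho=\f_\rho$ for each fixed $\rho$. For the first it uses Scorza--Dragoni plus, in the lower bound, Eisen's convergence-in-measure lemma applied to the pair $(g_n,\f(x,v,\xi+\nabla\psi_n))$; this sidesteps the equi-integrability of $|\nabla\psi_n|^p$ because only the \emph{difference} is shown to vanish in $L^1$. For the second it uses the strong density of $W^{1,\infty}_0$ in $W^{1,p}_0$ together with Vitali convergence. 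You instead propose Acerbi--Fusco Lipschitz truncation, which can legitimately absorb both steps at once: truncating the near-minimisers $\psi_\rho$ produces competitors with uniformly bounded gradients on which uniform continuity on the Scorza--Dragoni set acts directly, while the truncation remainder is controlled by $k^p\lambda(\{M(|\nabla\psi_\rho|)>k\})\to0$ along a subsequence. This is a valid alternative and arguably more self-contained, since it avoids invoking Eisen's lemma.

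Two small remarks. Your good-point condition~(a) (Lebesgue points of $y\mapsto\f(y,v,\xi+\eta)$ for countably many $\eta$) is superfluous once you have~(b) and~(c); the paper works with Scorza--Dragoni and the Lebesgue point of $a$ alone. More importantly, your final paragraph contains an ordering inconsistency: you first describe the truncation as ``performed after the freezing step'', yet you (correctly) want it to resolve the equi-integrability obstacle \emph{within} the freezing lower bound. The coherent version is to truncate the $\psi_\rho$ \emph{before} passing to the frozen integrand, not after; with that adjustment your strategy goes through.
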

\begin{proof} The formula~\eqref{reduc-formula-cara-relax} follows from Proposition~\ref{Reduction formula for Q_pf}. \mbox{\qedhere}
\end{proof}
\begin{remark} Under the same assumptions of Theorem~\ref{general-relaxation-theorem} and using Proposition~\ref{abstract-constant-dirichlet} we also have
\begin{align*}
\FF_0(u;O)=\FF^\D_0(u;O)=\int_O \f_0(x,u(x),\nabla u(x))dx
\end{align*}
for all $(u,O)\in W^{1,p}(\Omega;\RR^m)\times\O(\Omega)$.
\end{remark}

We can give an extension of $W^{1,p}$-quasiconvexity as follows. 
\begin{definition} We say that a Borel measurable integrand $\f:\Omega\times\RR^d\times\MM^{m\times d}\to [0,\infty]$ is $W^{1,p}$-quasiconvex if for every $(x,v,\xi)\in\Omega\times\RR^d\times\MM^{m\times d}$
\begin{align*}
{\ftild_0}(x,v,\xi)=\f(x,v,\xi).
\end{align*}
\end{definition}
However, when the integrand is dependent on $(x,v)$ this generalization of quasiconvexity is more difficult to handle. When the integrand $\f$ is Carath\'eodory the variables $x$ and $v$ can be frozen and we recover the classical concept of quasiconvexity.
\begin{proposition}\label{Reduction formula for Q_pf} If $L$ is Carath\'eodory and satisfies $p$-growth~\eqref{p-growth-p-coercivity} then for a.a. $x\in\Omega$ and for every $(v,\xi)\in\RR^m\times\MM^{m\times d}$ we have
\begin{align}\label{formula-eq-dac}
\ftild_0(x,v,\xi)=\inf\left\{\int_Y \f(x,v,\xi+\nabla\varphi(y))dy:\varphi\in W^{1,\infty}_0(Y;\RR^m)\right\}.
\end{align}
\end{proposition}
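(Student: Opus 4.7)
The strategy is to rescale the inner minimization onto the fixed unit cube $Y$ and then identify the resulting $\limsup$ with the classical quasiconvex envelope of $\f(x,v,\cdot)$ evaluated at $\xi$, using the Carath\'eodory hypothesis to freeze the first two variables. The first step is the change of variables $y=x+\rho z$, $\varphi(y)=\rho\psi(z)$ with $\psi\in W^{1,p}_0(Y;\RR^m)$, which rewrites~\eqref{formula-L0} as
\begin{align*}
\ftild_0(x,v,\xi)=\limsup_{\rho\to 0}\inf_{\psi\in W^{1,p}_0(Y;\RR^m)}\int_Y\f(x+\rho z,v+\rho\xi z+\rho\psi(z),\xi+\nabla\psi(z))\,dz.
\end{align*}
It then suffices to show, for a.a.\ $x\in\Omega$ and every $(v,\xi)$, that this $\limsup$ equals $\inf_{\psi\in W^{1,\infty}_0(Y;\RR^m)}\int_Y\f(x,v,\xi+\nabla\psi(z))\,dz$.

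For the upper bound I would test with a fixed $\psi\in W^{1,\infty}_0(Y;\RR^m)$: the values $(v+\rho\xi z+\rho\psi(z),\xi+\nabla\psi(z))$ stay in a bounded subset of $\RR^m\times\MM^{m\times d}$, so the $p$-growth~\eqref{p-growth-p-coercivity} furnishes an $L^1(Y)$ dominant uniformly in $\rho\in(0,1]$. Combining Scorza-Dragoni (a closed $K_\eta\subset\Omega$ with $\lambda(\Omega\setminus K_\eta)<\eta$ on which $\f|_{K_\eta\times\RR^m\times\MM^{m\times d}}$ is jointly continuous) with Carath\'eodory continuity in $(v,\xi)$, and choosing $x$ to be a density point of $K_\eta$, dominated convergence yields
\begin{align*}
\lim_{\rho\to 0}\int_Y\f(x+\rho z,v+\rho\xi z+\rho\psi(z),\xi+\nabla\psi(z))\,dz=\int_Y\f(x,v,\xi+\nabla\psi(z))\,dz.
\end{align*}
Letting $\eta\to 0$ and taking the infimum over $\psi\in W^{1,\infty}_0(Y;\RR^m)$ gives the upper bound.

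For the lower bound I would pick, for each small $\rho$, an almost optimal $\psi_\rho\in W^{1,p}_0(Y;\RR^m)$ realizing the inner infimum up to an error $\rho$. The $p$-coercivity in~\eqref{p-growth-p-coercivity} together with $\ftild_0(x,v,\xi)<\infty$ force a uniform bound on $\|\nabla\psi_\rho\|_{L^p(Y)}$, whence by Poincar\'e $\rho\psi_\rho\to 0$ in $L^p(Y)$ and, along a subsequence, pointwise a.e. The crucial step is to establish
\begin{align*}
\int_Y\bigl|\f(x+\rho z,v+\rho\xi z+\rho\psi_\rho(z),\xi+\nabla\psi_\rho(z))-\f(x,v,\xi+\nabla\psi_\rho(z))\bigr|\,dz\longrightarrow 0,
\end{align*}
after which the remaining integrand is bounded below by $\inf_{\psi\in W^{1,p}_0(Y;\RR^m)}\int_Y\f(x,v,\xi+\nabla\psi(z))\,dz$, which coincides with the $W^{1,\infty}_0$ infimum by the classical truncation argument~\cite[Theorem~9.8]{dacorogna08} valid under $p$-growth.

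The main obstacle will be this last displayed convergence: $\nabla\psi_\rho$ is only $L^p$-bounded, not $L^\infty$-bounded, so continuity of $\f$ in $\xi$ does not transfer uniformly in $\rho$. I plan to overcome this by combining $p$-growth equi-integrability (splitting the integral on the level sets $\{|\nabla\psi_\rho|>R\}$, whose measures are uniformly small in $\rho$) with Scorza-Dragoni continuity on a large compact in $x$ and a large ball in $(v,\xi)$. This is the technical heart of the proof and relies essentially on both the Carath\'eodory hypothesis and the two-sided $p$-growth.
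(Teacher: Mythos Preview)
Your strategy matches the paper's closely: the same change of variables to the unit cube, the same Scorza--Dragoni localization in $x$, and an essentially identical upper-bound argument with a fixed $W^{1,\infty}_0$ competitor. The paper organizes things slightly differently, introducing the intermediate quantity $\Lambda_\rho(x,v,\xi)$ (the same rescaled infimum, but over $\psi\in W^{1,\infty}_0(Y;\RR^m)$), first proving $\Lambda_\rho=\f_\rho$ for each fixed $\rho$ by strong $W^{1,p}$-density (your final appeal to~\cite[Theorem~9.8]{dacorogna08}, simply moved to the front), and only afterwards letting $\rho\to 0$ in $\Lambda_\rho$. This is a reshuffling, not a different idea.

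There is, however, a genuine gap in your lower-bound plan. You propose to control
\[
\int_{\{|\nabla\psi_\rho|>R\}}\bigl|\f(x+\rho z,\,v+\rho\xi z+\rho\psi_\rho,\,\xi+\nabla\psi_\rho)-\f(x,v,\xi+\nabla\psi_\rho)\bigr|\,dz
\]
by noting that $\lambda(\{|\nabla\psi_\rho|>R\})$ is uniformly small in $\rho$. But the $p$-growth bound on this integrand carries the term $|\nabla\psi_\rho|^p$, and coercivity gives only $\sup_\rho\|\nabla\psi_\rho\|_{L^p(Y)}<\infty$, \emph{not} equi-integrability of $\{|\nabla\psi_\rho|^p\}_\rho$; hence $\int_{\{|\nabla\psi_\rho|>R\}}|\nabla\psi_\rho|^p\,dz$ need not vanish uniformly in $\rho$ as $R\to\infty$. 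Uniformly small measure is insufficient when the dominant itself concentrates on that set. The paper handles precisely this point differently: on the Scorza--Dragoni set $K_\delta$ it invokes Eisen's convergence-in-measure lemma~\cite[p.~75]{eisen79}, obtaining
\[
\f(x+\rho_n z,\,v+\rho_n(\xi z+\psi_n),\,\xi+\nabla\psi_n)-\f(x,v,\xi+\nabla\psi_n)\to 0\quad\text{in measure on }K_\delta
\]
directly from the joint continuity of $\f$ on $K_\delta\times\RR^m\times\MM^{m\times d}$ and the convergence in measure of the first two slots, without ever isolating a large-gradient set; the passage to $L^1$ is then argued via Vitali. If you want to keep your organization and avoid Eisen, the natural repair is a Lipschitz-truncation step (replace $\psi_\rho$ by $\psi_\rho^R\in W^{1,\infty}_0$ coinciding with $\psi_\rho$ off a set of small measure and with $\|\nabla\psi_\rho^R\|_\infty\le CR$), which is in effect what the paper's separate proof of $\Lambda_\rho=\f_\rho$ accomplishes.
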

\begin{proof}For each $(x,v,\xi)\in\Omega\times\RR^m\times\MM^{m\times d}$ we denote by $Q^{\rm dac}\f(x,v,\xi)$ the right hand side of~\eqref{formula-eq-dac}. For each $\rho\in]0,1[$ we define $\Lambda_\rho, \f_\rho:\Omega\times\RR^m\times\MM^{m\times d}\to [0,\infty[$ by
\begin{align*}
\Lambda_\rho(x,v,\xi):=\inf\bigg\{\int_Y &\f(x+\rho y,v+\rho(\xi y+\psi(y)),\xi+\nabla\psi(y))dy:\\
&\psi\in W^{1,\infty}_0(Y;\RR^m)\bigg\};\\
\f_\rho(x,v,\xi):=\inf\bigg\{\int_Y &\f(x+\rho y,v+\rho(\xi y+\varphi(y)),\xi+\nabla\varphi(y))dy:\\
&\varphi\in W^{1,p}_0(Y;\RR^m)\bigg\}.
\end{align*}
It is easy to see, by a change of variables, that for a.a. $x\in\Omega$ and for every $(v,\xi)\in\RR^m\times\MM^{m\times d}$ we have 
\begin{align}\label{eq-dac0}
\limsup_{\rho\to 0}\f_\rho(x,v,\xi)=\ftild_0(x,v,\xi).
\end{align}
It is enough to show that for a.a. $x\in\Omega$, for every $(v,\xi)\in\RR^m\times\MM^{m\times d}$ and every $\rho\in]0,1[$ it hold
\begin{align}
Q^{\rm dac}\f(x,v,\xi)&=\lim_{\rho\to 0}\Lambda_\rho(x,v,\xi)\label{eq-dac1}\\
\Lambda_\rho(x,v,\xi)&= \f_\rho(x,v,\xi)\label{eq-dac2}.
\end{align}
Indeed, combining~\eqref{eq-dac0},~\eqref{eq-dac1} and~\eqref{eq-dac2} we obtain~\eqref{formula-eq-dac}.

\medskip

{\em Proof of~\eqref{eq-dac1}.} Let $\delta\ssup0$. By Scorza-Dragoni theorem, there exists a compact set $K_\delta\subset \overline{Y}$ such that $\lambda(Y\setminus K_\delta)\sinf\delta$ and $\f\lfloor_{K_\delta\times(\RR^m\times\MM^{m\times d})}$ is continuous. 
Fix $(x,v,\xi)\in\Omega\times\RR^m\times\MM^{m\times d}$ such that
\begin{align}\label{lebesgue-point-a}
a(x)=\lim_{\rho\to 0}\fint_{\Q_\rho(x)} a(y)dy=\lim_{\rho\to 0}\int_{Y} a(x+\rho y)dy\sinf\infty.
\end{align}

We show first that $\limsup_{\rho\to 0}\Lambda_\rho(x,v,\xi)\le Q^{\rm dac}\f(x,v,\xi)$. Note that
\begin{align*}
Q^{\rm dac}\f(x,v,\xi)\le \f(x,v,\xi)\le \beta(a(x)+\vert v\vert^p+\vert \xi\vert^p)\sinf\infty.
\end{align*}
Let $\eps\ssup 0$. There exists $\psi\in W^{1,\infty}_0(Y;\RR^m)$ such that
\begin{align}\label{eps-mini-dac-formula}
\int_Y \f(x,v,\xi+\nabla\psi(y))dy\le\eps+Q^{\rm dac}\f(x,v,\xi).
\end{align}

Fix $\rho\in]0,1[$. Set $g_{\rho}(y):=\f(x+\rho y,v+\rho(\xi y+\psi(y)),\xi+\nabla\psi(y))$ and $g_{0}(y):=\f(x,v,\xi+\nabla\psi(y))$ for all $y\in Y$. Using~\eqref{eps-mini-dac-formula} we have
\begin{align}\label{ineq-formula-dac-relation}
\Lambda_\rho(x,v,\xi)&\le \int_{K_\delta} g_\rho(y)dy+\int_{Y\setminus K_\delta} g_\rho(y)dy\\
&=\int_{K_\delta} g_\rho(y)-g_{0}(y)dy+\int_{Y\setminus K_\delta} g_\rho(y)-g_{0}(y)dy\notag\\
&+\int_Y g_{0}(y)dy\notag\\
&\le \int_{K_\delta} \left\vert g_\rho(y)-g_{0}(y)\right\vert dy+\int_{Y\setminus K_\delta} \left\vert g_\rho(y)-g_{0}(y)\right\vert dy\notag\\
&+\eps+Q^{\rm dac}\f(x,v,\xi).\notag
\end{align}
By using the $p$-growth~\eqref{p-growth-p-coercivity} it easy to see that there exists $C$ depending on $\beta$ and $p$ only such that
\begin{align}\label{domin-diff-dac}
\max\left\{g_0(y),g_\rho(y)\right\}\le C(a(x+\rho y)+\vert v\vert^p+\vert\xi\vert^p+\vert\psi(y)\vert^p+\vert\nabla\psi(y)\vert^p)\mbox{ a.e. in }Y.
\end{align}
By continuity of $\f\lfloor_{K_\delta\times(\RR^m\times\MM^{m\times d})}$ we have $g_\rho(y)-g_{0}(y)\to 0$ a.e. in $K_\delta$ as $\rho\to 0$. Using the domination~\eqref{domin-diff-dac} we obtain by applying the Lebesgue dominated convergence theorem
\begin{align}\label{ineq-formula-dac-relation1}
\lim_{\rho\to 0}\int_{K_\delta} \left\vert g_\rho(y)-g_{0}(y)\right\vert dy=0.
\end{align}
By~\eqref{domin-diff-dac} we have
\begin{align}\label{out-kd}
&\int_{Y\setminus K_\delta} \left\vert g_\rho(y)-g_{0}(y) \right\vert dy\\
&\le 2C(\int_{Y\setminus K_\delta} \!\!\!\!\!\!\!a(x+\rho y)dy+\delta(a(x)+\vert v\vert^p+\vert\xi\vert^p+\Vert\psi\Vert_\infty^p+\Vert\nabla\psi\Vert_\infty^p )).\notag
\end{align}
Note that $\{Y\ni y\mapsto a(x+\rho y)\}_{\rho\in]0,1[}$ is uniformly integrable since~\eqref{lebesgue-point-a}. So, taking the supremum over $\rho$ and passing to the limit $\delta\downarrow 0$ in~\eqref{out-kd} we find that
\begin{align}\label{ineq-formula-dac-relation2}
\lim_{\delta\downarrow0}\sup_{\rho\in]0,1[}\int_{Y\setminus K_\delta} \left\vert g_\rho(y)-g_{0}(y) \right\vert dy=0.
\end{align}
Taking~\eqref{ineq-formula-dac-relation1} and~\eqref{ineq-formula-dac-relation2} into account in~\eqref{ineq-formula-dac-relation} we find
\begin{align*}
\limsup_{\rho\to 0}\Lambda_\rho(x,v,\xi)\le\eps+Q^{\rm dac}\f(x,v,\xi).
\end{align*}

Now, we want to show that $\liminf_{\rho\to 0}\Lambda_\rho(x,v,\xi)\ge Q^{\rm dac}\f(x,v,\xi)$. Consider a sequence $\{\rho_n\}_{n\in\NN}\subset ]0,1[$ such that 
\begin{align*}
\liminf_{\rho\to 0}\Lambda_\rho(x,v,\xi)=\lim_{n\to\infty}\Lambda_{\rho_n}(x,v,\xi)\le  \beta(a(x)+\vert v\vert^p+\vert \xi\vert^p)\sinf\infty
\end{align*}
since $p$-growth conditions~\eqref{p-growth-p-coercivity}. Fix $n\in \NN$. We can choose $\psi_n\in W^{1,\infty}_0(Y;\RR^m)$ such that
\begin{align*}
\int_Y g_n(y)dy\le \rho_n+\Lambda_{\rho_n}(x,v,\xi)
\end{align*}
where $g_{n}(y):=\f(x+\rho_n y,v+\rho_n(\xi y+\psi_{\rho_n}(y)),\xi+\nabla\psi_{\rho_n}(y))$ for all $y\in Y$. Since $p$-coercivity, we can choose a subsequence (not relabelled) such that 
\begin{align}
&\psi_{n}\to\psi_\infty\mbox{ in }L^p(Y;\RR^m)\label{strong-convergence-dac};\\
&\nabla\psi_n\wto\nabla\psi_\infty\mbox{ in }L^p(Y;\MM^{m\times d}).\label{weak-convergence-dac}
\end{align}
Fix $\delta\ssup0$ and choose a compact set $K_\delta\subset \overline{Y}$ such that $\lambda(Y\setminus K_\delta)\sinf\delta$ and $\f\lfloor_{K_\delta\times(\RR^m\times\MM^{m\times d})}$ is continuous. We have by Eisen convergence theorem \cite[p. 75]{eisen79} that
\begin{align}\label{eisen-convergence}
g_n(y)-\f(x,v,\xi+\nabla\psi_n(y))\to 0\mbox{ in measure in }K_\delta.
\end{align}
We have 
\begin{align*}
\int_Y g_n(y)dy\ge &\int_{K_\delta} g_n(y)-\f(x,v,\xi+\nabla\psi_n(y))dy\\
&+\int_{Y\setminus K_\delta} g_n(y)-\f(x,v,\xi+\nabla\psi_n(y))dy+Q^{\rm dac}\f(x,v,\xi).
\end{align*}
Using growth conditions we have for a.a. $y\in Y$
\begin{align}\label{out-kd1}
&\left\vert g_n(y)-\f(x,v,\xi+\nabla\psi_n(y)) \right\vert\\
& \le 2C(a(x+\rho_n y)+a(x)+\vert v\vert^p+\vert\xi\vert^p+\vert\psi_n(y)\vert^p+\vert\nabla\psi_n(y)\vert^p).\notag
\end{align}
By taking~\eqref{eisen-convergence},~\eqref{out-kd1},~\eqref{strong-convergence-dac} and ~\eqref{weak-convergence-dac} into account we have
\begin{align*}
\lim_{n\to\infty} \int_{K_\delta} \left\vert g_n(y)-\f(x,v,\xi+\nabla\psi_n(y))\right\vert dy=0
\end{align*}
since Vitali convergence theorem. Using~\eqref{out-kd1} and reasoning similarly as in the first part of the proof we have
\begin{align*}
\lim_{\delta\to0}\sup_{n\in\NN} \int_{Y\setminus K_\delta} \left\vert g_n(y)-\f(x,v,\xi+\nabla\psi_n(y))\right\vert dy=0.
\end{align*}
It follows that 
\begin{align*}
\liminf_{\rho\to 0} \Lambda_{\rho}(x,v,\xi)=\liminf_{n\to\infty} \Lambda_{\rho_n}(x,v,\xi)\ge \liminf_{n\to\infty}\int_Y g_n(y)dy\ge Q^{\rm dac}\f(x,v,\xi).
\end{align*}

\medskip

{\em Proof of~\eqref{eq-dac2}.} Fix $(x,v,\xi)\in\Omega\times\RR^m\times\MM^{m\times d}$ and $\rho\in ]0,1[$. We only need to prove that
\begin{align}\label{eq-dac2-real}
\f_\rho(x,v,\xi)\ge \Lambda_{\rho}(x,v,\xi).
\end{align}
Let $\eps\ssup0$. There exists $\varphi_\eps\in W^{1,p}_0(Y;\RR^m)$ such that
\begin{align*}
\f_\rho(x,v,\xi)+\eps\ge \int_Y \f(x+\rho y,v+\rho(\xi y+\varphi_{\eps}(y)),\xi+\nabla\varphi_{\eps}(y))dy.
\end{align*}
There exists a sequence $\{\psi_n\}_{n\in\NN}\subset W^{1,\infty}_0(Y;\RR^m)$ such that $\psi_n\to \varphi_\eps$ in $W^{1,p}(Y;\RR^m)$, $\psi_n\to \varphi_\eps$ a.e. in $Y$ and $\nabla\psi_n\to \nabla\varphi_\eps$ a.e. in $Y$ as $n\to\infty$. Using growth conditions we have for some $C$ depending on $\beta$ and $p$ only, for a.a. $y\in Y$ and for all $n\in\NN$
\begin{align*}
&\f(x+\rho y,v+\rho(\xi y+\psi_{n}(y)),\xi+\nabla\psi_{n}(y))\\
&\le C(a(x+\rho y)+\vert v\vert^p+\vert\xi\vert^p+\vert\psi_n(y)\vert^p+\vert\nabla\psi_n(y)\vert^p).
\end{align*}
Since $\f$ is Carath\'eodory we have
\begin{align*}
\lim_{n\to\infty}&\f(x+\rho y,v+\rho(\xi y+\psi_{n}(y)),\xi+\nabla\psi_{n}(y))\\&=\f(x+\rho y,v+\rho(\xi y+\varphi_{\eps}(y)),\xi+\nabla\varphi_{\eps}(y))\quad\mbox{ a.e. in }Y.
\end{align*}
Applying Vitali convergence theorem we obtain
\begin{align*}
\Lambda_\rho(x,v,\xi)&\le \lim_{n\to\infty}\int_Y\f(x+\rho y,v+\rho(\xi y+\psi_{n}(y)),\xi+\nabla\psi_{n}(y))dy\\
&=\int_Y \f(x+\rho y,v+\rho(\xi y+\varphi_{\eps}(y)),\xi+\nabla\varphi_{\eps}(y))dy\\
&\le \f_\rho(x,v,\xi)+\eps.
\end{align*}
Letting $\eps\to 0$ we finally obtain~\eqref{eq-dac2-real}.\mbox{\qedhere}
\end{proof}
\section{Appendix}
\subsection{Usage of Vitali covering theorem} Let $A\subset O\in\O(\Omega)$ be a set which is not necessarily measurable. For each $x\in A$ we consider a family of closed balls $\K_x$ containing $x$ of $O$ satisfying $\inf\left\{\diam(\Q):\Q\in {\K_x}\right\}=0$ and $A\subset\mathop{\cup}_{\Q\in\K}\Q$ with $\K:=\mathop{\cup}_{x\in A}\K_x$. We say that $\K$ is a fine cover of $A$.

Then there exists a countable pairwise disjointed family of balls $\left\{\overline{\Q}_i\right\}_{i\ge 1}\subset {\K}$ such that
\begin{align*}
\lambda(A\setminus {\mathop{\cup}_{i=1}^{\infty}{{\Q}_i}})=0.
\end{align*}
It follows that for any $\mu\in \mesabs(O)$, i.e. $\mu\ll\lambda{\lfloor_O}$, we have $\mu(A\setminus {\mathop{\cup}_{i\ge 1}{{\Q}_i}})=0$. Moreover, if $\lambda(A)\sinf\infty$ then for any $\delta\ssup0$ we can choose a finite subfamily $\left\{\overline{\Q}_i\right\}_{i= 1}^{N}\subset {\K}$ satisfying
\begin{align*}
\mu(A\setminus {\mathop{\cup}_{i= 1}^N{{\Q}_i}})\sinf\delta.
\end{align*}

\subsection{Level sets of derivative of set functions} Let $G:\Bal_o(\Omega)\to ]-\infty,\infty]$ be a set function. Let $O\in\O(\Omega)$. For each $h\in\RR$ we consider the strict sublevel (resp. superlevel) of the lower (resp. upper) derivative of $G$
\begin{align*}
S_h:=\left\{x\in O:\Di_\lambda G(x)\sinf h\right\}\quad (\mbox{resp. }S^h:=\left\{x\in O:\Ds_\lambda G(x)\ssup h\right\})
\end{align*}

The following lemma give consequences of sublevel (resp. superlevel) sets of derivative of set functions. 
\begin{lemma}\label{consequenceSh-0} Let $h\in\RR$ and $\eta\ssup0$. Then 
\begin{hyp2}
\item\label{consequenceSh-1} there exists a {\em countable} pairwise disjointed family $\{\Q_i\}_{i\in I}\subset\Bal_o(O)$ such that
\begin{align}\label{consequenceSh}
\lambda(S_h\setminus \mathop{\cup}_{i\in I}\Q_i)=0,\quad \forall i\in I\quad G(\Q_i)\sinf h\lambda(\Q_i)\mbox{ and }\;\diam(\Q_i)\in ]0,\eta[
\end{align}
(resp. $\displaystyle
\lambda(S^h\setminus  \mathop{\cup}_{i\in I}\Q_i)=0,\quad \forall i\in I\quad G(\Q_i)\ssup h\lambda(\Q_i)\mbox{ and }\;\diam(\Q_i)\in ]0,\eta[
$);\\

\item\label{consequenceSh-2} for every $\delta\ssup0$ there exists a {\em finite} pairwise disjointed family $\{\Q_i\}_{i\in I}\subset\Bal_o(O)$ such that
\begin{align*}
\lambda(S_h\setminus  \mathop{\cup}_{i\in I}\Q_i)\sinf\delta,\quad \forall i\in I\quad G(\Q_i)\sinf h\lambda(\Q_i)\mbox{ and }\;\diam(\Q_i)\in ]0,\eta[
\end{align*}
(resp. $\displaystyle
\lambda(S^h\setminus  \mathop{\cup}_{i\in I}\Q_i)\sinf\delta,\quad \forall i\in I\quad G(\Q_i)\ssup h\lambda(\Q_i)\mbox{ and }\;\diam(\Q_i)\in ]0,\eta[
$).
\end{hyp2}
\end{lemma}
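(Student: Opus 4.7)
The plan is to deduce the lemma from the Vitali covering theorem recorded in the preceding subsection of this Appendix, by observing that the pointwise conditions defining $S_h$ (resp.\ $S^h$) supply, at each point, cubes of arbitrarily small diameter on which $G$ satisfies the desired strict inequality; these cubes organize into a Vitali fine cover and the quoted theorem then does all the work.

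First I would rewrite the derivatives monotonically in $\rho$. As $\rho$ decreases, the collection of admissible cubes shrinks, so the $\rho$-indexed infimum in the definition of $\underline{D}_\lambda G$ is nondecreasing and the corresponding supremum in the definition of $\overline{D}_\lambda G$ is nonincreasing; hence
$\underline{D}_\lambda G(x)=\sup_{\rho>0}\inf\{G(\Q)/\lambda(\Q):x\in \Q\in\Bal_o(O),\ \diam \Q\le\rho\}$
and analogously $\overline{D}_\lambda G(x)=\inf_{\rho>0}\sup\{\cdots\}$. Consequently, $x\in S_h$ is equivalent to: for every $\rho\in{]0,\eta[}$ there exists $\Q\in\Bal_o(O)$ with $x\in \Q$, $\diam \Q\le\rho$ and $G(\Q)<h\lambda(\Q)$; symmetrically for $S^h$ with the inequality reversed.

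Next, for each $x\in S_h$ I set $\K_x:=\{\overline{\Q}:x\in \Q\in\Bal_o(O),\ \diam \Q\in{]0,\eta[},\ G(\Q)<h\lambda(\Q)\}$ and let $\K:=\bigcup_{x\in S_h}\K_x$. By the previous step $\inf\{\diam \Q:\overline{\Q}\in\K_x\}=0$ for every $x\in S_h$, so $\K$ is a fine cover of $S_h$ in exactly the sense of the first subsection of the Appendix. Applying the Vitali covering theorem recalled there produces a countable pairwise disjoint subfamily $\{\overline{\Q}_i\}_{i\in I}\subset\K$ with $\lambda(S_h\setminus\bigcup_{i\in I}\Q_i)=0$, and each $\Q_i$ satisfies $\diam \Q_i\in{]0,\eta[}$ and $G(\Q_i)<h\lambda(\Q_i)$ by construction; this is \ref{consequenceSh-1}. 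For \ref{consequenceSh-2}, since $\lambda(S_h)\le\lambda(\Omega)<\infty$, the finite-extraction half of the same Vitali statement (applied with $\mu=\lambda$) yields, for any $\delta>0$, a finite $\{\overline{\Q}_i\}_{i=1}^{N}\subset\K$ with $\lambda(S_h\setminus\bigcup_{i=1}^{N}\Q_i)<\delta$. The statements for $S^h$ are proved identically, using the dual characterization of $\overline{D}_\lambda G$ from the first step and the opposite strict inequality in the definition of $\K_x$.

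There is no real obstacle here: once the monotonicity of the $\rho$-indexed infimum and supremum is made explicit, every ingredient needed has been placed in the Appendix precisely for this use. The only mild point of care is to verify that the cubes produced do lie in $\Bal_o(O)$ (so that $\overline{\Q}\subset O$), which is built into the definition of $\K_x$, and that the countable/finite extraction is the one provided by the two halves of the Vitali statement recalled above.
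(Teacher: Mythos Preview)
Your proposal is correct and follows essentially the same approach as the paper: both arguments use the monotone rewriting of $\Di_\lambda G$ (resp.\ $\Ds_\lambda G$) to produce, at each point of $S_h$ (resp.\ $S^h$), cubes of arbitrarily small diameter satisfying the strict inequality, assemble these into a Vitali fine cover, and then invoke the Vitali covering theorem recalled in the Appendix. The only cosmetic differences are that the paper introduces an auxiliary $\eps>0$ and selects one cube per scale $\rho$, whereas you take the full family of admissible cubes; and the paper derives~\ref{consequenceSh-2} from~\ref{consequenceSh-1} rather than citing the finite-extraction clause directly.
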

\begin{proof} Let $h\in\RR$ and $\eta\ssup0$. We only give the proof for $S_h$, since similar arguments apply for $S^h$. Note that~\ref{consequenceSh-2} is a direct consequence of~\ref{consequenceSh-1}, so, we only show~\ref{consequenceSh-1}. 

If $x\in S_h$ then for some $\eps\ssup0$
\begin{align*}
\forall \rho\in ]0,\eta[\qquad\inf\left\{\frac{G(\Q)}{\lambda(\Q)}: \Q \in \mathfrak{B}_{x,\rho}(O)\right\}\sinf h-\eps
\end{align*}
where $\mathfrak{B}_{x,\rho}(O):=\left\{\Q:x\in\Q\in\Bal_o(O)\mbox{ and }\diam(\Q)\le\rho\right\}$. For each $\rho\in ]0,\eta[$ there exists $\Q_{x,\rho}\in \mathfrak{B}_{x,\rho}(O)$ such that
\begin{align}\label{equation-practical-vitali}
\frac{G(\Q_{x,\rho})}{\lambda(\Q_{x,\rho})}-\eps\le \inf\left\{\frac{G(\Q)}{\lambda(\Q)}: \Q \in \mathfrak{B}_{x,\rho}(O)\right\}\sinf h-\eps.
\end{align}
Consider the family $\K_\eta:=\left\{\overline{\Q_{x,\rho}}\right\}_{x\in S_h,\rho\in ]0,\eta[}$ of closed cubes such that~\eqref{equation-practical-vitali} holds. The family $\K_\eta$ is a fine cover of $S_h$, i.e.,  
\begin{align*}
S_h\subset \mathop{\cup}_{\Q\in\K_\eta}\Q\quad\mbox{ and }\quad\forall x\in S_h\quad\inf\left\{\diam(\Q):\Q\in \K_{\eta,x}\right\}=0
\end{align*}
where $\K_{\eta,x}:=\left\{\overline{\Q_{x,\rho}}\right\}_{\rho\in ]0,\eta[}\subset\K_\eta$. By Vitali covering theorem we conclude~\eqref{consequenceSh}.\mbox{\qedhere}
\end{proof}

\subsection{Proof of Lemma~\ref{measurability}}\label{proof-measurability} Fix $c\in \RR$. We have to prove that 
\begin{align*}
M_c:=\left\{x\in O: \Di_\lambda G(x)\le c\right\}
\end{align*}
is measurable. Fix $\eta\ssup0$. Set $h:=c+\eta$. By Lemma~\ref{consequenceSh-0}~\ref{consequenceSh-1} there exists a countable pairwise disjointed family $\{\Q_i\}_{i\in I}\subset\Bal_o(O)$ such that
\begin{align*}
\lambda(S_h\setminus  \mathop{\cup}_{i\in I}\Q_i)=0,\quad \forall i\in I\quad G(\Q_i)\sinf h\lambda(\Q_i)\mbox{ and }\;\diam(\Q_i)\in ]0,\eta[.
\end{align*}
Since $S_h\supset M_c$ we have
\begin{align*}
\lambda(M_c\setminus  \mathop{\cup}_{i\in I}\Q_i)=0.
\end{align*}
If we show that the Borel set $\Q^\infty:=\mathop{\cup}_{i\in I}{\Q}_i\subset M_c$ then $M_c$ will be the reunion of a Borel set and a $\lambda$-negligible set and so measurable since $\lambda$ is complete. Let $z\in \Q^\infty$. Then there exists $i_z\in I$ such that $z\in {\Q}_{i_z}$. It follows that 
\begin{align*}
\inf\left\{\frac{G(\Q)}{\lambda(\Q)}:z\in\Q\in\Bal_o(\Omega),\;\diam(\Q)\le \eta\right\}\le \frac{G(\Q_{i_z})}{\lambda(\Q_{i_z})}\le  c+\eta.
\end{align*}
Passing to the limit $\eta\to0$ we obtain $\Di_\lambda G(z)\le c$ which means that $z\in M_c$. The proof is complete.\quad\mbox{\qedsymbol}

\subsection{Properties of the family of set functions $\{\M_\eps(u;\cdot)\}_\eps$}
\begin{lemma}\label{lemma-sub-sup-m} Let $(u,O)\in W^{1,p}(\Omega;\RR^m)\times \O(\Omega)$. Then the family $\{\M_\eps(u;\cdot)\}_\eps,\;\M_\eps(u;\cdot):\O(O)\to [0,\infty]$ satisfies
\begin{hyp2}


\item\label{additive} for every $\eps\ssup0$ and every $(U,V)\in\O(O)\times\O(O)$ 
\[
U\cap V=\emptyset\implies \M_\eps(u;U\cup V)\le \M_\eps(u;U)+\M_\eps(u;V);
\]

\medskip

\item\label{implication-abs} for every $\eps\ssup 0$, every $U,V\in\O(O)$ with $U\subset V$ 
\[
\lambda(V\ssetminus U)=0\implies \M_\eps(u;U)=\M_\eps(u;V);
\]

\item\label{additivity-m}in particular, for every $U\in\O(O)$ and $V\in\O(O)$ satisfying $U\subset V$ we have for every $\eps\ssup0$
\[
\lambda(\partial U)=0\implies \M_\eps(u;V)\le\M_\eps(u;U)+\M_\eps(u;V\setminus \overline{U}).
\]
\end{hyp2}
\end{lemma}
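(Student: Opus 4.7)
The plan is to prove (i) by a standard variational gluing, obtain (ii) as a consequence of the absolute continuity in \ref{C1}, and then derive (iii) by writing $V$ as an essentially disjoint union of $U$ and $V\setminus\overline U$.

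For (i), given $\delta>0$ I would choose $v_U\in u+W^{1,p}_0(U;\RR^m)$ and $v_V\in u+W^{1,p}_0(V;\RR^m)$ with energies within $\delta/2$ of $\M_\eps(u;U)$ and $\M_\eps(u;V)$. Because $U\cap V=\emptyset$, the Adams--Hedberg characterization recalled after the definition of $\M_\eps$ shows that the pasted function $w:=v_U\mathds{1}_U+v_V\mathds{1}_V+u\mathds{1}_{\Omega\setminus(U\cup V)}$ still belongs to $u+W^{1,p}_0(U\cup V;\RR^m)$, since $(v_U-u)$ and $(v_V-u)$, extended by zero, sum to an element of $W^{1,p}_0(U\cup V;\RR^m)$. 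Locality \ref{C2} then gives $F_\eps(w;U)=F_\eps(v_U;U)$ and $F_\eps(w;V)=F_\eps(v_V;V)$, while additivity of the Borel measure from \ref{C1} on the disjoint pair $U,V$ gives $F_\eps(w;U\cup V)=F_\eps(w;U)+F_\eps(w;V)$. Letting $\delta\to 0$ yields the subadditivity claimed in (i).

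For (ii), with $U\subset V$ and $\lambda(V\setminus U)=0$, the affine classes $u+W^{1,p}_0(U;\RR^m)$ and $u+W^{1,p}_0(V;\RR^m)$ coincide: Adams--Hedberg identifies them as the functions equal to $u$ on $\Omega\setminus U$ and $\Omega\setminus V$ respectively, and these complements differ only by a Lebesgue-null set. For any such competitor $w$, \ref{C1} provides a Borel measure $\mu_w\ll\lambda$ with $F_\eps(w;\cdot)=\mu_w(\cdot)$ on $\O(\Omega)$; absolute continuity then forces $\mu_w(V\setminus U)=0$, so $F_\eps(w;V)=\mu_w(V)=\mu_w(U)=F_\eps(w;U)$. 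Taking infima in both formulas yields $\M_\eps(u;U)=\M_\eps(u;V)$.

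For (iii), I would set $W:=U\cup(V\setminus\overline U)$, an open subset of $V$ whose complement in $V$ equals $V\cap\partial U$, since the only points of $V$ failing to lie in $W$ are those in $\overline U\setminus U=\partial U$ (using that $U$ is open). By hypothesis $\lambda(\partial U)=0$, so part (ii) gives $\M_\eps(u;V)=\M_\eps(u;W)$, while part (i) applied to the disjoint pair $U,\,V\setminus\overline U$ yields $\M_\eps(u;W)\le\M_\eps(u;U)+\M_\eps(u;V\setminus\overline U)$. The only genuine subtlety I anticipate is the functional-analytic step in (ii): the equality of the two affine competitor classes depends on the Sobolev zero-trace description being insensitive to Lebesgue-null modifications, but this is precisely the content of the Adams--Hedberg reference already invoked in the paper, so everything else reduces to bookkeeping with Borel measures and locality.
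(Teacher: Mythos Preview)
Your proof is correct and follows essentially the same approach as the paper: gluing competitors via \ref{C1} and \ref{C2} for (i), identifying the two competitor classes and invoking absolute continuity from \ref{C1} for (ii), and combining (i) and (ii) through the decomposition $V\setminus\bigl(U\cup(V\setminus\overline U)\bigr)=V\cap\partial U$ for (iii). The only cosmetic difference is that the paper presents (ii) as two separate inequalities rather than a single identification of the affine classes, but the content is identical.
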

\begin{proof} We recall that for $A\in\O(\Omega)$ we have
\begin{align*}
W^{1,p}_0(A;\RR^m)=\left\{u\in W^{1,p}(\Omega;\RR^m):u=0\mbox{ in }\Omega\setminus A\right\}.
\end{align*}

If $U,V\in\O(O)$ satisfy $U\cap V=\emptyset$ then for every $\varphi_i\in L^p(\Omega;\RR^m)$ with $i\in\{0,1,2\}$ we have
\begin{align*}
&\varphi_1\in W^{1,p}_0(U;\RR^m)\mbox{ and }\varphi_2\in W^{1,p}_0(V;\RR^m)\Longrightarrow \varphi_1\mathds{1}_U+\varphi_2\mathds{1}_V\in W^{1,p}_0(U\cup V;\RR^m).
\end{align*}

\medskip

Let $\eps\ssup0$. To verify~\ref{additive} it suffices to write for every $\varphi_1\in W^{1,p}_0(U;\RR^m)\mbox{ and }\varphi_2\in W^{1,p}_0(V;\RR^m)$
\begin{align*}
F_\eps(u+\varphi_1;U)+F_\eps(u+\varphi_2;V)&= F_\eps(u+\varphi_1\mathds{1}_U+\varphi_2\mathds{1}_V;U\cup V)\\
 &\ge \M_\eps(u;U\cup V),
\end{align*}
taking the infimum over $\varphi_1$ and $\varphi_2$ we obtain
\begin{align*}
\M_\eps(u;U)+\M_\eps(u;V)\ge \M_\eps(u;U\cup V).
\end{align*}

\medskip

Consider $U,V\in\O(O)$ satisfying $U\subset V$ and $\lambda(V\setminus U)=0$. Since $U\subset V$ we have $W^{1,p}_0(U;\RR^m)\subset W^{1,p}_0(V;\RR^m)$, thus $\M_\eps(u;U)\ge \M_\eps(u;V)$. Assume that $\M_\eps(u;V)<\infty$. For every $\eta\ssup0$ there exists $\varphi\in W^{1,p}_0(V;\RR^m)$ such that $\infty\ssup\M_\eps(u;V)+\eta\ge F_\eps(u+\varphi;V)$. By using \ref{C1} we have
\begin{align*}
\M_\eps(u;V)+\eta&\ge F_\eps(u+\varphi;V)=F_\eps(u+\varphi\mathds{1}_U;U)+F_\eps(u+\varphi;V\setminus U)\\
&\ge \M_\eps(u;U).
\end{align*}
Note that $\varphi\mathds{1}_U=\varphi$ a.e. in $V$ and so $\varphi\mathds{1}_U\in W^{1,p}_0(U;\RR^m)$. Therefore~\ref{implication-abs} is satisfied. 

\medskip

To prove~\ref{additivity-m} it is sufficient to use the properties~\ref{implication-abs},~\ref{additive} together with the fact that we can write
$V\setminus(U\cup(V\setminus\overline{U}))=\partial U$ for all $U,V\in\O(O)$ satisfying $U\subset V$.\mbox{\qedhere}\end{proof}

\end{document}